\def\p{{\mathbf p}}
\def\q{{\mathbf q}}
\def\z{{\mathbf z}}
\def\Z{{\mathbb Z}}
\def\R{\mathbb {R}}
\def\wsat{{\text{wsat}}}
\def\vspan{{\text{span}}}
\newcommand{\thickhline}{%
    \noalign {\ifnum 0=`}\fi \hrule height 1pt
    \futurelet \reserved@a \@xhline
}
\newcolumntype{"}{@{\hskip\tabcolsep\vrule width 1pt\hskip\tabcolsep}}
\newtheoremstyle{mystyle}
{1.2em} 
{0.5em} 
    {\itshape} 
    {} 
    {\bfseries} 
    {}%character between "Theorem 1.1" and the actual statement 
    {.5em} 
    {} 
\newtheoremstyle{mystyle2}
{1.2em} 
{0.5em} 
    {} 
    {} 
    {\bfseries} 
    {}%character between "Theorem 1.1" and the actual statement 
    {.5em} 
    {} 
\theoremstyle{mystyle}
\newtheorem{thm}{Theorem}[section]
\theoremstyle{mystyle}
\newtheorem{prop}[thm]{Proposition}
\theoremstyle{mystyle}
\newtheorem{lemma}[thm]{Lemma}
\theoremstyle{mystyle}
\newtheorem{qn}[thm]{Question}
\theoremstyle{mystyle}
\theoremstyle{mystyle}
\theoremstyle{mystyle}
\newtheorem{cor}[thm]{Corollary}
\theoremstyle{mystyle2}
\newtheorem{defn}[thm]{Definition}
\theoremstyle{mystyle2}
\theoremstyle{mystyle2}
\newtheorem{claim}[thm]{Claim}
\theoremstyle{mystyle}
\newlist{mycases}{enumerate}{1}
\setlist[mycases,1]{label=Case~\arabic*:,leftmargin=\labelwidth,align=left, labelindent=0pt, listparindent=\parindent, labelwidth=0pt, itemindent=!, itemsep=3mm}
\newlist{mysubcases}{enumerate}{1}
\setlist[mysubcases,1]{label=Case~\Alph*:,leftmargin=\labelwidth,align=left, labelindent=0pt, listparindent=\parindent, labelwidth=0pt, itemindent=!, itemsep=3mm}
\definecolor{lgray}{gray}{0.95}
\definecolor{mgray}{gray}{0.40}
\tikzstyle{std}=[ circle, draw=black,fill=black, inner sep=0pt, minimum size=2mm]
\tikzstyle{blank}=[ circle, draw=none,fill=none, inner sep=0pt, minimum size=2mm]
\tikzstyle{bred}=[circle, draw=black,fill=red,thick,  inner sep=2pt, minimum size=2mm]
\tikzstyle{bgreen}=[ circle, draw=black,fill=green,thick,  inner sep=2pt, minimum size=2.5mm]
\tikzstyle{sqRed}=[rectangle, draw=black,fill=red,thick,  inner sep=2pt, minimum size=2.5mm]
\tikzstyle{trEdge}=[color=black, line width = 1pt, style=dotted]
\tikzstyle{circ}=[circle, draw=black,fill=white, inner sep=2pt, minimum size=22.5mm]
\tikzstyle{circl}=[circle, draw=black,fill=white, inner sep=2pt, minimum size=15mm]
\tikzstyle{gcirc}=[circle, draw=gray,fill=white, inner sep=2pt, minimum size=15mm]
\tikzstyle{eblue}=[color=blue, line width = 1pt]
\tikzstyle{ered}=[color=red, line width = 1pt]
\tikzstyle{egreen}=[color=green, line width = 1pt]
\tikzstyle{eorange}=[color=orange, line width = 1pt]
\tikzstyle{eyellow}=[color=yellow, line width = 1pt]
\tikzstyle{epurple}=[color=violet, line width = 1pt]
\tikzstyle{ecyan}=[color=cyan, line width = 1pt]
\tikzstyle{eblack}=[color=black, line width = 1pt]
\tikzstyle{epink}=[color=magenta, line width = 1pt]
\tikzstyle{edgreen}=[color=cadmiumgreen, line width = 1pt]
\begin{document}

\title{\textbf{Using polynomials to find lower bounds for $r$-bond bootstrap percolation}}
\date{\today}
\author{Natasha Morrison \footnotemark[1]\thanks{Department of Mathematics and Statistics, University of Victoria.} \thanks{Research supported by NSERC Discovery Grant RGPIN-2021-02511 and NSERC Early Career Supplement DGECR-2021-00047. Email: \href{mailto:nmorrison@uvic.ca}{nmorrison@uvic.ca}.} 
 \and Shannon Ogden\footnotemark[1] \thanks{Supported by NSERC CGS M. Email: \href{mailto:sogden@uvic.ca}{sogden@uvic.ca}.}}
\maketitle

\begin{abstract}
    The $r$\emph{-bond bootstrap percolation} process on a graph $G$ begins with a set $S$ of \emph{infected} edges of $G$ (all other edges are \emph{healthy}). At each step, a healthy edge becomes infected if at least one of its endpoints is incident with at least $r$ infected edges (and it remains infected). If $S$ eventually infects all of $E(G)$, we say $S$ \emph{percolates}. In this paper we provide recursive formulae for the minimum size of percolating sets in several large families of graphs. We utilise an algebraic method introduced by Hambardzumyan, Hatami, and Qian, and substantially extend and generalise their work.
\end{abstract}

\section{Introduction}
\label{Sec:wsat poly intro}

Bootstrap processes are well-studied examples of cellular automata in graphs. One of the most commonly studied models is the $r$-neighbour bootstrap process, which was originally introduced by physicists to model the dynamics of ferromagnetism \cite{CLR}. Since then, both the $r$-neighbour process and variants have found many exciting applications in the study of physical phenomena (see, e.g.~\cite{AL}), and other fields as diverse as virology, neurology, psychology, and finance.

In this paper, we study the $r$\emph{-bond bootstrap percolation} process, introduced in~\cite{LZ},  on a graph $G$. The process begins with a set of initially \emph{infected} edges of $G$, and we consider all other edges to be \emph{healthy}. At each subsequent step, a healthy edge becomes infected if at least one of its endpoints is incident with at least $r$ infected edges. Once an edge is infected, it remains infected indefinitely. If a set of initially infected edges will eventually infect all of $E(G)$, we say this set \emph{percolates}, and refer to it as an $r$\emph{-percolating set} of $G$. The $r$-bond boostrap percolation process is a particular instance of a very well-studied process called \emph{weak saturation}, which was introduced by Bollob\'{a}s~\cite{Bela} in 1968. See the survey~\cite{survey} for a summary of what is known about this problem. 

Define $m_e(G,r)$ to be the minimum number of edges in an $r$-percolating set in a graph $G$.
Recently, Hambardzumyan, Hatami, and Qian~\cite{HHQ} introduced a new polynomial method (see \thref{HHQ ineq}, below) for determining lower bounds on $m_e(G,r)$. They applied this method to provide bounds when $G = F\square H$, where $F\square H$ denotes the \emph{Cartesian product}\footnote{Recall that $F\square H$ is the graph with vertex set $V(F)\times V(H)$, where two vertices $(u_1,v_1)$ and $(u_2,v_2)$ are adjacent if and only if either $u_1=u_2$ and $v_1v_2\in E(H)$, or $v_1=v_2$ and $u_1u_2\in E(F)$.} of two graphs $F$ and $H$. In what follows, we refer to a Cartesian product of two graphs as simply a \emph{product}. For ease of notation, given graphs $G_1,...,G_k$, we write the product $G_1\square\cdots\square G_k$ as $\prod_{i=1}^k G_i$. 

Hambardzumyan, Hatami, and Qian~\cite{HHQ} used their method to determine $m_e(G,r)$ when $G$ is a $d$-dimensional torus, thus resolving a question of Morrison and Noel~\cite{MN18}. They also proved an analogous result, providing a recursive formula for $m_e(G,r)$ when $G$ is a product of paths (see Theorem 9 in \cite{HHQ}), which gave an alternate simpler proof of a result of Morrison and Noel~\cite{MN18}. In this paper, we push the power of this polynomial method further in order to provide recursive formulas for $m_e(G,r)$ for various families of graphs, hence extending and generalising several results of~\cite{HHQ}. 

As some of our results are fairly technical to state, we present them in designated sections below. In particular, we provide a general recursive formula for $m_e(G\square T,r)$ when $T$ is a tree and the minimum degree of $G$ is sufficiently large compared to $r$ (see \thref{trees exact} below). \thref{trees exact} is a substantial generalisation of Theorem 9 of \cite{HHQ}. 

When $T$ is a star, a more careful analysis allows the minimum degree condition on $G$ to be dropped, giving a recursive formula for $m_e(G\square S_k,r)$ (see \thref{Gen Stars exact} below). Recall that $S_k$ is the star with $k$ leaves. A particular consequence of \thref{Gen Stars exact} is the following recursive formula for $m_e(G,r)$, where $G$ is a product of stars. 

\begin{restatable}{thm}{StarsExact}
\thlabel{Stars exact}
Let $k,r,a_1,...,a_k \in \Z^+$. Let $G_i = S_{a_1}\square \cdots \square S_{a_i}$ for each $i\in [k]$, and let $G_0=K_1$. For $0\leq i\leq \Delta(G_{k-1})$, let $d_i$ denote the number of vertices in $G_{k-1}$ of degree $i$. Then
\[m_e(G_k,r) = m_e(G_{k-1},r) + a_k m_e(G_{k-1},r-1) + \sum\limits_{t=1}^{a_k-1} t d_{r-t} + a_k \sum\limits_{t=a_k}^{r} d_{r-t}.\]
\end{restatable}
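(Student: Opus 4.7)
The plan is to derive \thref{Stars exact} as a direct specialisation of \thref{Gen Stars exact}. By definition $G_k = G_{k-1} \square S_{a_k}$, so \thref{Gen Stars exact} applies to it with the graph $G$ taken to be $G_{k-1}$ and the star taken to be $S_{a_k}$. This immediately yields an expression for $m_e(G_k,r)$ in terms of $m_e(G_{k-1},r)$, $m_e(G_{k-1},r-1)$, and a correction term that depends only on the degree distribution of $G_{k-1}$.

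What then remains is to verify that this correction is precisely the sum $\sum_{t=1}^{a_k-1} t\, d_{r-t} + a_k \sum_{t=a_k}^{r} d_{r-t}$. The key manipulation is to group vertices of $G_{k-1}$ by degree: each vertex of degree $i$ in $G_{k-1}$ contributes the same amount to the correction, so the vertex-indexed sum in \thref{Gen Stars exact} collapses into a $d_i$-weighted sum. The natural split at $t = a_k$ in the claimed formula reflects a saturation phenomenon coming from the star direction in the product. When $t < a_k$, every one of the $t$ relevant "copies" of a vertex across the star is effective and the contribution scales linearly in $t$ (giving the factor $t$); once $t \geq a_k$, the number of effective copies stops growing and saturates at $a_k$ (giving the flat factor $a_k$).

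The main obstacle, as far as there is one, is almost purely bookkeeping: aligning the indexing of the specialised formula with the two sums in \thref{Stars exact}, handling the endpoint $t = a_k$ at which the two regimes meet, and confirming that degrees outside the range $[r-a_k,\,r]$ contribute nothing, so that every vertex of $G_{k-1}$ is accounted for exactly once. Once this alignment is checked, the formula in \thref{Stars exact} drops out with no further work.
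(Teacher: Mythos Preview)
Your plan is essentially the paper's own approach, but it has one genuine gap: you never verify the hypothesis of \thref{Gen Stars exact}. That theorem does not apply to an arbitrary graph $G$; it requires the existence of a proper edge-colouring $c$ of $G$ with $m_e(G,i)=\dim(W^{i}_{G,c})$ for $i\in\{r-1,r\}$. You assert that \thref{Gen Stars exact} applies with $G=G_{k-1}$ without checking this. The paper handles this via \thref{G star equal} (equivalently \thref{prod equal}): starting from $G_0=K_1$, where $m_e(K_1,r)=0=\dim(W^r_{K_1,c})$ trivially, one applies \thref{G star equal} inductively to conclude that each $G_i$ admits a colouring with $m_e(G_i,r)=\dim(W^r_{G_i,c})$ for all $r\geq 0$. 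Only then is the hypothesis of \thref{Gen Stars exact} available for $G_{k-1}$. Your proposal needs this inductive step (or an equivalent justification) to be complete.

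A minor remark: the bookkeeping you describe is unnecessary. Once you substitute $G=G_{k-1}$ and replace the star parameter by $a_k$ in \thref{Gen Stars exact}, the resulting formula is literally identical to the one in \thref{Stars exact}; there is no reindexing, no endpoint issue at $t=a_k$, and no need to argue about saturation. The two sums already appear in exactly that form in \thref{Gen Stars exact}.
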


Let $k\geq \ell\geq 3$. Define $H_{k,\ell}$ to be the \emph{theta graph} obtained by joining two vertices by three internally vertex-disjoint paths of length $1$, $\ell-1$, and $k-1$. We also provide a general recursive formula for $m_e(G\square H_{k,\ell},r)$ when $k\ge \ell\ge 4$ (see \thref{Gen joined cycles exact} below). As a consequence of \thref{Gen joined cycles exact}, we obtain the following.

\begin{restatable}{thm}{JCyclesExact}
\thlabel{joined cycles exact}
Let $t>0$ and $r>1$ be integers. Let $k_1,...,k_t,\ell_1,...,\ell_t\in \Z^+$ such that $k_i\geq\ell_i\geq 4$ for all $i\in [t]$. Let $G_i = H_{k_1,\ell_1}\square \cdots \square H_{k_i,\ell_i}$ for $i\in [t]$, and let $G_0=K_1$. For $0\leq i\leq \Delta(G_{t-1})$, let $d_i$ denote the number of vertices in $G_{t-1}$ of degree $i$. Then
\begin{align*}
    m_e(G_t,r) &= m_e(G_{t-1},r) + (k_t+\ell_t-5) m_e(G_{t-1},r-1) + 2m_e(G_{t-1},r-2) \\ &\hspace{4em} + d_{r-1} + (k_t+\ell_t-3)d_{r-2} + (k_t+\ell_t-1) \sum\limits_{i=0}^{r-3} d_i.
\end{align*}
\end{restatable}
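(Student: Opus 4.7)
The plan is to derive \thref{joined cycles exact} as an immediate corollary of the general recursion \thref{Gen joined cycles exact}. First I would observe that, by associativity of the Cartesian product, $G_t = G_{t-1}\square H_{k_t,\ell_t}$, and since $k_t\ge\ell_t\ge 4$ the hypothesis of \thref{Gen joined cycles exact} is satisfied. A single application of that theorem, taking the role of the base graph to be $G_{t-1}$ and the theta graph to be $H_{k_t,\ell_t}$, produces exactly the claimed formula; the $d_i$ appearing on the right-hand side of \thref{Gen joined cycles exact} correspond to the degree sequence of the base graph of the product, which here is $G_{t-1}$, in precise agreement with the statement. No induction on $t$ is needed, because the claim is itself a recursive formula rather than a closed-form expression.

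The only care required is in checking boundary conventions: when $r=1$ the terms $d_{r-2}$ and $\sum_{i=0}^{r-3} d_i$ should be interpreted as $0$, and when $r=2$ the sum should be empty. I would confirm that these conventions match the formulation of \thref{Gen joined cycles exact}. If the degree counts in \thref{Gen joined cycles exact} happen to be written with a different symbol (for instance, via a $\deg$-profile vector rather than a bare $d_i$), the translation is purely cosmetic. One may also double-check the edge case $t=1$, where $G_0 = K_1$ has $d_0 = 1$ and $d_i = 0$ otherwise, which under the formula recovers $m_e(H_{k_1,\ell_1}, r)$ as the correct ``pure theta graph'' count coming out of \thref{Gen joined cycles exact}.

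The main obstacle lies not in this corollary at all but in proving \thref{Gen joined cycles exact} itself. Its proof must supply both an explicit construction of an $r$-percolating set in $G\square H_{k,\ell}$ of the stated size, built from an optimal percolating set in $G$ together with carefully chosen edges in copies of $H_{k,\ell}$, and a matching lower bound via the polynomial method \thref{HHQ ineq} of Hambardzumyan, Hatami, and Qian. The coefficients $k+\ell-5$, $2$, $k+\ell-3$ and $k+\ell-1$ in the formula reflect the edge count $k+\ell-1$ of $H_{k,\ell}$ together with the two degree-$3$ ``pole'' vertices joined by three internally disjoint paths, and identifying the correct polynomial witnesses for the three path-types is where the real work of \thref{Gen joined cycles exact} sits. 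Once that theorem is in hand, \thref{joined cycles exact} follows with no further argument.
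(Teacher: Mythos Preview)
Your derivation has a genuine gap: you overlook the colouring hypothesis of \thref{Gen joined cycles exact}. That theorem does not apply to an arbitrary base graph $G$; it requires the existence of a proper edge-colouring $c$ of $G$ with $m_e(G,i)=\dim(W^{i}_{G,c})$ for each $r-2\le i\le r$. Knowing only that $k_t\ge\ell_t\ge 4$ does nothing to establish this for $G=G_{t-1}$, and so your single application of \thref{Gen joined cycles exact} is not justified as written.

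The paper fills this gap via \thref{G theta equal} (equivalently \thref{prod equal}): starting from $G_0=K_1$, where trivially $m_e(K_1,r)=0=\dim(W^r_{K_1,c})$ for all $r$, one applies \thref{G theta equal} repeatedly to conclude that each $G_{t-1}$ admits a proper edge-colouring $c$ with $m_e(G_{t-1},r)=\dim(W^r_{G_{t-1},c})$ for all $r\ge 0$. Only then does \thref{Gen joined cycles exact} apply with $G=G_{t-1}$, yielding the stated recursion. So contrary to your claim, an inductive step on $t$ \emph{is} needed---not to unwind the recursion, but to certify that the colouring hypothesis holds at each stage. Your remarks about boundary conventions and the $t=1$ base case are fine (note that the statement already assumes $r>1$, so the $r=1$ discussion is moot), and your description of where the real work lies inside \thref{Gen joined cycles exact} is accurate; but the deduction of \thref{joined cycles exact} itself is incomplete without invoking \thref{G theta equal}.
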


The paper is organised as follows. In Section~\ref{Sec:Lemma} we introduce the method that will be used to provide the lower bounds in all of our theorems, and prove a useful little lemma. In Section~\ref{Sec:Trees} we focus on results for trees. The main result of that section is \thref{trees exact}. Theta graphs are the focus of Section~\ref{Sec:Joined Cycles}, where \thref{Gen joined cycles exact} is proved. In Section~\ref{Sec:cons}, we explore several interesting consequences of Theorems \ref{trees exact} and \ref{Gen joined cycles exact}, in particular the applications to Theorems \ref{Stars exact} and \ref{joined cycles exact}. Some concluding remarks are given in Section~\ref{sec:concl}, where we discuss the limitations of the method, the relationship to another well-used method for providing lower bounds on weak saturation numbers, and possible future directions for work.

\section{Preliminaries}
\label{Sec:Lemma}

In this section, we provide the background to the algebraic technique employed to prove the lower bound in Theorems \ref{trees exact} and \ref{Gen joined cycles exact}. Note that, by convention, the zero polynomial has degree $-1$. Moreover, for ease of notation, given an edge-colouring $c:E(G)\rightarrow \R$ and an edge $e\in E(G)$, we often denote $c(e)$ by $c_e$.

\begin{defn} 
\thlabel{W}
Let $r\geq0$ be an integer, $G=(V,E)$ a graph, and $c:E\rightarrow \R$ a proper edge-colouring of $G$. Define $W^r_{G,c}$ to be the vector space consisting of all vectors $\p=(p_v)_{v\in V}$ with entries in $\R[x]$ such that
\begin{enumerate}[topsep=0pt]
\item $\deg(p_v)\leq \min\{r,\deg(v)\}-1$ for all $v\in V$, and
\item $p_u(c_{uv})=p_v(c_{uv})$ for every edge $uv\in E$.
\end{enumerate} 
\end{defn}

Note that, if $E(G)=\emptyset$, then $\dim(W^r_{G,c})=0$ for all $r\geq 0$. Furthermore, $\dim(W^0_{G,c})=0$ for any graph $G$ and proper edge-colouring $c:E(G)\rightarrow \R$.

\begin{restatable}{thm}{HHQIneq}
    \thlabel{HHQ ineq}
\emph{\cite{HHQ}} Let $c:E\rightarrow \R$ be a proper edge-colouring of a graph $G=(V,E)$, and let $r\geq 0$ be an integer. Then $m_e(G,r)\geq \dim\left(W^r_{G,c}\right)$.
\end{restatable}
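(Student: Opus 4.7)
The plan is to show that for any $r$-percolating set $S \subseteq E(G)$, the \emph{evaluation map}
\[\Phi: W^r_{G,c} \to \R^S, \qquad \Phi(\p) = \bigl(p_u(c_{uv})\bigr)_{uv \in S}\]
is injective; by rank-nullity this immediately yields $|S| \geq \dim(W^r_{G,c})$, and taking $S$ to be a minimum percolating set gives the theorem. Note that $\Phi$ is well-defined (independent of which endpoint of $uv$ is chosen) because of condition 2 in \thref{W}, and it is clearly linear.

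To establish injectivity, I would fix $\p \in \ker(\Phi)$ and prove by induction on infection time that \emph{every edge $e = uv \in E(G)$ satisfies $p_u(c_e) = p_v(c_e) = 0$}. The base case is $e \in S$, for which the assumption $\Phi(\p) = 0$ gives $p_u(c_e) = 0$, and condition 2 then yields $p_v(c_e) = 0$. For the inductive step, suppose $e = uv$ becomes infected at step $t+1$ because $u$ is incident with $r$ already-infected edges $e_1, \ldots, e_r$ (all different from $e$). The inductive hypothesis gives $p_u(c_{e_i}) = 0$ for $i = 1, \ldots, r$. Since $c$ is a \emph{proper} edge-colouring, the values $c_{e_1}, \ldots, c_{e_r}$ are pairwise distinct, so $p_u$ has at least $r$ distinct roots. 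Combined with $\deg(p_u) \leq r - 1$ from condition 1, this forces $p_u \equiv 0$; in particular $p_u(c_e) = 0$, and condition 2 gives $p_v(c_e) = 0$.

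Because $S$ percolates, every edge of $G$ is eventually infected, so the claim above implies that for each vertex $v$ we have $p_v(c_e) = 0$ for every edge $e$ incident with $v$. By properness, the $\deg(v)$ values $c_e$ are pairwise distinct, giving $p_v$ at least $\deg(v)$ distinct roots, while condition 1 bounds $\deg(p_v) \leq \min\{r, \deg(v)\} - 1 \leq \deg(v) - 1$. Hence $p_v$ is the zero polynomial for every $v$, so $\p = 0$ and $\Phi$ is injective. The only subtle point — and what I would flag as the place to be careful — is how the two halves of the degree cap in condition 1 each do real work: the bound $\leq r - 1$ is what powers the inductive step (via $r$ distinct roots produced by the bootstrap trigger), while the bound $\leq \deg(v) - 1$ is what kills $p_v$ at low-degree vertices (where the bootstrap process alone never produces enough roots) using only percolation of the full edge set.
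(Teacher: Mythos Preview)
Your proof is correct and is essentially the standard argument for this result. Note, however, that the paper itself does not give a proof of this theorem: it is quoted from \cite{HHQ} and used as a black box, so there is no in-paper proof to compare against. Your argument (evaluation map on a percolating set, injectivity via induction on infection time using the $r-1$ degree bound, then killing each $p_v$ via the $\deg(v)-1$ bound) is exactly the proof given in \cite{HHQ}.
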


We now prove a lemma that will be useful in determining lower bounds on $m_e(G\square H)$ for various graphs $H$. In order to state \thref{zeros lemma}, we first require the following definition.

\begin{defn}
\thlabel{Z}
    Let $c:E\rightarrow \R$ a proper edge-colouring of a graph $G=(V,E)$. Define $Z_c$ to be the set of vectors $\z =(z_v)_{v\in V}\in \R^{|V|}$ where, for each $v\in V$, the entry $z_v$ is chosen from $\{c_{uv}:u\in N_G(v)\}$. Now, let $H$ be a graph, and let $\p=(p_u)_{u\in V(G\square H)}$ be a vector where each entry $p_u$ is a univariate polynomial in $\R[x]$. For $\z\in Z_c$, define $\p(\z)\in \R^{|V(G\square H)|}$ to be the vector where the entry corresponding to a vertex $(u,v)\in V(G\square H)$ is $p_{(u,v)}(z_u)$. Here we say $\p$ is \emph{evaluated} at $\z$.
\end{defn}

\begin{lemma}
\thlabel{zeros lemma}
Let $G$ and $H$ be graphs, and let $c:E(G)\rightarrow \R$ be a proper edge colouring of $G$. Let $\p=(p_{(u,v)})_{(u,v)\in V(G\square H)}$ be a vector where each entry is a univariate polynomial in $\R[x]$. Suppose there exists $(u,v)\in V(G\square H)$ such that $p_{(u,v)}=q \prod_{\alpha\in A} (x-\alpha)$ for
\begin{itemize}[topsep=-1pt,itemsep=0pt]
    \item some non-zero polynomial $q$ with $\deg(q)\leq \deg_G(u)-1$, and
    \item some set $A\subseteq \R$ with $A\cap c(E(G))=\emptyset$.
\end{itemize}
Then there exists $\z\in Z_c$ such that $\p(\z)\neq 0$.
\end{lemma}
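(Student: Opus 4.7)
The plan is a direct pigeonhole argument: it suffices to find $\z \in Z_c$ so that the entry of $\p(\z)$ corresponding to the particular vertex $(u,v)$ is nonzero. Since the factorisation $p_{(u,v)} = q \prod_{\alpha \in A}(x - \alpha)$ is given, I need only exhibit a choice of $z_u \in \{c_{uw} : w \in N_G(u)\}$ at which neither $q$ nor any of the linear factors vanishes; the coordinates $z_{u'}$ for $u' \neq u$ can then be filled in arbitrarily from $\{c_{u'w'} : w' \in N_G(u')\}$, and the lemma follows since $\p(\z)$ has the nonzero entry $p_{(u,v)}(z_u)$ at position $(u,v)$.

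The linear factors are the easy part. Every element of $\{c_{uw} : w \in N_G(u)\}$ lies in $c(E(G))$, which by hypothesis is disjoint from $A$, so $\prod_{\alpha \in A}(z_u - \alpha)$ is automatically nonzero regardless of which color incident to $u$ we choose. It therefore remains only to find some $z_u$ in this set with $q(z_u) \neq 0$.

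For this, I would use the fact that $c$ is a proper edge-colouring: the values $\{c_{uw} : w \in N_G(u)\}$ are $\deg_G(u)$ pairwise distinct real numbers. On the other hand, $q$ is a nonzero polynomial of degree at most $\deg_G(u) - 1$, so it has at most $\deg_G(u) - 1$ distinct real roots. Pigeonhole then forces at least one color $c_{uw}$ to satisfy $q(c_{uw}) \neq 0$, and I set $z_u := c_{uw}$. Combining this with the previous paragraph gives $p_{(u,v)}(z_u) \neq 0$, which completes the argument.

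Since the whole proof reduces to the two observations above (distinct colors by properness of $c$, and the degree bound on $q$), there is no real obstacle; the role of the lemma is really to package this pigeonhole step in a form convenient for the later lower-bound proofs, where it will be applied to vectors $\p$ in some $W^r_{G\square H, c'}$ to show that $\p$ cannot be the zero vector.
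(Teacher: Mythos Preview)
Your proposal is correct and follows essentially the same argument as the paper: both use that $A$ avoids $c(E(G))$ so only roots of $q$ matter, and then pigeonhole $\deg(q)\leq \deg_G(u)-1$ against the $\deg_G(u)$ distinct colours at $u$ (distinct because $c$ is proper) to find $z_u$ with $p_{(u,v)}(z_u)\neq 0$, filling in the remaining coordinates arbitrarily.
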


\begin{proof}  
Since $A\cap c(E(G))=\emptyset$, by definition $p_{(u,v)}$ has at most $\deg(q)$ zeros in $c(E(G))$. Thus, since $\deg(q)\leq \deg_G(u)-1$, the polynomial $p_{(u,v)}$ cannot evaluate to zero on all $\deg_G(u)$ distinct colours in $\{c_{uw}:w\in N_G(u)\}$. Let $w\in N_G(u)$ such that $p_{(u,v)}(c_w)\neq 0$. Then $\p(\z)\neq 0$ for any vector $\z=(z_x)_{x\in V(G)}\in Z_c$ such that $z_u=c_{uw}$.
\end{proof}

\section{Products of Trees} 
\label{Sec:Trees}
 
The main goal of this section is to prove  a recursive formula for $m_e(G\square T,r)$ when the minimum degree of $G$ is sufficiently large (compared to $r$). Given a graph $G$, we denote by $d^G_t$ the number of vertices in $G$ of degree~$t$. 

\begin{restatable}{thm}{TreesExact}
\thlabel{trees exact}
    Let $G$ be a graph, let $T$ be a tree of order $n\geq 2$, and let $r\in \Z^+$. Suppose that either $T$ is a path or $G$ is a graph with $\delta(G)\geq r-2$. Suppose there exists a proper edge-colouring $c$ of $G$ such that $m_e(G,i)= \dim(W_{G,c}^i)$ for all $i \in \{r-1, r\}$. Then 
    \begin{align*}
        m_e(G\square T,r)&= m_e(G,r) + (n-1)m_e(G,r-1) + d^G_{r-1} + \sum\limits_{t=0}^{r-2}d^G_{t}\left(1+\sum\limits_{i=2}^{\Delta(T)} d_i^T\right).
    \end{align*} 
\end{restatable}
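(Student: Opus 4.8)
The plan is to prove the identity by sandwiching: I will show that $m_e(G\square T,r)$ is at most the claimed expression by exhibiting a percolating set of that size, and at least the claimed expression via \thref{HHQ ineq} applied to a suitable proper edge-colouring $c'$ of $G\square T$. Fix a root $v_1$ of $T$; for $v\neq v_1$ write $v'$ for the parent of $v$ and $T_v$ for the subtree of descendants of $v$ (including $v$). Call $u\in V(G)$ \emph{low} if $\deg_G(u)=t\leq r-1$ and \emph{high} otherwise; the copies $\{u\}\times V(T)$ of $T$ over low vertices $u$ --- I will call them \emph{columns} --- are the source of the tree-dependent terms in the formula. Throughout I use the elementary facts $m_e(T,1)=\dim W^1_{T,\beta}=1$, $m_e(T,2)=\dim W^2_{T,\beta}=1+\sum_{i=2}^{\Delta(T)}d^T_i$, and, when $T=P_n$, $m_e(P_n,m)=\dim W^m_{P_n,\beta}=n-1$ for every $m\geq 2$; the hypothesis that $T$ is a path or $\delta(G)\geq r-2$ is exactly what guarantees that, whenever $d^G_t\neq 0$ with $t\leq r-1$, either $T$ is a path or $r-t\in\{1,2\}$, so that the relevant $m_e(T,r-t)$ and $\dim W^{r-t}_{T,\beta}$ always equal $1$ (if $t=r-1$) or $1+\sum_{i=2}^{\Delta(T)}d^T_i$ (if $t\leq r-2$).

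For the upper bound I take $S=S_1\cup S_2\cup S_3$, where $S_1$ is an optimal $r$-percolating set of $G$ in the copy $G\times\{v_1\}$, $S_2$ is a union over the other $n-1$ copies of an optimal $(r-1)$-percolating set of $G$ in each, and $S_3$ consists, for each low $u$ with $\deg_G(u)=t$, of an optimal $(r-t)$-percolating set of the copy of $T$ on the column over $u$. Then $|S_1|+|S_2|+|S_3|=m_e(G,r)+(n-1)m_e(G,r-1)+\sum_{t=0}^{r-1}d^G_t\,m_e(T,r-t)$, and by the facts above this equals the claimed value. The content is that $S$ percolates, which I would prove by induction on the distance of a copy from $v_1$, carrying the invariant that once a copy $G\times\{v\}$ with $v\neq v_1$ is entirely infected so are all vertical edges from it to its children: for a high $u$ this is automatic (a fully infected copy already supplies $(u,v)$ with $r$ infected edges), while for a low $u$ the seed in $S_3$, aided by the horizontal infections that arrive once the incident copies finish, completes the infection of that column's vertical edges; granted the invariant, the $(r-1)$-percolating set in a non-root copy does percolate it, because every vertex there receives an infected vertical edge from its parent copy.

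For the lower bound, extend $c$ to a proper edge-colouring $c'$ of $G\square T$ by keeping $c$ on every copy of $G$ and assigning to the $|E(T)|$ classes of vertical edges pairwise distinct fresh colours $\beta_{vv'}$, all avoiding $c(E(G))$. By \thref{HHQ ineq} it suffices to produce as many linearly independent vectors in $W^r_{G\square T,c'}$ as the claimed value, and I would build three families, paralleling $S_1,S_2,S_3$: (a) for each $\mathbf q\in W^r_{G,c}$, the vector that is $\mathbf q$ on every copy of $G$ (constant along $T$); (b) for each $v\neq v_1$ and each $\mathbf q\in W^{r-1}_{G,c}$, the vector equal to $(x-\beta_{vv'})\mathbf q$ on each copy over $T_v$ and $0$ on the rest; (c) for each low $u$ with $\deg_G(u)=t$, the vectors supported on the column over $u$ only, whose $(u,v)$-entry is $\big(\prod_{w\in N_G(u)}(x-c_{uw})\big)s_v$ with $(s_v)_v\in W^{r-t}_{T,\beta}$. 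Checking these lie in $W^r_{G\square T,c'}$ is routine: the factor $(x-\beta_{vv'})$ in (b) and the factor $\prod_{w}(x-c_{uw})$ in (c) are exactly what turn the agreement conditions across the parent edge of $v$, respectively across the horizontal edges at $u$, into $0=0$, and the degree each factor costs is absorbed by the extra $\deg_T(v)$ in the degree bound for a vertex of $G\square T$. Using that $\dim W^i_{G,c}=m_e(G,i)$ for $i\in\{r-1,r\}$ and the values of $\dim W^{r-t}_{T,\beta}$ recorded above, these three families have total size equal to the claimed value.

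Two steps do the real work, and I expect the harder to be the percolation verification of the construction: coordinating the order in which horizontal and vertical edges become infected across all the copies and columns so that the distance-induction genuinely closes is delicate, and this is one of the two places where the hypothesis ``$T$ is a path or $\delta(G)\geq r-2$'' is essential. The other substantive step is the joint linear independence of families (a), (b), (c): distinct columns in (c) have disjoint supports, so the issue is to disentangle (c) from (a) and (b), which I would do by peeling from the root down --- restricting a vanishing combination to $G\times\{v_1\}$ and comparing degrees (each column polynomial there is a multiple of $\prod_{w}(x-c_{uw})$, of strictly larger degree than anything from (a)) kills family (a) and forces the column polynomials over $v_1$ to vanish; then, passing to the children of $v_1$ and continuing down $T$, at each copy one isolates the surviving (b)-summand and the column polynomials over that vertex by evaluating at the colours $c_{uw}$ incident to the low vertices $u$ (a polynomial of degree at most $t-1$ vanishing at all $t$ such colours must be zero, the mechanism behind \thref{zeros lemma}). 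Without the degree hypothesis, a low column with $\deg_G(u)<r-2$ would both require strictly more than $1+\sum_i d^T_i$ edges in the construction and contribute a $W^{r-t}_{T,\beta}$ of strictly larger dimension, so the clean formula --- and this proof --- genuinely break down.
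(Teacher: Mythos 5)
Your proposal is correct and follows essentially the same route as the paper: the same edge-colouring of $G\square T$, the same three families of vectors (your (a), (b), (c) correspond exactly to the paper's $A_0$, $\bigcup_{\ell}A_\ell$, and $X$), the same root-counting mechanism behind \thref{zeros lemma} to separate them, and the same level-by-level percolating construction for the upper bound. The only substantive ingredient you defer to an ``elementary fact'' is the inequality $\dim(W^{2}_{T,\beta})\ge 1+\sum_{i=2}^{\Delta(T)}d_i^T$ (equivalently, the independence of enough column vectors), which the paper proves by explicitly constructing the vectors $\p_v^i$; your packaging of the column contribution as $\prod_{w\in N_G(u)}(x-c_{uw})\cdot W^{r-t}_{T,\beta}$ and of the added seed edges as an optimal $(r-t)$-percolating set of the column is a clean reformulation of that same construction.
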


\thref{trees exact} follows immediately from the next two results. As is usually the case in proving bounds on $m_e(G,r)$, the upper bound in \thref{trees upper} is given by a construction, whilst the algebraic method of \thref{HHQ ineq} is used to give \thref{trees lower}.

\begin{restatable}{prop}{treesUpper}
\thlabel{trees upper}
Let $T$ be a tree of order $n\geq 2$. Let $r\in \Z^+$, and let $G$ be a graph. Then
\begin{align*}
    m_e(G\square T,r)&\leq m_e(G,r) + (n-1)m_e(G,r-1) + d^G_{r-1} \\ &\hspace{4em}  + \sum\limits_{t=1}^{r-1} d_{r-1-t}^G\left( 1+ t\sum\limits_{i=t+1}^{\Delta(T)} d_{i}^T + \sum\limits_{i=2}^{t} (i-1) d_{i}^T \right).
\end{align*} 
\end{restatable}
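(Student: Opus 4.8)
The plan is to prove the bound by induction on $n=|V(T)|$, at each step exhibiting an explicit $r$-percolating set of $G\square T$ whose size is the claimed right-hand side; write $f(T)$ for that right-hand side. I will repeatedly use two elementary facts. First, bond bootstrap percolation is monotone: enlarging the host graph by adding healthy edges, or enlarging the initially infected set, can only enlarge the set of eventually infected edges. Second, a \emph{boost} observation: if every vertex of a copy $G\times\{v\}$ has at least one infected edge leaving that copy, then starting from the image in $G\times\{v\}$ of any $(r-1)$-percolating set of $G$, the $r$-process infects all of $E(G\times\{v\})$, since a vertex with one infected external edge reaches the threshold $r$ as soon as it has $r-1$ infected edges inside the copy.

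For the base case $n=2$, note that $G\square K_2$ is the prism over $G$. Take a minimum $r$-percolating set in one copy of $G$, a minimum $(r-1)$-percolating set in the other copy, and every matching edge at a vertex $u$ with $\deg_G(u)\le r-1$. The first copy percolates by itself; then for each $u$ with $\deg_G(u)\ge r$ the vertex $(u,1)$ has $r$ infected incident edges and infects its matching edge, so every matching edge becomes infected, and the boost observation finishes the second copy. The total is $m_e(G,r)+m_e(G,r-1)+\sum_{t=0}^{r-1}d^G_t$, which equals $f(K_2)$ because $d^G_{r-1}+\sum_{t=1}^{r-1}d^G_{r-1-t}=\sum_{t=0}^{r-1}d^G_t$ and $d_i^T=0$ for $i\ge 2$.

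For the inductive step, take $n\ge 3$, choose a leaf $\ell$ of $T$ with neighbour $p$, put $T'=T-\ell$ (a tree of order $n-1\ge 2$), and set $j=\deg_{T'}(p)\ge 1$. By the inductive hypothesis fix a percolating set $S'$ of $G\square T'$ with $|S'|\le f(T')$, and let $S=S'\cup S_\ell\cup M$, where $S_\ell$ is the image in $G\times\{\ell\}$ of a minimum $(r-1)$-percolating set of $G$ and $M=\{(u,p)(u,\ell):\deg_G(u)\le r-1-j\}$. By monotonicity $S$ percolates all of $E(G\square T')$ within $G\square T$; once that has happened, each vertex $(u,p)$ with $\deg_G(u)\ge r-j$ has $\deg_G(u)+j\ge r$ infected incident edges and so infects $(u,p)(u,\ell)$, while the remaining matching edges lie in $M$. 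Hence every matching edge is infected, every vertex of $G\times\{\ell\}$ gains one external infected edge, and the boost observation shows $S_\ell$ completes $E(G\times\{\ell\})$; thus $S$ percolates $G\square T$. Finally $|S|\le f(T')+m_e(G,r-1)+|\{u:\deg_G(u)\le r-1-j\}|$, and a short computation with the degree sequences of $T$ and $T'$ — only $\deg(p)$ changes, from $j$ to $j+1$, with one new leaf added — shows this equals $f(T)$: writing $c_t(T)=1+t\sum_{i=t+1}^{\Delta(T)}d_i^T+\sum_{i=2}^{t}(i-1)d_i^T=1+\sum_{i=2}^{\Delta(T)}\min(t,i-1)d_i^T$, one finds that $c_t(T)-c_t(T')$ equals $1$ when $j\le t$ and $0$ otherwise, so $\sum_{t=1}^{r-1}d^G_{r-1-t}\big(c_t(T)-c_t(T')\big)=\sum_{t=j}^{r-1}d^G_{r-1-t}=|\{u:\deg_G(u)\le r-1-j\}|$.

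I expect the real work to be the verification in the inductive step that every matching edge between $G\times\{p\}$ and $G\times\{\ell\}$ becomes infected: this is exactly where $n\ge 3$ (equivalently $j\ge 1$) is used, which is why $n=2$ must be treated separately, and one must also be careful that placing $S'$ inside the larger graph $G\square T$ still percolates $E(G\square T')$ — this is the monotonicity point. The identity $f(T)-f(T')=m_e(G,r-1)+|\{u:\deg_G(u)\le r-1-j\}|$ is routine but needs some care with small values (e.g.\ $j=1$, where $p$ is a leaf of $T'$) and with the degenerate case $j\ge r$, where the extra term vanishes. The only genuinely creative step is to delete a leaf and recurse; after that the construction is essentially forced.
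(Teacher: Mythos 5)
Your proof is correct and uses essentially the same construction as the paper: an optimal $r$-percolating set in one copy of $G$, optimal $(r-1)$-percolating sets in the remaining copies, and connector edges at the vertices of $G$ whose degree is too small for the infection to jump between copies on its own. The difference is only presentational — you assemble the set by induction on leaf-deletion and verify the count by the telescoping identity for $c_t(T)-c_t(T')$, whereas the paper writes down the whole set at once and counts directly — and your identity and the boundary cases ($j\ge r$, $n=2$) all check out.
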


\begin{restatable}{prop}{treesLowerCor}
\thlabel{trees lower}
Let $T$ be a tree of order $n\geq 2$, and let $r\in \Z^+$. Let $c:E\rightarrow \R$ be a proper edge-colouring of a graph $G=(V,E)$. Then 
\begin{align*}
    m_e(G\square T,r)&\geq \dim\left(W^r_{G,c}\right) + (n-1) \dim\left(W^{r-1}_{G,c}\right) +d_{r-1}^G+\sum\limits_{t=0}^{r-2} d^G_{t}\left(1+\sum\limits_{i=2}^{\Delta(T)}d_i^T\right).
\end{align*}
\end{restatable}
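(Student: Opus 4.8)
The plan is to apply \thref{HHQ ineq}: it suffices to exhibit a proper edge-colouring $c'$ of $G\square T$ and a linearly independent family inside $W^r_{G\square T,c'}$ of size at least the claimed quantity. I would take $c'$ to agree with $c$ on every ``$G$-layer'' $V(G)\times\{v\}$, and to colour each ``$T$-edge'' $(u,v)(u,v')$ by $\gamma(vv')$, where $\gamma$ is a fixed proper edge-colouring of $T$ whose colours avoid $c(E(G))$ (possible since $\R$ is infinite); then $c'$ is proper, and $\deg_{G\square T}(u,v)=\deg_G(u)+\deg_T(v)$ with every $\deg_T(v)\ge1$. Fixing a leaf $\rho$ of $T$ as a root, I would then build three families of vectors in $W^r_{G\square T,c'}$:
\begin{itemize}
\item for each $\q$ in a basis of $W^r_{G,c}$, the vector with $p_{(u,v)}=q_u$ for all $v$;
\item for each non-root $v$ of $T$, with parent-edge colour $\gamma_v$ and rooted subtree $T_v$, and each $\q$ in a basis of $W^{r-1}_{G,c}$, the vector with $p_{(u,w)}=(x-\gamma_v)q_u$ if $w\in V(T_v)$ and $p_{(u,w)}=0$ otherwise;
\item for each $u_0\in V(G)$ with $\deg_G(u_0)=t\le r-1$, writing $c_1,\dots,c_t$ for the $c$-colours at $u_0$, and each $\mathbf{s}=(s_v)_v$ in a basis of $W^{r-t}_{T,\gamma}$, the vector with $p_{(u_0,v)}=\big(\prod_{j=1}^t(x-c_j)\big)s_v$ for all $v$ and $p_{(u,v)}=0$ for $u\ne u_0$.
\end{itemize}
Membership in $W^r_{G\square T,c'}$ is a routine check: in the second family the factor $x-\gamma_v$ makes the matching condition hold across the edge from $v$ to its parent, and the degree bounds follow since $\deg_T(w)\ge1$ on $T_v$; in the third, $\prod_j(x-c_j)$ vanishes at every $c$-colour at $u_0$ (so all $G$-edge conditions at $u_0$ hold), the $T$-edge conditions become precisely membership of $\mathbf{s}$ in $W^{r-t}_{T,\gamma}$, and the identity $t+\min\{r-t,\deg_T(v)\}=\min\{r,t+\deg_T(v)\}$ yields exactly the degree bound at $(u_0,v)$.

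For the count, a short direct argument (root $T$ at an internal vertex and propagate the matching constraints outward, or simply compare dimensions of constraint spaces) gives $\dim(W^1_{T,\gamma})=1$ and $\dim(W^s_{T,\gamma})\ge1+\sum_{i\ge2}d^T_i$ for every $s\ge2$; combined with the inclusion $W^2_{T,\gamma}\subseteq W^s_{T,\gamma}$ this covers all exponents $s\ge1$ that arise. Hence the third family has size $\sum_{t=0}^{r-1}d^G_t\dim(W^{r-t}_{T,\gamma})\ge d^G_{r-1}+\sum_{t=0}^{r-2}d^G_t\big(1+\sum_{i=2}^{\Delta(T)}d^T_i\big)$, the first two families have sizes $\dim(W^r_{G,c})$ and $(n-1)\dim(W^{r-1}_{G,c})$, and the three totals add up to at least the right-hand side of the statement. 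So everything reduces to showing the union of the three families is linearly independent.

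The independence I would establish in two stages. Stage~1: restrict a vanishing combination to the root layer $V(G)\times\{\rho\}$. Every second-family vector vanishes there because $\rho\notin V(T_v)$ for $v\ne\rho$; each third-family vector contributes at $(u_0,\rho)$ a scalar multiple of $\prod_j(x-c_j)$, and since $\deg_T(\rho)=1$ the relevant entry of $\mathbf{s}$ is constant, so this is a polynomial of degree exactly $\deg_G(u_0)$ whenever the scalar is nonzero --- impossible to cancel against the first-family entry at $u_0$, which has degree at most $\deg_G(u_0)-1$. Thus these scalars all vanish, after which $\sum a_{\q}\q=0$ in $W^r_{G,c}$ kills all first-family coefficients. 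Stage~2: for each $u\in V(G)$, restrict the remaining combination to the ``$T$-fibre over $u$'', $(p_{(u,v)})_{v\in V(T)}$. Writing $\mathrm{Anc}(v)$ for the vertices of the $\rho$--$v$ path in $T$ other than $\rho$ (so $v\in\mathrm{Anc}(v)$), the fibre equation at $v$ reads $\sum_{v_0\in\mathrm{Anc}(v)}(x-\gamma_{v_0})B^{(u)}_{v_0}+\big(\prod_j(x-c^{(u)}_j)\big)S^{(u)}_v=0$, where $B^{(u)}_{v_0}$ and $S^{(u)}_v$ are the induced combinations of the $W^{r-1}_{G,c}$- and $W^{r-\deg_G(u)}_{T,\gamma}$-basis polynomials (the latter term being absent when $\deg_G(u)>r-1$). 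Inducting on the depth of $v$, all ancestor terms with $v_0\ne v$ vanish, leaving $(x-\gamma_v)B^{(u)}_v+\big(\prod_j(x-c^{(u)}_j)\big)S^{(u)}_v=0$; evaluating at each $c^{(u)}_j$, which differs from $\gamma_v$ since $\gamma_v\notin c(E(G))$, forces $B^{(u)}_v$ to vanish at all $\deg_G(u)$ colours $c^{(u)}_j$, whence $B^{(u)}_v=0$ (as $\deg B^{(u)}_v\le\deg_G(u)-1$) and then $S^{(u)}_v=0$. Letting $u$ range over all of $V(G)$ kills the second-family coefficients via the basis of $W^{r-1}_{G,c}$, and letting $u_0$ range over the degree-$(\le r-1)$ vertices kills the third-family coefficients via the bases of the $W^{r-t}_{T,\gamma}$; \thref{HHQ ineq} then gives the stated lower bound.

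I expect the main obstacle to be exactly this independence bookkeeping: one must separate three qualitatively different kinds of vector at once --- the first detected by a degree jump on the root layer, the second peeled off by induction down the tree, and the third localised to one $G$-vertex yet nonzero on every fibre --- and the evaluation step in Stage~2 closes only because the bound $\deg B^{(u)}_v\le\deg_G(u)-1$ holds at \emph{every} vertex $u$ of $G$, regardless of how $\deg_G(u)$ compares to $r$. (Note that \thref{zeros lemma} is not needed for this proof, although it is the natural device for the corresponding step in the theta-graph arguments of Section~\ref{Sec:Joined Cycles}.)
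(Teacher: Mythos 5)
Your proof is correct, and while it shares the paper's skeleton --- the same product colouring (the paper uses one fresh colour $\alpha_i$ per non-root vertex of $T$, a special case of your $\gamma$) and the identical first two families $A_0$ and $\{A_\ell\}_{\ell\in[n-1]}$ --- it diverges in two genuine ways. First, for the ``extra'' $d^G_{r-1}+\sum_{t\le r-2}d^G_t(1+\sum_{i\ge 2}d^T_i)$ vectors, the paper writes down explicit polynomials ($\p^0_v$, constant along the fibre over $v$, and one vector $\p^i_v$ per internal vertex $i$ of $T$ when $\deg_G(v)\le r-2$), whereas you package the whole fibre over $v$ as $\prod_{u\in N_G(v)}(x-c_{uv})$ times a basis of $W^{r-\deg_G(v)}_{T,\gamma}$ and extract the count from the generic bound $\dim(W^s_{T,\gamma})\ge\sum_{w}\min\{s,\deg_T(w)\}-(n-1)$, which for $s\ge 2$ equals $1+\sum_{i\ge2}d^T_i$; this is more modular, explains where the numerics come from, and exhibits the paper's explicit vectors as merely one choice of basis (their fibre restrictions, divided by $\prod(x-c_{uv})$, lie in $W^2_{T,\gamma}$). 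Second, for independence the paper invokes \thref{zeros lemma}: the extra vectors vanish identically on $Z_c$ while no nonzero element of $\vspan(A)$ does, and independence within each block is checked by support arguments. You instead run a direct two-stage elimination --- a degree comparison on the root layer (degree exactly $\deg_G(u_0)$ versus at most $\deg_G(u_0)-1$, which is the same dichotomy \thref{zeros lemma} exploits, just used pointwise rather than via an evaluation vector $\z$), followed by a fibre-wise induction down the tree in which evaluating at the $\deg_G(u)$ colours $c^{(u)}_j$ kills $B^{(u)}_v$ and then $S^{(u)}_v$. Both routes are sound; yours is self-contained and avoids \thref{zeros lemma}, while the paper's explicit vectors make the membership checks concrete and line up term by term with the upper-bound construction of \thref{trees upper}. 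The only places where you should write out detail in a final version are the one-line constraint count for $\dim(W^s_{T,\gamma})$ and the observation that in Stage~1 only the \emph{aggregated} constants $\sum_{\mathbf{s}}b_{\mathbf{s}}s_\rho$ are forced to vanish (individual third-family coefficients die only in Stage~2, as you correctly arrange), including the degenerate case $\deg_G(u)=0$, where the empty product and the bound $\deg(q_u)\le -1$ take over.
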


Finally, we show that the condition in \thref{trees exact} requiring either $T$ to be a path or $\delta(G)\geq r-2$ is necessary in order for the bounds given in \thref{trees upper,trees lower} to match.

\begin{prop}
\thlabel{lem:trees exact}
    Let $T$ be a tree of order $n\geq 2$, and $r\in \Z^+$. Let $c:E\rightarrow \R$ be a proper edge-colouring of a graph $G=(V,E)$, and suppose $m_e(G,i)=\dim(W^i_{G,c})$ for $r-1\leq i\leq r$. If the upper and lower bounds on $m_e(G\square T)$ given in \thref{trees upper,trees lower} match, then either $\delta(G)\geq r-2$ or $T$ is a path. 
\end{prop}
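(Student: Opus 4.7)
The plan is to directly compare the upper and lower bounds under the hypothesis $m_e(G,i) = \dim(W_{G,c}^i)$ for $i \in \{r-1, r\}$. With this hypothesis, the three terms $m_e(G,r) + (n-1)m_e(G,r-1) + d^G_{r-1}$ appear identically on both sides, so matching of the two bounds is equivalent to matching of the residual double sums, indexed by $t \in \{1,\ldots,r-1\}$ in \thref{trees upper} and by $t \in \{0,\ldots,r-2\}$ in \thref{trees lower}.

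To compare these on equal footing, I would substitute $s = r-1-t$ in the upper-bound sum so that both sums are indexed by $s \in \{0,\ldots,r-2\}$ with common weight $d_s^G \geq 0$. Splitting the lower bound's inner sum as $\sum_{i=2}^{\Delta(T)} d_i^T = \sum_{i=2}^{r-1-s} d_i^T + \sum_{i=r-s}^{\Delta(T)} d_i^T$ and simplifying, the coefficient of $d_s^G$ in the upper bound minus the coefficient of $d_s^G$ in the lower bound reduces to
\[
(r-2-s)\sum_{i=r-s}^{\Delta(T)} d_i^T \;+\; \sum_{i=3}^{r-1-s}(i-2)\, d_i^T,
\]
which is a sum of two nonnegative terms. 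Hence the two residual sums agree if and only if, for each $s \in \{0,\ldots,r-2\}$ with $d_s^G > 0$, both of these nonnegative terms vanish.

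For $s = r-2$ both terms vanish automatically: the first because $r-2-s = 0$, the second because its summation range $\{3,\ldots,1\}$ is empty. For $s < r-2$, however, $r-2-s > 0$, so vanishing forces $d_i^T = 0$ for every $i \geq r-s$ as well as for every $i \in \{3,\ldots,r-1-s\}$. These two conditions together forbid $T$ from having any vertex of degree at least $3$, and since $T$ is a connected graph on $n \geq 2$ vertices this means $T$ is a path.

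The conclusion now follows by contrapositive: if the two bounds match and $T$ is not a path, then we must have $d_s^G = 0$ for every $s \in \{0,\ldots,r-3\}$, which is precisely $\delta(G) \geq r-2$. The proof is essentially an algebraic identity check, and the main (minor) obstacle is bookkeeping, in particular correctly handling the empty-sum boundary cases at $s=r-2$ and when $\Delta(T) < r-s$; the underlying comparison is simply a nonnegativity argument on a coefficient-wise basis.
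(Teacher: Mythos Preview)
Your proof is correct and follows essentially the same approach as the paper: both reindex the upper-bound sum, compare coefficients of $d_s^G$ term by term, and use that the resulting difference is a sum of nonnegative quantities to conclude that any $s\le r-3$ with $d_s^G>0$ forces $\Delta(T)\le 2$. Your presentation is slightly more streamlined in that you make the nonnegativity explicit up front, whereas the paper reaches the same conclusion through a short case analysis, but the substance is identical.
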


We will use the following notation throughout. Given a tree $T$, we consider $T$ to be rooted at a leaf, which we label $0$. Let the levels of $T$ with respect to this root be labelled $L_0,L_1,...,L_k$ for some $k\geq 1$, where $L_0$ contains the root of $T$, and $v\in L_d$ for each $v\in V(T)$ such that $d_T(0,v)=d$. Let $l(v)$ denote the level of $v\in V(T)$; that is, $l(v)=d$ if and only if $v\in L_d$. Label the remaining vertices of $T$ with the integers in $[n-1]$ such that $l(i)\leq l(j)$ for all $0\leq i<j\leq n-1$. If $ij\in E(T)$ with $l(i)=l(j)+1$, then we call $j$ the \emph{parent} of $i$ and $i$ a \emph{child} of $j$. If $j$ is the parent of $i$ in $V(T)$, we denote by $T_i$ the component of $T \setminus \{ij\}$ containing the vertex $i$; we consider $T_i$ to be rooted at $i$.

Given a graph $G$, we let $G_0,G_1,...,G_{n-1}$ denote the $n$ copies of $G$ in $G\square T$ corresponding to the $n$ vertices in $T$, where $G_i$ corresponds to the vertex $i\in V(T)$. Define $l(G_i):=l(i)$ for each $i\in V(T)$. For ease of notation, we write $G_i\in L_d$ when $l(G_i)=d$. For each vertex $v\in V(G)$, denote its corresponding vertex in $G_i$ by $v_i$, where $0\leq i\leq n-1$. If $v_iv_j\in E(G\square T)$ with $l(i)=l(j)+1$, then we call $v_j$ the \emph{parent} of $v_i$ and $v_i$ a \emph{child} of $v_j$.     

\begin{proof}[Proof of Proposition~\ref{trees upper}]

First, we consider the case where $\delta(G)\geq r$. We construct an $r$-percolating set $F\subseteq E(G\square T)$ for $G\square T$ as follows: Let $F_0$ be an optimal $r$-percolating set for $G_0$. For each $i\in [n-1]$, pick an optimal $(r-1)$-percolating set $F_i$ on $G_i$. Let $F=\bigcup_{i=0}^{n-1}F_i$. 

We claim that $F$ percolates in $G$, starting with the edges in $G_0$ and then spreading through those in $L_1,...,L_k$, in that order. Indeed, by our choice of $F_0$, after running the $r$-bond bootstrap percolation process on $G_0$, all edges in $G_0$ will be infected. Since $\delta(G)\geq r$, all edges between $G_0$ and $G_1$ will then become infected. Now, since each vertex in $G_1$ has an infected edge coming from $G_0$, this, together with $F_1$, will infect the edges of $G_1$. Again, since $\delta(G)\geq r$, we can then infect all edges between $G_1$ and each $G_i\in L_2$. Now, for each $i\in L_2$, every vertex $v_i\in V(G_i)$ has an infected edge coming from $G_1$, which, together with $F_i$, will infect all edges in $G_i$. Hence we can infect the edges in each $G_i\in L_2$. Continuing in this manner, we can infect $L_1,...,L_k$ in order. Hence $F$ percolates in $G\square T$, as required.

It remains to consider the case where $\delta(G)<r$. Let $F'$ be obtained from $F$ by adding the following edges: 
\begin{enumerate}[nosep]
    \item[(i)] For each $v\in V(G)$ with $\deg_G(v)=r-1$, add the edge $v_0v_1$ to the set $F$. 
    
    Note that adding this edge will guarantee that, once the edges in $G_0$ are infected, the infection will spread to those in $G_1$, and we can then continue as above. This adds an additional $d^G_{r-1}$ edges to the set $F$.
    \item[(ii)] Fix $t\in [r-1]$. For each $v\in V(G)$ such that $\deg_G(v)=r-1-t$, add the following edges to the set $F$: 
    \begin{itemize}[nosep]
        \item Add the edge $v_0v_1$.
        \item For each $a\in V(T)$ with $\deg_T(a)>t$, add $t$ edges from $v_a$ to its children. This adds an additional $t\sum_{i=t+1}^{\Delta(T)} d_{i}^T$ edges to $F$.%, for this specific vertex $v$ with $\deg_G(v)=r-1-t$.
        \item For each $i\in \{2,...,t\}$, and each vertex $a\in V(T)$ with $\deg_T(a)=i$, add all $i-1$ edges from $v_a$ to its children. This adds an additional $\sum_{i=2}^{t} (i-1) d_{i}^T$ edges to $F$.
    \end{itemize}

    Note that adding these edges will guarantee that, for each $0\leq a\leq n-1$, once the edges in $G_a$ are infected, the infection will spread to the remaining edges $v_av_b$ where $l(b)=l(a)+1$.
\end{enumerate} 
Now, in total, we added
\[d^G_{r-1} + \sum\limits_{t=1}^{r-1} d_{r-1-t}^G\left( 1+ t\sum\limits_{i=t+1}^{\Delta(T)} d_{i}^T + \sum\limits_{i=2}^{t} (i-1) d_{i}^T \right)\]
edges to the $m_e(G,r) + (n-1)m_e(G,r-1)$ edges initially in $F$. Therefore, since the resulting set $F'$ percolates in $G$, we have established the desired upper bound on $m_e(G\square T,r)$.
\end{proof}

We now prove the lower bound. 

\begin{proof}[Proof of \thref{trees lower}]

It suffices to show there exists a proper edge colouring $c^{\prime}$ of $G\square T$ such that 
\begin{equation}\label{eq:lb-tree}
\dim\left(W^r_{G\square T,c^\prime}\right) \geq \dim\left(W^r_{G,c}\right) + (n-1) \dim\left(W^{r-1}_{G,c}\right) +d_{r-1}^G+\sum\limits_{t=0}^{r-2} d^G_{t}\left(1+\sum\limits_{i=2}^{\Delta(T)}d_i^T\right) := N,
\end{equation}
as by \thref{HHQ ineq} we have $m_e(G\square T,r)\geq \dim(W^r_{G\square T,c'})$.

Let $\{\alpha_i:i\in [n-1]\}$ be distinct real numbers that do not belong to $c(E)$. Let $c^{\prime}$ be the proper edge-colouring of $G\square T$ defined as follows: Let $c'$ be consistent with $c$ on each of $G_0,G_1,...,G_{n-1}$. For each $v\in V(G)$, let $c^{\prime}(v_iv_j)=\alpha_i$ for all $i,j\in V(T)$ where $j$ is the parent of $i$ in $T$.

To prove \eqref{eq:lb-tree}, we find a set $Y$ of $N$ linearly independent vectors in $W^r_{G\square T,c^\prime}$. We begin by finding a set $A$ of $\dim\left(W^r_{G,c}\right) + (n-1) \dim\left(W^{r-1}_{G,c}\right)$ linearly independent vectors in $W^r_{G\square T,c^\prime}$.

Let $B^{(r)}$ be a basis for $W^r_{G,c}$. For each vector $\q=(q_v)_{v\in V(G)}$ in $B^{(r)}$, define the vector $\p_\q^{0}=(p_u)_{u\in V(G\square T)}$ so that $p_{v_i}=q_v$ for all $0\leq i\leq n-1$ and $v\in V(G)$. Trivially, the two conditions in \thref{W} are satisfied, and thus $\p_\q^{0}\in W^r_{G\square T,c^{\prime}}$. Let $A_0:=\{\p_\q^{0}:q\in B^{(r)}\}$. Note that the restriction of any vector $\p_\q^{0}\in A_0$ to $G_0$ equals $\q$. Therefore, since $B^{(r)}$ is a basis, the vectors in $A_0$ are linearly independent. 

Let $B^{(r-1)}$ be a basis for $W^{r-1}_{G,c}$. Fix $\ell\in [n-1]$. For each $\q=(q_v)_{v\in V(G)}$ in $B^{(r-1)}$, define the vector $\p_\q^{\ell}=(p_u)_{u\in V(G\square T)}$ such that, for each $v\in V(G)$, we have $p_{v_i}=(x-\alpha_\ell)q_v$ for all $i\in V(T_\ell)$, and $p_{v_i}\equiv 0$ for all other $i\in V(T)$. For each $v\in V(G)$, note that $\q\in W^{r-1}_{G,c}$ implies $\deg(q_v)\leq \min\{r-1,\deg_G(v)\} -1$. Therefore, for each $i\in V(T_\ell)$, we have  $\deg(p_{v_i}) = \deg(q_v) + 1 \le \min\{r,\deg(v_i)\}-1.$ Moreover, $\p_{v_\ell}(\alpha_\ell)=0=\p_{v_j}(\alpha_\ell)$, where $j$ is the parent of $\ell$ in $T$. Since $p_u(c'_{uv})=p_v(c'_{uv})$ for all other edges $uv\in E(G\square T)$, the conditions in \thref{W} are satisfied, and thus $\p_\q^{\ell}\in W^{r}_{G\square T,c^\prime}$. 

For each $\ell\in [n-1]$, define $A_\ell:=\{\p_\q^{\ell}:\q\in B^{(r-1)}\}$. Note that, for each $\ell\in [n-1]$, the restriction of any vector $\p_\q^{\ell}\in A_\ell$ to $G_\ell$ equals $(x-\alpha_\ell)\q$. Therefore, since $x-\alpha_\ell\not\equiv 0$ and $B^{(r-1)}$ is a basis, the vectors in $A_\ell$ are linearly independent. 

Define $A:=\bigcup_{i=0}^{n-1} A_i$. Note that $|A_0|=\dim(W^r_{G,c})$, and $|A_\ell|=\dim(W^{r-1}_{G,c})$ for each $\ell\in [n-1]$. Therefore, $|A|=\dim(W^r_{G,c})+ (n-1)\dim(W^{r-1}_{G,c})$.

\begin{claim}
\thlabel{claim:trees A ind}
    The vectors in $A$ are linearly independent. 
\end{claim}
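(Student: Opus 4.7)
The plan is to prove linear independence of the vectors in $A$ by restricting any hypothetical zero combination to the copies $G_i$ of $G$ and peeling off one family of coefficients at a time, processed in order of increasing level in $T$. The crucial structural observation is: for each $i\in V(T)$, the restriction of $\mathbf{p}_\mathbf{q}^0$ to $G_i$ equals $\mathbf{q}$, while the restriction of $\mathbf{p}_\mathbf{q}^\ell$ (for $\ell\in [n-1]$) to $G_i$ equals $(x-\alpha_\ell)\mathbf{q}$ if $i\in V(T_\ell)$ and is $\mathbf{0}$ otherwise. Since $T_\ell$ is the subtree rooted at $\ell$ (away from the root $0$), the condition $i\in V(T_\ell)$ is equivalent to saying $\ell=i$ or $\ell$ is a proper ancestor of $i$ in $T$.

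Starting from an arbitrary relation
\[\sum_{\mathbf{q}\in B^{(r)}} a_\mathbf{q}^0\,\mathbf{p}_\mathbf{q}^0 + \sum_{\ell=1}^{n-1}\sum_{\mathbf{q}\in B^{(r-1)}} a_\mathbf{q}^\ell\,\mathbf{p}_\mathbf{q}^\ell = \mathbf{0},\]
the first step would be to restrict to $G_0$. Since $0$ is the root, it belongs to no $V(T_\ell)$ with $\ell\neq 0$, so every $\mathbf{p}_\mathbf{q}^\ell$ with $\ell\in [n-1]$ vanishes on $G_0$. The restriction therefore reduces to $\sum_{\mathbf{q}\in B^{(r)}} a_\mathbf{q}^0\,\mathbf{q}=\mathbf{0}$, and linear independence of the basis $B^{(r)}$ of $W_{G,c}^r$ forces $a_\mathbf{q}^0=0$ for every $\mathbf{q}\in B^{(r)}$.

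I would then induct on the level $d\geq 1$. Assume all coefficients $a_\mathbf{q}^{\ell'}$ with $l(\ell')<d$ have been shown to vanish, and fix $\ell$ at level $d$. Restricting the remaining combination to $G_\ell$, the only surviving terms come from $A_\ell$ together with those $A_{\ell'}$ for which $\ell'$ is a proper ancestor of $\ell$; the ancestors lie at strictly smaller levels and so contribute nothing by the inductive hypothesis. The restriction reduces to
\[(x-\alpha_\ell)\sum_{\mathbf{q}\in B^{(r-1)}} a_\mathbf{q}^\ell\,\mathbf{q}=\mathbf{0}.\]
As $x-\alpha_\ell$ is a non-zero polynomial acting componentwise, each component of the inner sum must be zero, and linear independence of $B^{(r-1)}$ then forces $a_\mathbf{q}^\ell=0$ for all $\mathbf{q}\in B^{(r-1)}$. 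The induction closes and the claim follows.

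I do not expect a serious obstacle: once the support of each $\mathbf{p}_\mathbf{q}^\ell$ inside the product is correctly identified, the argument is a clean top-down peeling in which each copy $G_\ell$ is only ``touched'' by $A_0$, by $A_{\ell'}$ for ancestors $\ell'$ of $\ell$, and by $A_\ell$ itself. The only points requiring a little care are the precise description of the ancestors-and-root support (so that one is certain no family has been missed at a given level) and the fact that cancelling the common factor $(x-\alpha_\ell)$ is legitimate because it is a non-zero element of $\mathbb{R}[x]$.
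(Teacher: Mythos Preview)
Your proposal is correct and essentially matches the paper's proof. The paper phrases it as a minimal-counterexample argument using the given labeling $0,1,\dots,n-1$ (which is chosen so that $l(i)\le l(j)$ whenever $i<j$): it takes the smallest index $i$ with a nonzero coefficient in some $A_i$, observes that every $\mathbf p\in A_j$ with $j>i$ vanishes on $G_i$, and derives a contradiction from the linear independence of the restrictions of $A_i$ to $G_i$. Your level-by-level induction is the same peeling argument, just stated directly rather than by contradiction; your explicit identification of the support of $\mathbf p_{\mathbf q}^\ell$ as the ancestors of (and including) $\ell$ is exactly what makes the paper's ``$j>i$ implies vanishing on $G_i$'' step work.
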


\noindent\emph{Proof of \thref{claim:trees A ind}.} Suppose for a contradiction that there exists a linear combination $S=\sum_{\p\in A} \beta_\p\p = 0$, where $\beta_\p\neq0$ for some $\p\in A$. Write $S=\sum_{j=0}^{n-1}S_j$ where $S_j:=\sum_{\p\in A_j}\beta_\p\p$ for each $j\in V(T)$. Let $i$ be the smallest index for which there exists $\p\in A_i$ with $\beta_{\p}\neq 0$. For each $\p\in \bigcup_{j=i+1}^{n-1 }A_j$, by definition $p_{v_i}\equiv 0$ for all $v\in V(G)$. By our choice of $i$, $\beta_\p= 0$ for all $\p\in \bigcup_{j=0}^{i-1}A_j$. Thus the restriction of the sum $S$ to the coordinates corresponding to vertices in $G_i$ is equal to the restriction of $S_i$ to $G_i$. However, since the vectors in $A_i$ are linearly independent, this contradicts our assumption that $\beta_\p\neq 0$ for some $\p\in A_i$.$_{(\square)}$

To achieve the desired lower bound on $\dim(W^r_{G\square T,c^\prime})$, we must extend $A$ to include an additional $d_{r-1}^G+\sum_{t=0}^{r-2} d^G_{t}(1+\sum_{i=2}^{\Delta(T)}d_i^T)$ vectors, all of which are linearly independent. To this end, consider the set of vectors $Z_c\in \R^{|V(G)|}$ defined in \thref{Z}.

\begin{claim} 
\thlabel{trees claim}
For each non-zero $\p\in \vspan(A)$, there exists $\z\in Z_c$ such that $\p(\z)\neq 0$.
\end{claim}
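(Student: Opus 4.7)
The strategy is to use the linear-independence structure of $A$ (\thref{claim:trees A ind}) to isolate a single copy $G_i$ of $G$ on which the restriction of $\p$ has a transparent form, and then hand a suitable entry of that restriction to \thref{zeros lemma}.

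Fix $\p\in\vspan(A)\setminus\{0\}$. By \thref{claim:trees A ind}, there is a unique decomposition $\p=\sum_{j=0}^{n-1}\q^{(j)}$ with $\q^{(j)}\in\vspan(A_j)$; let $i\in\{0,1,\dots,n-1\}$ be the smallest index with $\q^{(i)}\neq 0$. The first step is to observe that $\p|_{G_i}=\q^{(i)}|_{G_i}$. Indeed, for $\ell\in[n-1]$ with $\ell\neq i$, the vector $\q^{(\ell)}$ vanishes on $G_i$ unless $i\in V(T_\ell)$, i.e.\ unless $\ell$ is a proper ancestor of $i$ in $T$; but any such $\ell$ satisfies $l(\ell)<l(i)$, which forces $\ell<i$ by the labelling convention, so $\q^{(\ell)}=0$ by minimality of $i$. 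The contribution of $\q^{(0)}$ on $G_i$ is $0$ when $i\ge 1$ (again by minimality), and it is exactly $\q^{(0)}|_{G_0}$ when $i=0$.

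Next, I extract a single non-zero entry and apply \thref{zeros lemma}. Writing $\q^{(i)}=\sum_{\q\in B^{(\star)}}\beta_\q\,\p_\q^i$ with $\star=r$ if $i=0$ and $\star=r-1$ if $i\ge 1$, the restriction to $G_i$ equals $\tilde\q$ (if $i=0$) or $(x-\alpha_i)\tilde\q$ (if $i\ge 1$), where $\tilde\q:=\sum_{\q\in B^{(\star)}}\beta_\q\q$ is a non-zero element of $W^{\star}_{G,c}$ by the linear independence of $B^{(\star)}$. I then pick any $u\in V(G)$ with $\tilde q_u\not\equiv 0$; from $\tilde\q\in W^{\star}_{G,c}$ one has $\deg(\tilde q_u)\le\min\{\star,\deg_G(u)\}-1\le\deg_G(u)-1$. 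Apply \thref{zeros lemma} with $q=\tilde q_u$ and $A=\emptyset$ (if $i=0$) or $A=\{\alpha_i\}$ (if $i\ge 1$); in the latter case $\alpha_i\notin c(E)$ by the choice of the $\alpha_i$. This produces a $\z\in Z_c$ with $\p(\z)\neq 0$, as required.

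The only step requiring genuine care is the collapse $\p|_{G_i}=\q^{(i)}|_{G_i}$: it relies on two independent facts, namely that $\p_\q^\ell$ is supported on $V(G)\times V(T_\ell)$, and that the level-monotone labelling of $T$ forces every proper ancestor of $i$ to have a strictly smaller index than $i$. Once this is in place, the rest of the argument is a direct unpacking of the definitions and a single invocation of \thref{zeros lemma}.
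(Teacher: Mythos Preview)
Your argument is correct and follows essentially the same approach as the paper: both isolate the minimal index $i$ on which the relevant contribution is non-zero, identify $\p|_{G_i}$ as either $\tilde\q$ or $(x-\alpha_i)\tilde\q$ for some non-zero $\tilde\q\in W^{\star}_{G,c}$, and then invoke \thref{zeros lemma}. Your version is simply more explicit about why only the $A_i$-component survives on $G_i$, spelling out the support structure of $\p_\q^\ell$ and the level-monotone labelling where the paper's proof leaves this implicit.
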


\noindent\emph{Proof of Claim:} Let $\p= (p_u)_{u\in V(G\square T)}$ be a non-zero vector in $\vspan(A)$. Let $j$ be the smallest index for which there exists some $v_j\in V(G_j)$ with $p_{v_j}\not\equiv 0$. If $j=0$, then by definition of the vectors in $A$, we can write $p_{v_0}=q_v$ for some vector $\q=(q_u)_{u\in V(G)}$ in $\vspan(B^{(r)})=W^{r}_{G,c}$. Otherwise, if $j\in [n-1]$, then we can write $p_{v_j}=q_v(x-\alpha_j)$ for some vector $\q=(q_u)_{u\in V(G)}$ in $\vspan(B^{(r-1)})=W^{r-1}_{G,c}$. In either case, by \thref{W}, $\deg(q_v)\leq \deg_G(v)-1$. Thus, since the colours in $\{\alpha_i:i\in [n-1]\}$ were chosen to be distinct from those in $c(E)$, by \thref{zeros lemma} there exists $\z\in Z_c$ such that $\p(\z)\neq 0$.~$_{(\square)}$

Therefore, in order to finish the proof of \eqref{eq:lb-tree}, it suffices to find a set $X$ containing $d_{r-1}^G+\sum_{t=0}^{r-2} d^G_{t}(1+\sum_{i=2}^{\Delta(T)}d_i^T)$ linearly independent vectors in $W^r_{G\square T, c^\prime}$ which evaluate to zero on all of $Z_c$. This, together with \thref{trees claim}, will ensure these vectors are independent from $A$. 

First, for each $v\in V(G)$ with $\deg_G(v)\leq r-1$, define the vector $\p^0_v=(p_u)_{u\in V(G\square T)}$ so that $p_{v_i}=\prod_{u\in N_G(v)}(x-c_{uv})$ for all $i\in V(T)$, and $p_u\equiv 0$ for all other $u\in V(G\square T)$. Note that $p_u(c'_{uv})=p_v(c'_{uv})$ for all edges $uv\in E(G\square T)$. Furthermore, for all $i\in V(T)$, since $\deg_G(v)\leq r-1$ and $\deg_G(v) \leq \deg(v_i)-1$, we have $\deg(p_{v_i})=\deg_G(v)\leq \min\{r,\deg(v_i)\}-1.$ Hence the conditions in \thref{W} are satisfied, and thus $\p^0_v\in W^r_{G\square T,c'}$. Moreover, $\p^0_v$ evaluates to $0$ on all of $Z_c$. Let $X_v:=\{\p^0_v\}$ for each $v\in V(G)$ with $\deg_G(v)\leq r-1$.

Now, let $v\in V(G)$ with $\deg_G(v)\leq r-2$. For each $i\in V(T)$ with $\deg_T(i)\geq 2$, define the vector $\p_v^i=(p_u)_{u\in V(G\square T)}$ as follows: Let $p_{v_i}=(x-\alpha_i)\prod_{u\in N_G(v)}(x-c_{uv})$. For each child $j$ of $i$ in $T$, let
$p_{v_\ell}=(\alpha_j-\alpha_i)\prod_{u\in N_G(v)}(x-c_{uv})$ for all $\ell\in V(T_j)$.
Finally, let $p_u\equiv 0$ for all other $u\in V(G\square T)$. See Figure \ref{fig:trees low extra} for an illustration of this construction when $i=2$.

Now, $\deg_T(i)\geq 2$ implies $\deg_G(v)\leq \deg(v_i)-2$. Therefore, since $\deg_G(v)\leq r-2$, we have that
$\deg(p_{v_i})=\deg_G(v)+1\leq \min\{r,\deg(v_i)\}-1$. 
In addition, for each child $j$ of $i$ in $T$, and each $\ell\in V(T_j)$, since $\deg_G(v)\leq \deg(v_\ell)-1$ and $\deg_G(v)\leq r-2$, we have
$\deg(p_{v_\ell})=\deg_G(v)\leq  \min\{r,\deg(v_\ell)\}-1$.
Now, $p_{v_i}(\alpha_i)=0=p_{v_a}(\alpha_i)$,
where $a$ is the parent of $i$ in $T$. Moreover, for every child $j$ of $i$ in $T$,
\vspace{-2.75mm}
\[p_{v_i}(\alpha_j)=(\alpha_j-\alpha_i)\prod\limits_{u\in N_G(v)}(\alpha_j-c_{uv})=p_{v_j}(\alpha_j).\]

\begin{figure}[H]
\centering
\begin{tikzpicture}[scale=1.2]

    \draw[gray]  (0,5) ellipse (1.5 and 0.9);
    \draw[gray]  (0,2.5) ellipse (1.5 and 0.9);
    \draw[gray]  (0,0) ellipse (1.5 and 0.9);
    \draw[gray]  (-3,-2.5) ellipse (1.5 and 0.9);
    \draw[gray]  (3,-2.5) ellipse (1.5 and 0.9);
    \draw[gray]  (-3,-5) ellipse (1.5 and 0.9);
    \draw[gray]  (3,-5) ellipse (1.5 and 0.9);

    \node [] (G0) at (0.3,4.5)[]{\textcolor{gray}{$G_0$}};
    \node [] (G1) at (0.3,2)[]{\textcolor{gray}{$G_1$}};
    \node [] (G2) at (0,-0.5)[]{\textcolor{gray}{$G_2$}};
    \node [] (G3) at (-2.7,-3)[]{\textcolor{gray}{$G_{3}$}};
    \node [] (G4) at (2.7,-3)[]{\textcolor{gray}{$G_{4}$}};
    \node [] (G5) at (-2.7,-5.5)[]{\textcolor{gray}{$G_{5}$}};
    \node [] (G6) at (2.7,-5.5)[]{\textcolor{gray}{$G_{6}$}};

    \node [] (a1) at (0.25,3.8)[]{$\alpha_1$};
    \node [] (a2) at (0.25,1.3)[]{$\alpha_{2}$};
    \node [] (a3) at (-1.5,-1.3)[]{$\alpha_{3}$};
    \node [] (a4) at (1.5,-1.3)[]{$\alpha_{4}$};
    \node [] (a5) at (-2.7,-3.7)[]{$\alpha_{5}$};
    \node [] (a6) at (2.7,-3.7)[]{$\alpha_{6}$};

    \node [] (p0) at (-0.7,5.25)[]{$p_{v_0}=0$};
    \node [] (p1) at (-0.7,2.75)[]{$p_{v_1}=0$};
    \node [] (p2) at (3.5,0.9)[]{$p_{v_2}=(x-\alpha_2)\prod\limits_{u\in N_G(v)}(x-c_{uv})$};
    \node [] (p3) at (-4.1,-0.9)[]{$p_{v_{3}}=(\alpha_3-\alpha_2)\prod\limits_{u\in N_G(v)}(x-c_{uv})$};
    \node [] (p4) at (4.1,-0.9)[]{$p_{v_{4}}=(\alpha_4-\alpha_2)\prod\limits_{u\in N_G(v)}(x-c_{uv})$};
    \node [] (p5) at (-3.8,-6.7)[]{$p_{v_{5}}=(\alpha_3-\alpha_2)\prod\limits_{u\in N_G(v)}(x-c_{uv})$};
    \node [] (p6) at (3.8,-6.7)[]{$p_{v_{6}}=(\alpha_4-\alpha_2)\prod\limits_{u\in N_G(v)}(x-c_{uv})$};

    \node [std] (0) at (0,5.25)[label=right: $v_0$]{};
    \node [std] (1) at (0,2.75)[label=right: $v_1$]{};
    \node [std] (2) at (0,0.25)[label=right: $v_2$]{};
    \node [std] (3) at (-3,-2.25)[label=right: $v_{3}$]{};
    \node [std] (4) at (3,-2.25)[label=left: $v_{4}$]{};
    \node [std] (5) at (-3,-4.75)[label=right: $v_{5}$]{};
    \node [std] (6) at (3,-4.75)[label=left: $v_{6}$]{};

    \draw [-stealth] (0.9,1) -- (0.1,0.4);
    \draw [-stealth] (1.9,-1) -- (2.95,-2.1);
    \draw [-stealth] (-6.45,-1) -- (-3.15,-2.15);
    \draw [-stealth] (-6.1,-6.3) -- (-3.15,-4.85);
    \draw [-stealth] (1.5,-6.3) -- (2.85,-4.85);
    
    \draw (0)--(1);
    \draw (1)--(2);
    \draw (2)--(3);
    \draw (2)--(4);
    \draw (3)--(5);
    \draw (4)--(6);

\end{tikzpicture}
\caption{The vector $\p_v^2$ defined for $v\in V(G)$ with $\deg_G(v)\leq r-2$.}
\label{fig:trees low extra}
\end{figure}
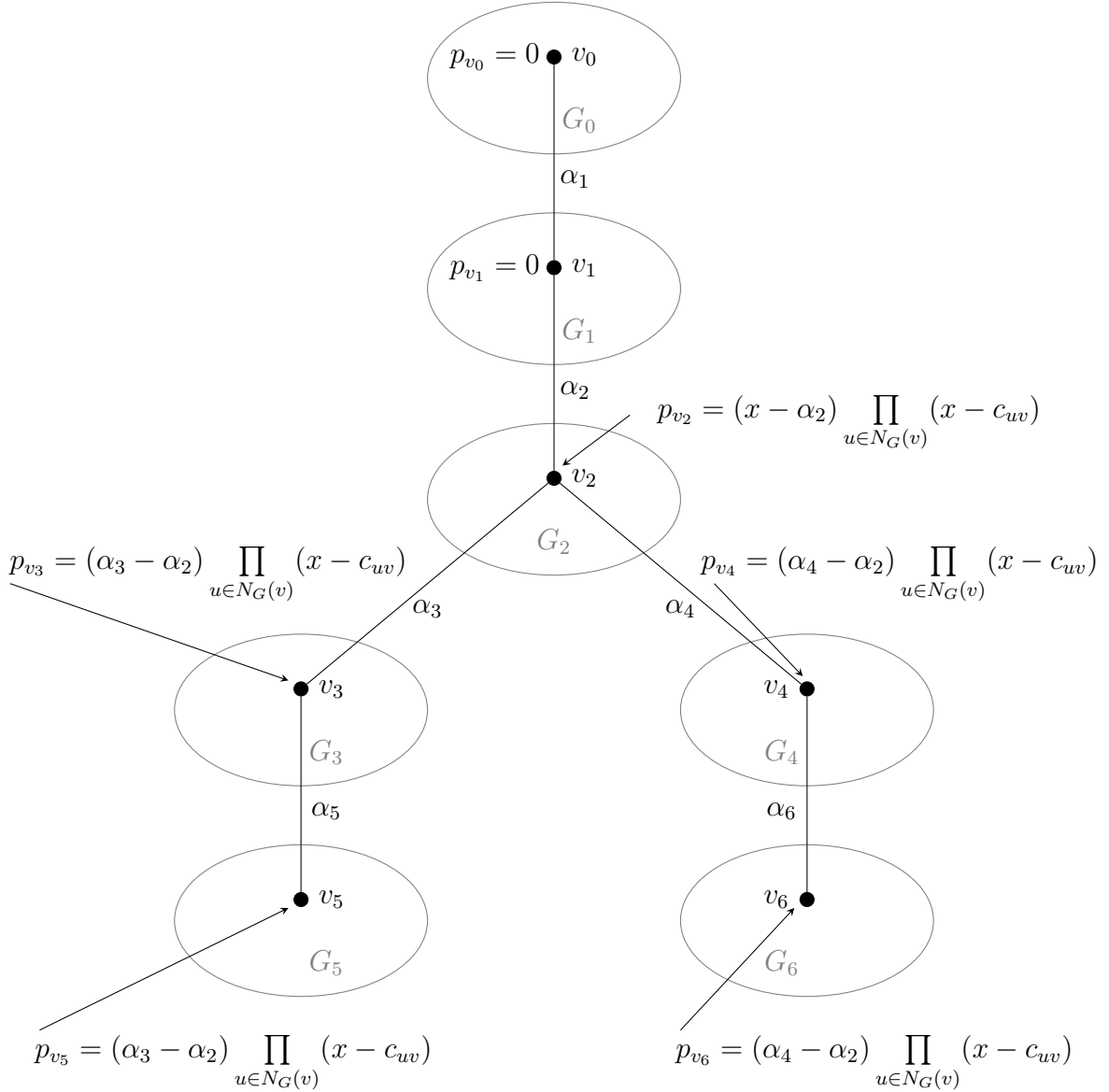

\noindent Since $p_u(c'_{uv})=p_v(c'_{uv})$ for all other $uv\in E(G\square T)$, the conditions in \thref{W} are satisfied, and thus $\p_v^i\in W_{G\square T,c'}^r$. Note that $\p_v^i$ also evaluates to zero on all of $Z_c$. For each $v\in V(G)$ with $\deg_G(v)\leq r-2$, and each $2\leq i\leq \Delta(T)$, add $\p_v^i$ to the set $X_v$.

Let $X:=\bigcup X_v$, where the union is taken over all sets $X_v$ where $\deg_G(v)\leq r-1$. Then 
\[|X|=\sum\limits_{t=0}^{r-1} d_{t}^G + \sum\limits_{t=0}^{r-2} d^G_{t}\sum\limits_{i=2}^{\Delta(T)}d_i^T=d_{r-1}^G+\sum\limits_{t=0}^{r-2} d^G_{t}\left(1+\sum\limits_{i=2}^{\Delta(T)}d_i^T\right).\] 

\begin{claim}
\thlabel{claim:trees X ind}
    The vectors in $X$ are linearly independent.
\end{claim}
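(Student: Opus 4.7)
The plan is to exploit the fact that each vector $\p \in X_v$ is supported entirely on the ``column'' $\{v_j : j \in V(T)\}$ of $G \square T$. Because these columns are disjoint for distinct $v \in V(G)$, any linear dependence among the vectors in $X = \bigcup_v X_v$ splits as a sum of dependences internal to each $X_v$, so the claim reduces to proving linear independence within each $X_v$ separately.

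Fix $v \in V(G)$ with $\deg_G(v) \leq r-1$ and write $P(x) := \prod_{u \in N_G(v)}(x - c_{uv})$, a non-zero polynomial of degree $\deg_G(v)$. If $\deg_G(v) = r-1$, then $X_v = \{\p_v^0\}$ and there is nothing to prove. Otherwise, suppose there is a relation $\beta_0 \p_v^0 + \sum_i \beta_i \p_v^i = 0$, where $i$ ranges over vertices of $T$ with $\deg_T(i) \geq 2$. First I would isolate $\beta_0$ by evaluating the combination at the coordinate $v_0$. Since $0$ is the leaf of $T$ chosen as the root, the support of each $\p_v^i$ (with $\deg_T(i) \geq 2$) consists of $v_i$ together with coordinates $v_\ell$ for $\ell$ a strict descendant of $i$; neither contains $v_0$. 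Hence the $v_0$-entry of the combination equals $\beta_0 P(x)$, and since $P(x) \not\equiv 0$ this forces $\beta_0 = 0$.

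To recover each remaining coefficient $\beta_i$, I would examine the entry at $v_i$. By construction, the $v_i$-coordinate of $\p_v^i$ is $(x - \alpha_i) P(x)$, a polynomial of degree $\deg_G(v) + 1$. Every other vector $\p_v^{i'}$ with $i' \neq i$ contributes either zero or a \emph{constant} multiple of $P(x)$ at coordinate $v_i$: indeed, $\p_v^{i'}$ carries the factor $(x - \alpha_{i'})$ only at its own coordinate $v_{i'}$, and at coordinates $v_\ell$ with $\ell \in V(T_j)$ for some child $j$ of $i'$ it equals the constant $(\alpha_j - \alpha_{i'})$ times $P(x)$. Consequently, the $v_i$-entry of the whole combination has the form $\beta_i (x - \alpha_i) P(x) + C_i \cdot P(x)$ for some constant $C_i$, and comparing the coefficient of $x^{\deg_G(v)+1}$ yields $\beta_i = 0$.

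I expect the main obstacle to be purely bookkeeping rather than conceptual: the crucial observation is that each $\p_v^i$ contains the factor $(x - \alpha_i)$ at exactly one coordinate (namely $v_i$), so a single degree comparison at the ``distinguished'' coordinate $v_i$ extracts $\beta_i$ without any need for induction on the structure of $T$. The only care required is tracking the support of each $\p_v^{i'}$ correctly, which follows directly from the definitions in terms of the rooted subtrees $T_j$.
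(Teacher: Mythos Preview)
Your proof is correct but proceeds differently from the paper's. Both arguments first reduce to showing that each $X_v$ is linearly independent, using that vectors in $X_v$ and $X_{w}$ for $v\neq w$ have disjoint supports. After that point the two proofs diverge. The paper exploits the BFS labelling of $V(T)$: it takes the \emph{smallest} index $i$ with $\gamma_i\neq 0$ and observes that, for every $j>i$ in the index set, the vector $\p_v^j$ is supported on $\{v_j\}\cup\{v_\ell:\ell\text{ a descendant of }j\}$, a set whose labels are all $\geq j>i$; hence the $v_i$-coordinate of the combination collapses to $\gamma_i\,\p_v^i|_{v_i}$, yielding a contradiction. This is a triangular argument in the ordering $\phi$, exactly parallel to the proof that $A$ is independent. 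Your argument instead ignores the ordering entirely and uses a \emph{degree} comparison: at the coordinate $v_i$, the vector $\p_v^i$ is the unique element of $X_v$ whose entry has degree $\deg_G(v)+1$ (all others contribute scalar multiples of $P(x)$, of degree $\deg_G(v)$), so reading off the top coefficient isolates $\beta_i$ immediately. Your route is slightly more self-contained, since it does not appeal to the level-respecting labelling of $T$; the paper's route has the virtue of reusing the same triangular mechanism already set up for \thref{claim:trees A ind}.
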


\noindent\emph{Proof of \thref{claim:trees X ind}.} Since each vector in $X$ is only non-zero in the entries corresponding to a unique vertex $v\in V(G)$, it suffices to show that the vectors in each $X_v$ are linearly independent. If $\deg_G(v)=r-1$, then $|X_v|=1$, and thus $X_v$ is clearly linearly independent. So we may assume $v\in V(G)$ with $\deg_G(v)=t$, where $t\leq r-2$.

Suppose for a contradiction that $S=\sum_{\p\in X_v}\gamma_{\p} \p=0$ is a non-trivial linear combination. For ease of notation, we write $\gamma_i$ for $\gamma_{\p_v^i}$. Let $i$ be the smallest index such that $\gamma_i\neq 0$. By definition, $p_{v_i}\not\equiv 0$ in $\p^i_{v}$, but $p_{v_i}\equiv 0$ for all $\p^j_{v}$ where $j>i$. Since $\gamma_j=0$ for all $j<i$, restricting the sum $S$ to the coordinate corresponding to $p_{v_i}$ gives $\gamma_i\p^i_v|_{v_i}\equiv 0$. Therefore, since $p_{v_i}\not\equiv 0$ in $\p^i_{v}$, it follows that $\gamma_i=0$, a contradiction. Hence the vectors in $X_v$ are linearly independent.~$_{(\square)}$ 

Let $Y=A\cup X$. Therefore, by \thref{claim:trees A ind,trees claim,claim:trees X ind}, we have found $|Y|=|A|+|X| =N$ linearly independent vectors in $W^r_{G\square T,c'}$, as required.
\end{proof}

It is easy to check that if $T$ is a path or $\delta(G)\geq r-2$ then the bounds in \thref{trees upper,trees lower} match, thus proving \thref{trees exact}. We finish this section by proving that these are the only conditions under which these bounds match. 

\begin{proof}[Proof of \thref{lem:trees exact}]
If the upper and lower bounds on $m_e(G\square T)$ given in \thref{trees upper,trees lower} match, since $m_e(G,i)=\dim(W^i_{G,c})$ for $r-1\leq i\leq r$, 
\[d^G_{r-1} + \sum\limits_{t=1}^{r-1} d_{r-1-t}^G\left( 1+ \sum\limits_{i=t+1}^{\Delta(T)} t d_{i}^T + \sum\limits_{i=2}^{t} (i-1) d_{i}^T \right) = d_{r-1}^G+\sum\limits_{t=0}^{r-2} d^G_{t}\left(1+\sum\limits_{i=2}^{\Delta(T)}d_i^T\right).\]
Cancelling like terms and reindexing the sums on the left hand side, it follows that
\begin{align*} \sum\limits_{t=0}^{r-2} d_{t}^G\left( 1+ \sum\limits_{i=r-t}^{\Delta(T)} (r-1-t) d_{i}^T + \sum\limits_{i=2}^{r-1-t} (i-1) d_{i}^T \right) = \sum\limits_{t=0}^{r-2} d^G_{t}\left(1+\sum\limits_{i=2}^{\Delta(T)}d_i^T\right).  %\label{eqtreesbig} 
\end{align*}
Therefore, for each $t\leq r-2$ such that $d_t^G>0$,   
\begin{equation}
  \sum\limits_{i=r-t}^{\Delta(T)} (r-1-t) d_{i}^T + \sum\limits_{i=2}^{r-1-t} (i-1) d_{i}^T = \sum\limits_{i=2}^{\Delta(T)}d_i^T. \label{eqtrees} 
\end{equation}
Fix $t\leq r-2$ such that $d_t^G>0$. From the first sum on the left side of \eqref{eqtrees}, if there exists $r-t\leq i\leq \Delta(T)$ such that $d_i^T>0$, then $r-1-t=1$. Therefore, since $d_{\Delta(T)}^T>0$, either
\begin{enumerate}[topsep=0pt]
    \item [(a)] $t=r-2$, or
    \item [(b)] $\Delta(T)< r-t$.
\end{enumerate}
From the second sum on the left side of \eqref{eqtrees}, if $r-1-t> 2$ then $d_i^T=0$ for all $3\leq i\leq r-1-t$. Therefore, either 
\begin{enumerate}[topsep=0pt]
    \item [(c)] $t\geq r-3$, or 
    \item [(d)] $T$ has no vertices of degree $3\leq i\leq r-1-t$. 
\end{enumerate}

Now, suppose for a contradiction that $\delta(G)<r-2$ and $T$ is not a path. Since $\delta(G)<r-2$, there exists some $t\leq r-3$ such that $d_t^G>0$. Note that condition (a) fails for this value of $t$, and thus condition (b) holds; that is, $\Delta(T)<r-t$. Now, since $T$ is not a path, $\Delta(T)\geq 3$. Hence $3<r-t$. Therefore, condition (c) fails for this value of $t$, and thus condition (d) holds; that is, $T$ has no vertices of degree $3\leq i\leq r-1-t$. However, $T$ contains at least one vertex of degree $\Delta(T)$, contradicting the fact that $3\leq \Delta(T)\leq r-1-t$. Therefore, if the upper and lower bounds on $m_e(G\square T)$ given in \thref{trees upper,trees lower} match, then either $\delta(G)\geq r-2$ or $T$ is a path.
\end{proof}

\subsection{Products of Stars}
\label{Subsec:Stars}

As mentioned in Section \ref{Sec:wsat poly intro}, when $T$ is a star, a more careful analysis allows the minimum degree condition of \thref{trees exact} to be dropped, providing the following recursive formula for $m_e(G\square S_k,r)$, where $S_k$ is a star with $k$ leaves. %Recall that, given a graph $G$, we denote by $d^G_t$ the number of vertices in $G$ of degree $t$. 

\begin{restatable}{thm}{GenStarsExact}
    \thlabel{Gen Stars exact}
Let $k,r\geq 1$ be integers, and let $G$ be a graph. Suppose there exists a proper edge-colouring $c$ of $G$ such that $m_e(G,i)=\dim(W^{i}_{G,c})$ for each $r-1\leq i\leq r$. Then
\vspace{-2mm}
\[m_e(G\square S_k,r) = m_e(G,r) + k m_e(G,r-1) + \sum\limits_{t=1}^{k-1} t d^G_{r-t} + k \sum\limits_{t=k}^{r} d^G_{r-t}.\]
\end{restatable}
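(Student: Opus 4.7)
The upper bound is immediate from \thref{trees upper} applied to $T = S_k$. For $k \geq 2$, the star has $d_1^T = k$, $d_k^T = 1$, and $d_i^T = 0$ otherwise; substituting these values turns the inner bracket of \thref{trees upper} into $1 + t$ for $1 \leq t \leq k-1$ and into $1 + (k-1)$ for $t \geq k$, so after reindexing the sum matches the claimed expression exactly. The case $k = 1$ is covered by \thref{trees exact}, since $S_1$ is a path.

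For the lower bound I run the polynomial argument of \thref{trees lower}, but with $S_k$ rooted at its centre (call it $0$) rather than at a leaf, with the leaves labelled $1, \ldots, k$. Let $c'$ extend $c$ on each copy of $G$ and assign a fresh colour $\alpha_\ell \notin c(E)$ to every edge $v_\ell v_0$ (for $v \in V(G)$ and $\ell \in [k]$). The standard set $A$ of \thref{trees lower} adapts directly: $A_0 = \{\p^0_\q : \q \in B^{(r)}\}$ with $p_{v_i} = q_v$ for all $i$ (contributing $\dim(W^r_{G,c}) = m_e(G,r)$ vectors); and, for each $\ell \in [k]$ and $\q \in B^{(r-1)}$, the vector with $p_{v_\ell} = (x - \alpha_\ell)\,q_v$ and $p_{v_i} = 0$ for $i \neq \ell$ (contributing $k\,m_e(G, r-1)$ vectors in total). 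The analogue of \thref{trees claim} is straightforward: if $\p \in \vspan(A)$ is non-zero, then restricting to $G_0$ or, if that restriction vanishes, to some $G_\ell$ with $\ell \geq 1$, allows one to apply \thref{zeros lemma} with $A = \emptyset$ or $A = \{\alpha_\ell\}$, respectively.

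The heart of the proof is enlarging the auxiliary set $X \subseteq W^r_{G \square S_k, c'}$ of vectors that evaluate to zero on all of $Z_c$, exploiting the fact that the centre has degree $k$ in $S_k$ and so can carry polynomials of degree up to $D + k - 1$, where $D := \deg_G(v)$. For each $v \in V(G)$ with $D \leq r-1$ and each integer $j$ with $0 \leq j \leq \min(r-1-D,\, k-1)$, I set $p_{u_i} = 0$ for all $u \neq v$ and
\[ p_{v_0} = \prod_{a=1}^{j}(x - \alpha_a) \prod_{u \in N_G(v)}(x - c_{uv}), \qquad p_{v_\ell} = \prod_{a=1}^{j}(\alpha_\ell - \alpha_a) \prod_{u \in N_G(v)}(x - c_{uv}) \text{ for } \ell \in [k]. \]
Call the resulting vector $\p^{(j)}_v$; for $j = 0$ this recovers the $\p^0_v$ of \thref{trees lower}, and for $\ell \leq j$ the constant $\prod_{a=1}^j(\alpha_\ell - \alpha_a)$ vanishes, so $p_{v_\ell} \equiv 0$. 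Checking the degree bounds $\deg(p_{v_0}) = D + j \leq \min\{r, D + k\} - 1$ and $\deg(p_{v_\ell}) \leq D \leq \min\{r, D+1\} - 1$, together with the identity $p_{v_0}(\alpha_\ell) = p_{v_\ell}(\alpha_\ell)$, confirms $\p^{(j)}_v \in W^r_{G \square S_k, c'}$, and the common factor $\prod_{u \in N_G(v)}(x - c_{uv})$ forces $\p^{(j)}_v(\z) = 0$ for every $\z \in Z_c$.

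Let $X$ be the collection of all such vectors. For fixed $v$, the polynomials $p_{v_0}$ have distinct degrees $D, D+1, \ldots, D + \min(r-1-D,k-1)$ as $j$ varies, so the $\p^{(j)}_v$ are linearly independent; vectors for distinct $v$ have disjoint support, so $X$ is independent overall. Combined with the analogue of \thref{trees claim}, $A \cup X$ is linearly independent in $W^r_{G \square S_k, c'}$. Counting,
\[ |X| = \sum_{D=0}^{r-1} d^G_D \cdot \min(r - D,\, k) = \sum_{t=1}^{k-1} t\, d^G_{r-t} + k \sum_{t=k}^{r} d^G_{r-t}, \]
and by \thref{HHQ ineq}, $m_e(G \square S_k, r) \geq \dim(W^r_{G \square S_k, c'}) \geq |A| + |X|$, matching the claimed formula. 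I expect the main technical obstacle to be finding a coupled definition of $p_{v_0}$ and the $p_{v_\ell}$ under which the centre polynomial can climb through all $\min(r-D, k)$ admissible degrees while consistency survives on every edge of $G \square S_k$; the symmetric constants $\prod_{a=1}^j(\alpha_\ell - \alpha_a)$ are precisely what make this possible, and they simultaneously encode the ``$p_{v_\ell} \equiv 0$ for $\ell \leq j$'' condition automatically.
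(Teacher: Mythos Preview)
Your argument is correct and follows the same overall architecture as the paper's Appendix~A: the set $A$ is identical, and you then supply an auxiliary family $X\subseteq W^r_{G\square S_k,c'}$ vanishing on $Z_c$ with the required cardinality. Deriving the upper bound by specialising \thref{trees upper} rather than via a fresh centre-rooted construction is a legitimate shortcut; the arithmetic you sketch does yield the stated formula.

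The only substantive difference is in the design of $X$. The paper splits into two cases: for $\deg_G(v)\le r-k$ it builds $k$ Lagrange-style vectors $\p_v^\ell$ supported on $\{v_0,v_\ell\}$, while for $\deg_G(v)=r-t$ with $1\le t\le k-1$ it uses $t$ vectors with $p_{v_0}=\prod_{i\in[t]\setminus\{\ell\}}(x-\alpha_i)\prod_{u\in N_G(v)}(x-c_{uv})$ and matching constants on the leaves, proving linear independence by isolating for each vector a coordinate on which all others vanish. Your single family $\p_v^{(j)}$, indexed by $0\le j\le\min(r-1-D,k-1)$, handles both regimes uniformly, and the degree staircase at $v_0$ gives linear independence in one line. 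This is a genuine simplification of the paper's construction at no cost in strength.
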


\vspace{-3mm}

As in the proof of \thref{trees exact}, we provide a construction for the upper bound in \thref{stars upper}, and use properties of polynomials to obtain the lower bound in \thref{stars lower}, which, together with \thref{HHQ ineq}, directly implies \thref{Gen Stars exact}. 

\begin{prop}
\thlabel{stars upper}
Let $r,k\geq 1$ be integers, and let $G$ be a graph. Then
\vspace{-2mm}
\[m_e(G\square S_k,r)\leq m_e(G,r) + km_e(G,r-1) + \sum\limits_{t=1}^{k-1} t d^G_{r-t} + k\sum\limits_{t=k}^r d^G_{r-t}.\]
\end{prop}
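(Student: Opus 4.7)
The plan is to construct an explicit $r$-percolating set $F \subseteq E(G \square S_k)$ whose size matches the right-hand side. I would label the $k+1$ copies of $G$ in $G \square S_k$ as $G_0, G_1, \ldots, G_k$, where $G_0$ corresponds to the centre of $S_k$ and $G_1, \ldots, G_k$ correspond to its leaves. The backbone of $F$ will consist of an optimal $r$-percolating set $F_0$ in $G_0$ together with an optimal $(r-1)$-percolating set $F_i$ in each leaf copy $G_i$ for $i \in [k]$, contributing $m_e(G,r) + k\, m_e(G,r-1)$ edges. On top of this backbone I would add connector edges of the form $v_0 v_i$ according to the value of $t := r - \deg_G(v)$: no connectors at $v$ when $t \leq 0$, any $t$ of the edges $v_0 v_1, \ldots, v_0 v_k$ when $1 \leq t \leq k-1$, and all $k$ such edges when $k \leq t \leq r$. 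Summing over the vertex degrees in $G$ gives exactly $\sum_{t=1}^{k-1} t\, d^G_{r-t} + k \sum_{t=k}^{r} d^G_{r-t}$ extra edges, matching the target size.

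The verification that $F$ percolates would proceed in three phases. In Phase 1, $F_0$ fully infects $E(G_0)$ by the choice of $F_0$. Phase 2 would argue that every connector edge $v_0 v_i$ becomes infected: when $t \leq 0$ the vertex $v_0$ already has $\deg_G(v) \geq r$ infected incident edges coming from $G_0$, which push every connector at $v$ past the threshold; when $1 \leq t \leq k-1$ the $r-t$ infected edges at $v_0$ from $G_0$ together with the $t$ added connectors bring $v_0$ up to exactly $r$ infected incident edges, triggering the remaining $k-t$ connectors; when $t \geq k$ all $k$ connectors at $v$ are already in the initial set. Phase 3 would then handle each leaf copy $G_i$: once Phase 2 is complete, every vertex $v_i$ has the infected edge $v_i v_0$ outside $G_i$, so an edge $u_i v_i$ of $G_i$ meets the infection threshold in $G \square S_k$ precisely when one of its endpoints has at least $r-1$ infected edges inside $G_i$. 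Thus the infection on $G_i$ starting from $F_i$ unfolds exactly as the $(r-1)$-bond bootstrap process on $G_i$, which infects all of $E(G_i)$ by the choice of $F_i$.

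The one delicate point, which also explains the split at $t = k$ in the counting, is the regime $\deg_G(v) < r - k$: there, no connector at $v$ can ever be infected dynamically, since even with all $k$ connectors infected $v_0$ has only $r - t + k < r$ infected incident edges, while $v_i$ has degree only $\deg_G(v) + 1 \leq r - k < r$ in $G \square S_k$. Such vertices must have all $k$ of their connectors placed directly in $F$, which is exactly the contribution of the second sum $k \sum_{t=k}^{r} d^G_{r-t}$. Once this case split is recognised, the remaining book-keeping is routine, and the boundary values $k = 1$, $k \geq r+1$, and $\deg_G(v) = 0$ can all be seen to be correctly absorbed by the formula.
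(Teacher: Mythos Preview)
Your proposal is correct and follows essentially the same approach as the paper: the paper also builds $F$ from an optimal $r$-percolating set on the centre copy $G_0$, optimal $(r-1)$-percolating sets on the leaf copies $G_1,\dots,G_k$, and the same pattern of connector edges $v_0v_i$ (namely $t$ connectors when $\deg_G(v)=r-t$ with $1\le t\le k-1$, and all $k$ connectors when $\deg_G(v)\le r-k$). Your three-phase verification is in fact slightly more explicit than the paper's, which handles the percolation argument more briefly.
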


\begin{prop}
\thlabel{stars lower}
Let $r,k\geq 1$ be integers. Let $c:E\rightarrow \R$ be a proper edge-colouring of a graph $G=(V,E)$. Then there exists a proper edge-colouring $c^{\prime}$ of $G\square S_k$ for which 
\vspace{-2mm}
\[\dim\left(W^r_{G\square S_k,c^\prime}\right) \geq \dim\left(W^r_{G,c}\right) + k \dim\left(W^{r-1}_{G,c}\right) + \sum\limits_{t=1}^{k-1}t d^G_{r-t} + k \sum\limits_{t=k}^r d^G_{r-t}.\]
\end{prop}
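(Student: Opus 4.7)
The plan is to mimic the proof of \thref{trees lower} verbatim wherever possible, and to replace only its final set of ``correction'' vectors by a richer family that exploits the single high-degree vertex of $S_k$. Root $S_k$ at a leaf $0$, so the centre is $1$ and the remaining leaves are $2,\dots,k$. Pick distinct colours $\alpha_1,\dots,\alpha_k\in\R\setminus c(E)$ and define $c'$ on $G\square S_k$ exactly as in \thref{trees lower}. The sets $A_0,A_1,\dots,A_k$ constructed there transfer word-for-word and yield $\dim(W^r_{G,c})+k\dim(W^{r-1}_{G,c})$ linearly independent vectors in $W^r_{G\square S_k,c'}$, and the analogue of \thref{trees claim} (whose proof uses only \thref{zeros lemma}) shows that no non-zero element of $\vspan(A)$ vanishes on $Z_c$.

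The new ingredient is an enriched set $X$ of vectors supported at individual low-degree vertices of $G$. Fix $v\in V(G)$ with $d:=\deg_G(v)\le r-1$, set $P_v(x):=\prod_{u\in N_G(v)}(x-c_{uv})$, and let $m:=\min\{k-1,\,r-1-d\}$. The key observation is that $v_1$ has degree $d+k$ in $G\square S_k$, so $p_{v_1}$ may carry a polynomial factor of degree up to $m$, whereas each leaf copy $v_\ell$ (of degree $d+1$) forces $p_{v_\ell}$ to be a constant multiple of $P_v$. For each $j\in\{0,\dots,m\}$, I would define $\p_v^{(j)}=(p_u)_{u\in V(G\square S_k)}$ by
\[
p_{v_1}(x)=x^j P_v(x),\quad p_{v_\ell}(x)=\alpha_\ell^j P_v(x)\text{ for }\ell\in\{2,\dots,k\},\quad p_{v_0}(x)=\alpha_1^j P_v(x),
\]
and $p_u\equiv 0$ at every other coordinate. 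Routine checks confirm $\p_v^{(j)}\in W^r_{G\square S_k,c'}$: the only non-trivial degree bound, $d+j\le\min\{r,d+k\}-1$ at $v_1$, follows from $d\le r-1$ and $j\le m$; the edge-matching condition on a spoke $v_iv_{i'}$ is immediate since substituting $x=\alpha_\ell$ into $x^jP_v(x)$ produces $\alpha_\ell^jP_v(\alpha_\ell)$; and on intra-copy edges both sides vanish because the opposite endpoint has $p\equiv 0$ and $c_{uv}$ is a root of $P_v$. Since every non-zero coordinate of $\p_v^{(j)}$ is a scalar multiple of $P_v$, the vector evaluates to $0$ identically on $Z_c$.

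Linear independence is then immediate: the $\p_v^{(j)}$ for distinct $v$ have disjoint supports, and for fixed $v$ the restrictions to the $v_1$-coordinate are $P_v(x)\cdot\{1,x,\dots,x^m\}$, which are linearly independent because $P_v\not\equiv 0$. Independence from $\vspan(A)$ is free, because every non-zero element of $\vspan(A)$ fails to vanish somewhere on $Z_c$ while the $\p_v^{(j)}$ vanish on $Z_c$ identically. Each $v$ with $d=\deg_G(v)\le r-1$ therefore contributes $m+1=\min\{k,r-d\}$ new vectors, and substituting $t=r-d$ converts the total $\sum_{d=0}^{r-1}d_d^G\min\{k,r-d\}$ into the claimed $\sum_{t=1}^{k-1}t\,d_{r-t}^G+k\sum_{t=k}^{r}d_{r-t}^G$. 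The only place where this genuinely improves on \thref{trees lower} is the use of the one-parameter family $\{x^j\}_{j\le m}$ at $v_1$ instead of just the constant $1$; this is exactly what dispenses with the hypothesis $\delta(G)\ge r-2$ from \thref{trees exact}, and I expect the verification of the degree bound at $v_1$ to be the most delicate point.
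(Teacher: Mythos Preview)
Your proof is correct. The set $A$ you import from \thref{trees lower}, with $S_k$ rooted at a leaf, is equivalent to the paper's (which instead roots at the centre and labels it $0$), and your invocation of \thref{zeros lemma} for the analogue of \thref{trees claim} goes through unchanged. The degree check at $v_1$ that you flag as delicate is fine: $d+j\le d+m\le\min\{d+k-1,\,r-1\}=\min\{r,\deg(v_1)\}-1$.

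Where you genuinely differ from the paper is in the correction set $X$. The paper splits into two cases according to whether $\deg_G(v)\le r-k$ or $\deg_G(v)=r-t$ for some $t\in[k-1]$: in the first case it builds $k$ Lagrange-type vectors, one per leaf $\ell\in[k]$, each supported on $\{v_0,v_\ell\}$; in the second it builds $t$ vectors $\p_v^\ell$ for $\ell\in[t]$, each carrying at the centre a product $\prod_{i\in[t]\setminus\{\ell\}}(x-\alpha_i)\cdot P_v(x)$ and matching constants at the remaining leaves. Your construction is more uniform: a single family $\{x^jP_v(x)\}_{j=0}^{m}$ at the centre with constants $\alpha_\ell^j P_v$ pushed out to the leaves, no case split needed, and linear independence read off instantly from the $v_1$-coordinate. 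This is tidier; the paper's version, by contrast, has the advantage that its correction vectors are indexed by precisely the extra edges added in the upper-bound construction of \thref{stars upper}, making the match between the two bounds more visibly one-to-one.
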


As the details of the proofs of \thref{stars upper,stars lower} are very similar in spirit to those of \thref{trees upper,trees lower}, respectively, we omit them here for the sake of brevity. The details can be found in the Appendix, or in Section 3.4 of \cite{thesis}.

\section{Products of Theta Graphs}
\label{Sec:Joined Cycles}

In this section, we establish bounds on $m_e(G\square H,r)$ when $H$ is a \emph{theta graph}.

\begin{defn}
\thlabel{joined cycles}
Let $k\geq \ell\geq 3$. Define $H_{k,\ell}$ to be the \emph{theta graph} obtained by joining two vertices by three internally vertex-disjoint paths of length $1$, $\ell-1$, and $k-1$. Note that $H_{k,\ell}$ contains two induced cycles, one of length $\ell$ and the one of length $k$.

Throughout Section \ref{Sec:Joined Cycles}, we label the vertices of $H_{k,\ell}$ as follows: Label the $\ell$ vertices on the induced cycle of length $\ell$ as $1,...,\ell$ in order so the two vertices of degree $3$ are labelled $1$ and $2$. Label the vertices on the induced cycle of length $k$ with the symbols $1^\prime,...,k^\prime$ in order, so that $1^\prime=1$ and $2^\prime=2$. See Figure \ref{fig:jcycle defn} for an illustration.
\end{defn}

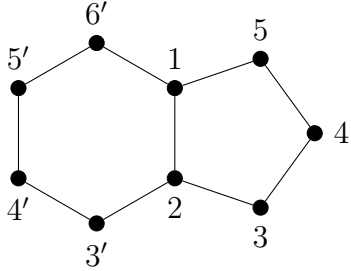
\begin{figure}[H]
\centering
\begin{tikzpicture}[scale=0.6]

    \node [std] (1) at (0,1)[label=above: {$1$}]{};
    \node [std] (2) at (0,-1)[label=below: {$2$}]{};
    \node [std] (3) at (1.9,-1.65)[label=below: $3$]{};
    \node [std] (4) at (3.1,0)[label=right: $4$]{};
    \node [std] (5) at (1.9,1.65)[label=above: $5$]{};
    \node [std] (3') at (-1.732,-2)[label=below: $3'$]{};
    \node [std] (4') at (-3.464,-1)[label=below: $4'$]{};
    \node [std] (5') at (-3.464,1)[label=above: $5'$]{};
    \node [std] (6') at (-1.732,2)[label=above: $6'$]{};

    \draw (1)--(2);
    \draw (2)--(3);
    \draw (3)--(4);
    \draw (4)--(5);
    \draw (5)--(1);
    \draw (2)--(3');
    \draw (3')--(4');
    \draw (4')--(5');
    \draw (5')--(6');
    \draw (6')--(1);
\end{tikzpicture}
\caption{The theta graph $H_{6,5}$}
\label{fig:jcycle defn}
\end{figure}

For ease of notation, let $[k^\prime]=\{1^\prime,...,k^\prime\}$. Given integers $k\geq \ell\geq 3$, define $I=I_{k,\ell}$ to be the set $[\ell]\cup[k^\prime]\setminus\{1^\prime,2^\prime\}$. Hence $V(H_{k,\ell})=I_{k,\ell}$. We impose an ordering $\phi$ on $\Z\cup[k']$ where $i<i'<i+1$ for all $i\in [k]$. Note that $I\subseteq \Z\cup[k']$, and thus $\phi$ is also an ordering on $I$.

Recall that, given a graph $G$, we denote by $d^G_t$ the number of vertices in $G$ of degree $t$. The main goal of this section is to prove the following recursive formula for $m_e(G\square H_{k,\ell},r)$.

\begin{restatable}{thm}{GenJCyclesExact}
    \thlabel{Gen joined cycles exact}
Let $k\geq \ell\geq 4$ and $r>1$ be integers. If there exists a proper edge-colouring $c$ of $G$ such that $m_e(G,i)=\dim(W^{i}_{G,c})$ for each $r-2\leq i\leq r$, then  
\begin{align*}
    m_e(G\square H_{k,\ell},r) &= m_e(G,r) + (k+\ell-5) m_e(G,r-1) + 2m_e(G,r-2) \\ &\hspace{4em} + d^G_{r-1} + (k+\ell-3)d^G_{r-2} + (k+\ell-1) \sum\limits_{i=0}^{r-3} d^G_i.
\end{align*}
\end{restatable}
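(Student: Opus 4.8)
The plan is to follow the same two-pronged strategy used for \thref{trees exact}: establish a matching upper bound via an explicit percolating set, and a matching lower bound via the polynomial method of \thref{HHQ ineq}, combined with \thref{zeros lemma}. That is, I will prove two propositions — an upper bound $m_e(G\square H_{k,\ell},r)\le (\text{RHS})$ coming from a construction, and a lower bound $\dim(W^r_{G\square H_{k,\ell},c'})\ge (\text{RHS})$ for a suitable proper edge-colouring $c'$ of $G\square H_{k,\ell}$ — and then note that, under the hypothesis $m_e(G,i)=\dim(W^i_{G,c})$ for $r-2\le i\le r$, the two bounds coincide and \thref{HHQ ineq} finishes the argument. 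The copies of $G$ in $G\square H_{k,\ell}$ will be denoted $G_1,\dots,G_\ell,G_{3'},\dots,G_{k'}$ (indexed by $I=I_{k,\ell}$), and for $v\in V(G)$ the corresponding vertex in $G_i$ is $v_i$, following the notation set up in Section~\ref{Sec:Joined Cycles}.

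\textbf{Upper bound.} For the construction, I would put an optimal $r$-percolating set on one copy, say $G_1$, and then propagate. The theta graph $H_{k,\ell}$ has exactly two vertices ($1$ and $2$) of degree $3$ and $k+\ell-4$ vertices of degree $2$; deleting $1$ leaves a path on the remaining $k+\ell-3$ vertices, which suggests: put an optimal $r$-percolating set on $G_1$, an optimal $(r-2)$-percolating set on two well-chosen copies (the "far" copies adjacent to the degree-$3$ vertices, giving the $2m_e(G,r-2)$ term), and optimal $(r-1)$-percolating sets on the remaining $k+\ell-5$ copies. The infection then spreads copy by copy along the two arcs: once a copy $G_i$ is fully infected, each vertex $v_i$ sends an infected edge into the neighbouring copy, so an $(r-1)$-percolating set there suffices — except at low-degree vertices of $G$, where extra "bridge" edges $v_iv_j$ must be seeded, exactly as in the tree construction; the terms $d^G_{r-1}+(k+\ell-3)d^G_{r-2}+(k+\ell-1)\sum_{i=0}^{r-3}d^G_i$ should come from carefully counting, for each low-degree vertex $v$ of $G$, how many of its $k+\ell-1$ incident edges inside $G\square H_{k,\ell}$ going to neighbouring copies need to be pre-infected (a vertex of $G$ of degree $t$ needs $r-t$ incident copy-to-copy edges to get going, and this interacts with the degree-$2$ versus degree-$3$ structure of $H_{k,\ell}$). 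I expect this to be a somewhat intricate but routine case analysis on $\deg_G(v)$ relative to $r$, $r-1$, $r-2$.

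\textbf{Lower bound.} This is the part I expect to be the main obstacle. I would define $c'$ to agree with $c$ on each copy $G_i$ and to assign a fresh colour $\alpha_e\notin c(E(G))$ to each "fibre" edge $v_iv_j$ coming from an edge $ij\in E(H_{k,\ell})$; because $H_{k,\ell}$ contains cycles (length $\ell$ and length $k$), these new colours cannot all be distinct — around each cycle one needs the edge-colouring of that cycle to be proper and, crucially, one must control how the polynomial constraints $p_{v_i}(\alpha_{ij})=p_{v_j}(\alpha_{ij})$ chain around a cycle. The natural building blocks for $W^r_{G\square H_{k,\ell},c'}$ are: (i) vectors lifted from $W^r_{G,c}$ constant across all copies (giving $m_e(G,r)=\dim W^r_{G,c}$); (ii) for each "internal" arc edge, vectors built from $W^{r-1}_{G,c}$ that vanish on one side of that edge (the degree of each such polynomial goes up by one, which is fine since those copies have $\deg(v_i)=\deg_G(v)+2$ for degree-$2$ vertices of $H$); (iii) two families built from $W^{r-2}_{G,c}$ supported near the degree-$3$ vertices, whose polynomials pick up \emph{two} extra linear factors and still satisfy the degree constraint only because those copies sit at a degree-$3$ vertex of $H$; and (iv) the "low-degree-vertex" family $\p^i_v$ supported on a single vertex $v$ of $G$, built as products $\prod_{u\in N_G(v)}(x-c_{uv})$ times suitable constants/linear factors along $H_{k,\ell}$, which evaluate to zero on all of $Z_c$ and hence are independent of families (i)–(iii) by \thref{zeros lemma} (via the hypothesis $\deg(q)\le\deg_G(v)-1$). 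The delicate points are: making the cycle-consistency of the $\alpha$-colours compatible with all the degree bounds (this is presumably why the statement needs $\ell\ge4$, not merely $\ell\ge3$ — a triangle would force a degree-$2$ copy to receive two factors and violate $\deg\le r-1$); organising the supports of families (ii) and (iii) so that a "smallest non-zero copy index" argument (in the ordering $\phi$ on $I$) shows linear independence, exactly as in \thref{claim:trees A ind}; and counting $|X|$ for the low-degree family to land precisely on $d^G_{r-1}+(k+\ell-3)d^G_{r-2}+(k+\ell-1)\sum_{i=0}^{r-3}d^G_i$, which will require tracking how many valid "shapes" of $\p^i_v$ exist as a function of $\deg_G(v)$ and the position of $i$ in $H_{k,\ell}$. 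Once all four families are produced and shown to be jointly linearly independent, their total count is the claimed RHS, completing the lower bound and hence the theorem.
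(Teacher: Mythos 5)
Your overall strategy is exactly the paper's: an upper bound by seeding $G_1$ with an $r$-percolating set, two distinguished copies with $(r-2)$-percolating sets, the remaining $k+\ell-5$ copies with $(r-1)$-percolating sets, plus bridge edges for low-degree vertices of $G$; and a lower bound by exhibiting linearly independent vectors in $W^r_{G\square H_{k,\ell},c'}$ built from bases of $W^r_{G,c}$, $W^{r-1}_{G,c}$, $W^{r-2}_{G,c}$ together with a family killed by $Z_c$ and separated off via \thref{zeros lemma}. However, the part you yourself flag as the main obstacle --- how the matching conditions chain around the two cycles --- is precisely where the sketch stops, and two of the guesses you make about how to get past it are wrong. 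First, the fresh colours \emph{can} all be distinct (the paper takes $\{\alpha_i : i\in I\cup\{2'\}\}$ pairwise distinct and disjoint from $c(E)$); a rainbow colouring of $H_{k,\ell}$ is proper, so the cycles cause no colouring problem at all. Second, the $W^{r-2}$-based families are not placed ``at a degree-$3$ vertex of $H$'': they live on the single copies $G_\ell$ and $G_{k'}$, whose indices have degree $2$ in $H_{k,\ell}$ (they are merely \emph{adjacent} to the degree-$3$ vertex $1$), with entries $(x-\alpha_{\ell-1})(x-\alpha_\ell)q_v$; the degree bound holds because $\deg(q_v)\le\min\{r-2,\deg_G(v)\}-1$ and $\deg(v_\ell)=\deg_G(v)+2$, not because of any degree-$3$ slack.

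The genuine missing idea is the shape of the $W^{r-1}$-based vectors. In the tree case one takes $(x-\alpha_\ell)q_v$ on the whole subtree $T_\ell$ and $0$ elsewhere; since $H_{k,\ell}$ is $2$-edge-connected there is no ``one side of an edge,'' and a vector supported on all of one arc would have to vanish at \emph{both} boundary colours, costing two degrees. The resolution is to support each such vector on just two consecutive copies, $p_{v_i}=\frac{x-\alpha_{i-1}}{\alpha_i-\alpha_{i-1}}q_v$ and $p_{v_{i+1}}=\frac{x-\alpha_{i+1}}{\alpha_i-\alpha_{i+1}}q_v$ (each entry vanishes at its outer boundary colour and the two agree at $\alpha_i$), with a three-copy variant at the degree-$3$ vertex $2$ supported on $G_2,G_3,G_{3'}$; this yields exactly $k+\ell-6+1=k+\ell-5$ families, and linear independence then follows from a smallest-index argument in the ordering $\phi$ as you anticipate. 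Without these explicit supports and normalisations (and the analogous, rather fiddly, two-copy constructions for the $X$-families attached to vertices of $G$ of degree $r-2$ and $\le r-3$, whose counts give the terms $(k+\ell-3)d^G_{r-2}$ and $(k+\ell-1)\sum_{i=0}^{r-3}d^G_i$), the lower bound is not yet a proof.
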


As in the case of \thref{trees exact}, we provide a construction to give the upper bound in \thref{joined cycles upper}, and use properties of polynomials to provide the lower bound in \thref{joined cycles lower}, which, together with \thref{HHQ ineq}, directly imply Theorem~\ref{Gen joined cycles exact}. 

\begin{prop}
\thlabel{joined cycles upper}
Let $r>1$ and $k\geq \ell\geq 4$ be integers. For any graph $G$, 
\begin{align*}
m_e(G\square H_{k,\ell},r)&\leq m_e(G,r) + (k+\ell-5) m_e(G,r-1) + 2 m_e(G,r-2) + d^G_{r-1} \\ &\hspace{4em} + (k+\ell-3)d^G_{r-2} + (k+\ell-1)\sum\limits_{t=0}^{r-3} d^G_{t}.
\end{align*}
\end{prop}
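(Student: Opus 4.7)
The plan is to construct an explicit $r$-percolating set $F\subseteq E(G\square H_{k,\ell})$ of the claimed size and verify that it percolates via a staged propagation argument. The construction has two layers. In the \emph{main layer} I place an optimal $r$-percolating set $F_1$ on $G_1$; optimal $(r-1)$-percolating sets $F_i$ on each of the $k+\ell-5$ copies $G_i$ with $i\in\{2,3,\ldots,\ell-1,3',\ldots,(k-1)'\}$; and optimal $(r-2)$-percolating sets $F_\ell,F_{k'}$ on the two copies furthest from $G_1$ along each arm. This contributes $m_e(G,r)+(k+\ell-5)m_e(G,r-1)+2m_e(G,r-2)$ edges. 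In the \emph{extras layer}, for each $v\in V(G)$ with $\deg_G(v)<r$, I add cross-edges in the copy $H_v$ of $H_{k,\ell}$ supported on $\{v_i:i\in V(H_{k,\ell})\}$, choosing them to be a minimum $(r-\deg_G(v))$-percolating set of $H_v$: the single edge $v_1v_2$ if $\deg_G(v)=r-1$; all $k+\ell-3$ arm edges of $H_v$ except $v_2v_3$ if $\deg_G(v)=r-2$; and all $k+\ell-1$ edges of $H_v$ if $\deg_G(v)\le r-3$. Summing yields exactly the extras $d^G_{r-1}+(k+\ell-3)d^G_{r-2}+(k+\ell-1)\sum_{t=0}^{r-3}d^G_t$, so $|F|$ matches the claimed bound.

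The percolation proof proceeds by stages, alternating between infecting a copy $G_i$ and infecting the cross-edges leaving it. After $F_1$ infects $G_1$, I first argue that $v_1v_2$ becomes infected for every $v\in V(G)$: if $\deg_G(v)\ge r$, then $v_1$ already has $r$ infected edges within $G_1$ and triggers; if $\deg_G(v)\in\{r-1\}$ or $\deg_G(v)\le r-3$, the edge $v_1v_2$ lies in the seed; if $\deg_G(v)=r-2$, then the two seeded cross-edges $v_\ell v_1,v_{k'}v_1$ together with the $r-2$ in-copy edges at $v_1$ push its count to $r$, triggering $v_1v_2$. At that point every vertex of $G_2$ carries a bonus infected cross-edge, so the $r$-bond process inside $G_2$ is effectively the $(r-1)$-bond process, and $F_2$ percolates $G_2$. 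Iterating this case analysis along both arms, the copies $G_3,G_4,\ldots,G_{\ell-1}$ and $G_{3'},\ldots,G_{(k-1)'}$ become infected in turn, with the cross-edges $v_iv_{i+1}$ (and their primed analogues) infecting between consecutive stages. Finally $G_\ell$ (symmetrically $G_{k'}$) receives bonuses from both $G_{\ell-1}$ and $G_1$, dropping its effective threshold to $r-2$; since $F_\ell$ is $(r-2)$-percolating, $G_\ell$ percolates and the full graph is infected.

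The main obstacle is synchronising the bootstrap inside each $H_v$ with the sequential infection of the $G_i$'s. For low-degree $v$, the extras form an $(r-\deg_G(v))$-percolating set of $H_v$, but that bootstrap on $H_v$ effectively ``turns on'' at a given $v_i$ only once $G_i$ has been fully infected and $v_i$ acquires its $\deg_G(v)$ in-copy infected edges. This is precisely what dictates the particular choice of extras when $\deg_G(v)=r-2$: omitting $v_2v_3$ (rather than, say, $v_1v_2$) retains both $v_\ell v_1$ and $v_{k'}v_1$ in the seed, which is what enables the very first trigger at $v_1$ just after $G_1$ infects, and also retains $v_2v_{3'}$ so that once $G_2$ infects, $v_2$ has the two bonus cross-edges needed to trigger the unseeded $v_2v_3$ and continue the cascade. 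Verifying each of these triggers by induction on the stage, with a case analysis on $\deg_G(v)$, completes the proof.
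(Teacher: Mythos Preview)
Your construction and staged percolation argument are correct and match the paper's proof almost exactly: the same main layer ($F_1$ an $r$-percolating set on $G_1$, $(r-1)$-percolating sets on the $k+\ell-5$ intermediate copies, $(r-2)$-percolating sets on $G_\ell$ and $G_{k'}$), and the same extras layer stratified by $\deg_G(v)$. The only difference is cosmetic: for vertices with $\deg_G(v)=r-2$ the paper omits the pair $\{v_1v_{k'},\,v_2v_3\}$ while you omit $\{v_1v_2,\,v_2v_3\}$; both choices leave two seeded cross-edges at $v_1$ (yours are $v_\ell v_1$ and $v_{k'}v_1$, the paper's are $v_1v_2$ and $v_\ell v_1$) so that $v_1$ reaches threshold $r$ once $G_1$ is infected, and both retain $v_2v_{3'}$ so that $v_2$ reaches threshold after $G_2$ is infected. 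Your case analysis for the trigger at each stage is the same as the paper's, and the edge count is identical.
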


\begin{prop}
\thlabel{joined cycles lower}
Let $r>1$ and $k\geq \ell\geq 4$ be integers, and let $c:E\rightarrow \R$ be a proper edge-colouring of a graph $G=(V,E)$. Then there exists a proper edge colouring $c^{\prime}$ of $G\square H_{k,\ell}$ for which 
\begin{align*}
    \dim\left(W^r_{G\square H_{k,\ell},c^\prime}\right) &\geq \dim\left(W^r_{G,c}\right) + (\ell+k-5) \dim\left(W^{r-1}_{G,c}\right) + 2 \dim\left(W^{r-2}_{G,c}\right) \\ &\hspace{4em}+ d^G_{r-1} + (\ell+k-3)d^G_{r-2} + (\ell+k-1)\sum\limits_{t=0}^{r-3} d^G_{t}.
\end{align*}
\end{prop}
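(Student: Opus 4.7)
The plan is to follow the strategy of the proof of \thref{trees lower}, with substantial modifications to accommodate the two cycles of $H_{k,\ell}$. First, define a proper edge-colouring $c'$ of $G\square H_{k,\ell}$ by keeping $c$ on each copy of $G$ and assigning $k+\ell-1$ distinct new colours $\{\beta_e : e \in E(H_{k,\ell})\}$ disjoint from $c(E)$ to the edges in the $H$-direction. Fix a spanning tree $T^*$ of $H_{k,\ell}$ rooted at vertex $1$, for example the one consisting of the direct edge $\{1,2\}$, the path $2\to 3\to\cdots\to \ell$, and the path $1\to k'\to\cdots\to 3'$. The two back edges that close the cycles are then $\{\ell,1\}$ and $\{2,3'\}$, and the leaves of $T^*$ are precisely $\ell$ and $3'$.

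Build the main set $A\subset W^r_{G\square H_{k,\ell},c'}$ of linearly independent vectors as follows. For each vector in a basis of $W^r_{G,c}$, extend it constantly over all copies of $G$, contributing $\dim(W^r_{G,c})$ vectors. For each of the $k+\ell-5$ non-root non-leaf vertices $i$ of $T^*$ and each vector $\q$ in a basis of $W^{r-1}_{G,c}$, place $(x-\beta_{\text{par}(i),i})q_v$ on the copies in the subtree $T^*_i$ and zero on the remaining copies, exactly as in the tree proof. For each leaf $i\in\{\ell,3'\}$ of $T^*$ and each $\q$ in a basis of $W^{r-2}_{G,c}$, place $(x-\beta_{\text{par}(i),i})(x-\beta_{i,\text{back}(i)})q_v$ on $G_i$ and zero on all other copies, where $\text{back}(i)$ denotes the back-edge neighbour. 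The double factor is needed because the vector's single non-zero coordinate must simultaneously vanish on both $H$-colours incident to $G_i$; a degree count confirms that $\q$ is then allowed to range over $W^{r-2}_{G,c}$. This yields $|A| = \dim(W^r_{G,c}) + (k+\ell-5)\dim(W^{r-1}_{G,c}) + 2\dim(W^{r-2}_{G,c})$, and the linear independence of $A$ follows from restricting any vanishing combination to the copy $G_i$ of smallest index in the $T^*$-order, as in \thref{trees lower}.

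To reach the target dimension, add a further set $X$ of vectors in $W^r_{G\square H_{k,\ell},c'}$ that evaluate to zero on all of $Z_c$. For each $v\in V(G)$ with $\deg_G(v)\leq r-1$, take the constant extension $\p^0_v$ with $\prod_{u\in N_G(v)}(x-c_{uv})$ on every copy. For each of the $k+\ell-4$ degree-$2$ vertices $i$ of $H_{k,\ell}$ and each $v$ with $\deg_G(v)\leq r-2$, construct a vector with $(x-\beta)\prod_u(x-c_{uv})$ on $G_i$ for an incident colour $\beta$, a constant multiple of $\prod_u(x-c_{uv})$ on the copies reached by propagation along $T^*$, and a single-factor correction on the leaf copy of $T^*$ at the far end (to absorb the incompatibility at the back edge). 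For each of the two degree-$3$ vertices $i\in\{1,2\}$ of $H_{k,\ell}$ and each $v$ with $\deg_G(v)\leq r-3$, construct a vector with $(x-\beta_{e_1})(x-\beta_{e_2})\prod_u(x-c_{uv})$ on $G_i$ (for two of the three colours incident to $i$), zero on the corresponding two neighbours, and constants propagating along the remaining cycle with the same back-edge correction. Summing over $v$ and $i$ gives $|X| = d^G_{r-1} + (k+\ell-3)d^G_{r-2} + (k+\ell-1)\sum_{t=0}^{r-3}d^G_t$.

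Finally, one verifies four properties: each vector in $X$ lies in $W^r_{G\square H_{k,\ell},c'}$ (by checking the degree bounds and the edge compatibility at every edge of $G\square H_{k,\ell}$); each vector in $X$ vanishes on every $\z\in Z_c$ (since every non-zero $p_{v_i}$ contains the factor $\prod_u(x-c_{uv})$); $X$ is linearly independent (because vectors indexed by distinct $v$ have disjoint support in the $V(G)$-direction, and for a fixed $v$ those indexed by different $i$ are distinguished by the smallest copy $G_j$ on which they are non-zero, as in \thref{trees lower}); and $A\cup X$ is linearly independent by combining the previous two items with \thref{zeros lemma}, which guarantees that any non-zero vector in $\vspan(A)$ is non-zero on some $\z\in Z_c$. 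The main obstacle will be the construction of the degree-$3$ vertex vectors in $X$: the polynomials chosen on the cycle copies must propagate compatibly all the way around and close at the back edge, and the strict bound $\deg_G(v)\leq r-3$ is precisely what allows a two-factor polynomial on the degree-$3$ copy and one-factor corrections on the neighbouring degree-$2$ copies to simultaneously fit within their degree constraints.
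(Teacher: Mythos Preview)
Your overall strategy and counts are correct, but there is a genuine gap in the construction of the set $A$. When you place $(x-\beta_{\text{par}(i),i})q_v$ on every copy in the subtree $T^*_i$ and zero elsewhere, the resulting vector is \emph{not} in $W^r_{G\square H_{k,\ell},c'}$: the subtree $T^*_i$ always contains one of the leaves $\ell$ or $3'$, and at the back edge incident to that leaf the polynomial does not vanish, while on the other side (outside $T^*_i$) it is identically zero. Worse, for $i=2$ the copy $G_2$ is itself an endpoint of the back edge $\{2,3'\}$, so there is a second incompatibility directly at $G_2$. Repairing this requires either replacing the polynomial on the leaf copy by a different linear-times-$q_v$ factor that kills the back-edge colour (and, for $i=2$, additionally extending the support to $G_{3'}$), or abandoning the subtree approach. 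The paper does the latter: its $W^{r-1}$ vectors are supported on only two adjacent copies $G_i,G_{i+1}$, carrying $\frac{x-\alpha_{i-1}}{\alpha_i-\alpha_{i-1}}q_v$ and $\frac{x-\alpha_{i+1}}{\alpha_i-\alpha_{i+1}}q_v$, which vanish on the two outward $H$-edges and agree on the shared one; the special index $i=2$ uses three copies $G_2,G_3,G_{3'}$.

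Your $X$ construction needs similar care. For the degree-$2$ vertices that are leaves of $T^*$ (namely $i=\ell$ and $i=3'$), your recipe forces both the factor $(x-\beta_{\text{par}(i),i})$ and the back-edge correction on the same copy $G_i$, giving degree $\deg_G(v)+2$ on a vertex $v_i$ of degree $\deg_G(v)+2$, which violates the bound in \thref{W}. For the degree-$3$ vertices, putting zero on two neighbours and constants along the remaining branch cannot close: the constant propagated to $G_{\ell-1}$ must match zero on $G_\ell$ at $\beta_{\ell-1,\ell}$, which it does not. The paper avoids these indices entirely in the $\deg_G(v)=r-2$ case, using instead a family indexed by $\{0\}\cup I\setminus\{\ell,k'\}$ that contains two special three-copy vectors at $i=1,2$ and a four-copy vector $\p^0_v$; for $\deg_G(v)\le r-3$ it uses $k+\ell-1$ two-copy vectors, one per edge of $H_{k,\ell}$.
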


Throughout this section, let $\{G_i:i\in I\}$ denote the $k+\ell-2$ copies of $G$ in $G\square H_{k,\ell}$ corresponding to the $k+\ell-2$ vertices in $H_{k,\ell}$. For each vertex $v\in V(G)$, denote its corresponding vertex in $G_i$ by $v_i$, where we sometimes refer to $v_i$ as $v_{i^\prime}$, for $i\in \{1,2\}$.

\begin{proof}[Proof of Proposition~\ref{joined cycles upper}]

First, we consider the case where $\delta(G)\geq r$. We construct an $r$-percolating set $F\subseteq E(G\square H_{k,\ell})$ for $G\square H_{k,\ell}$ as follows: Pick an optimal $r$-percolating set $F_1$ for $G_1$. For each $i\in I\setminus\{1,\ell,k^\prime\}$, pick an optimal $(r-1)$-percolating set $F_i$ on $G_i$. Finally, let $F_\ell$ and $F_{k^\prime}$ be optimal $(r-2)$-percolating sets on $G_\ell$ and $G_{k^\prime}$, respectively. Let $F=\bigcup_{i\in I} F_i$. See Figure \ref{fig:jcycle up} for an illustration of this construction.% when $k=6$ and $\ell=5$.

We claim that $F$ percolates in $G\square H_{k,\ell}$. By our choice of $F_1$, after running the $r$-bond bootstrap percolation process on $G_1$, all edges in $G_1$ will be infected. Since $\delta(G)\geq r$, all edges between $G_1$ and each of $G_2$, $G_\ell$, and $G_{k^\prime}$ will now become infected. As each vertex $v_2\in V(G_2)$ has an infected edge coming from $G_1$, this, together with $F_2$, will infect the edges in $G_2$. Again, since $\delta(G)\geq r$, all edges between $G_2$ and both $G_3$ and $G_{3^\prime}$ will now become infected. As each vertex $v_3\in V(G_3)$ has an infected edge coming from $G_2$, this, together with $F_3$, will infect the edges in $G_3$. Continuing in this manner, we can infect $G_3,...,G_{\ell-1}$, and similarly $G_{3^\prime},...G_{(k-1)^\prime}$. Finally, since $\delta(G)\geq r$, all edges between $G_{\ell-1}$ and $G_\ell$ will become infected. Since each vertex $v_\ell\in V(G_\ell)$ has one infected edge coming from $G_{\ell-1}$ and one from $G_1$, this, together with $F_\ell$, will infect the edges in $G_\ell$. A similar argument shows that the edges in $G_{k^\prime}$ will also become infected. Hence $F$ percolates in $G\square H_{k,\ell}$.

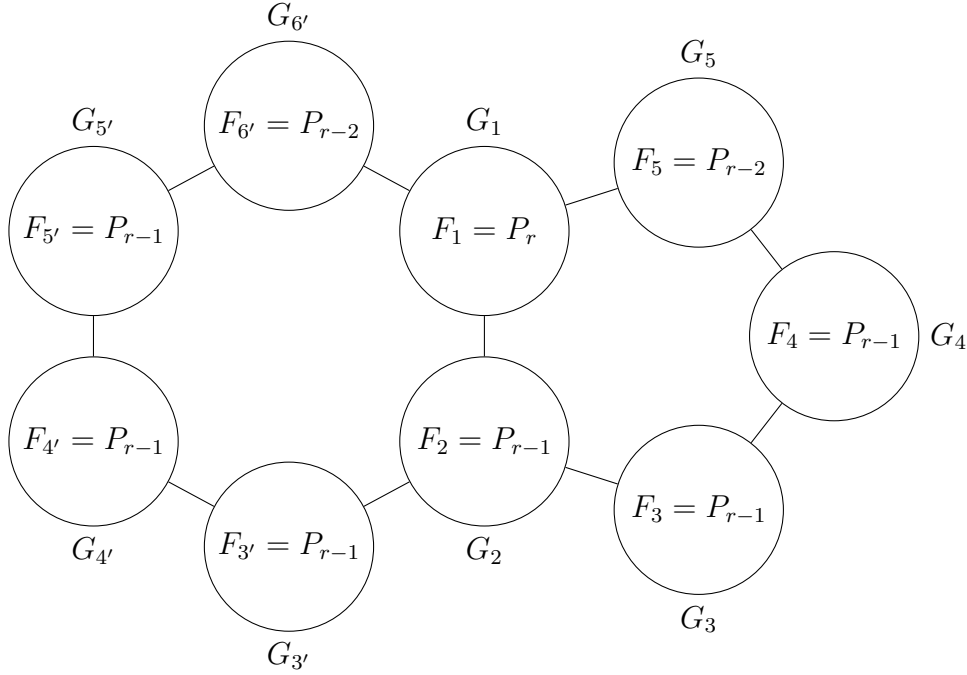
\begin{figure}[H]
\centering
\begin{tikzpicture}[xscale=1.5, yscale=1.4]
    \node [circ] (1) at (0,1)[label=above: {$G_1$}]{$F_1=P_r$};
    \node [circ] (2) at (0,-1)[label=below: {$G_2$}]{$F_2=P_{r-1}$};
    \node [circ] (3) at (1.9,-1.65)[label=below: $G_3$]{$F_3=P_{r-1}$};
    \node [circ] (4) at (3.1,0)[label=right: $G_4$]{$F_4=P_{r-1}$};
    \node [circ] (5) at (1.9,1.65)[label=above: $G_5$]{$F_5=P_{r-2}$};
    \node [circ] (3') at (-1.732,-2)[label=below: $G_{3'}$]{$F_{3'}=P_{r-1}$};
    \node [circ] (4') at (-3.464,-1)[label=below: $G_{4'}$]{$F_{4'}=P_{r-1}$};
    \node [circ] (5') at (-3.464,1)[label=above: $G_{5'}$]{$F_{5'}=P_{r-1}$};
    \node [circ] (6') at (-1.732,2)[label=above: $G_{6'}$]{$F_{6'}=P_{r-2}$};

    \draw (1)--(2);
    \draw (2)--(3);
    \draw (3)--(4);
    \draw (4)--(5);
    \draw (5)--(1);
    \draw (2)--(3');
    \draw (3')--(4');
    \draw (4')--(5');
    \draw (5')--(6');
    \draw (6')--(1);
\end{tikzpicture}
\caption[The $r$-percolating set $F$ for $G\square H_{6,5}$ when $\delta(G)\geq r$.]{The $r$-percolating set $F$ for $G\square H_{6,5}$ when $\delta(G)\geq r$. $P_i$ denotes an optimal $i$-percolating set on $G$.}
\label{fig:jcycle up}
\end{figure}

It remains to consider the case where $\delta(G)<r$. Let the set $F'$ be obtained from $F$ by adding the following edges. 
\begin{itemize}[nosep]
    \item For each $v\in V(G)$ with $\deg_G(v)=r-1$, add the edge $v_1v_2$ to the set $F$. Adding this edge will guarantee that, once $G_1$ is fully infected, the infection will spread to the edges incident with $v_2$ in $G_2$, after which the process continues as described above. Note that this adds an additional $d^G_{r-1}$ edges to the set $F$.
    \item For each $v\in V(G)$ with $\deg_G(v)=r-2$, add all edges $v_iv_j$, except $v_1v_{k^\prime}$ and $v_2v_3$, to the set $F$. Note that the edges $v_1v_{k^\prime}$ and $v_2v_3$ will become infected once the edges in $G_1$ and $G_2$ are infected, respectively, allowing the process to continue as described above. This adds an additional $(k+\ell-3)d^G_{r-2}$ edges to the set $F$. 
    \item For each $v\in V(G)$ with $\deg_G(v)\leq r-3$, add all edges $v_iv_j$ to the set $F$. This adds an additional $(k+\ell-1)\sum_{t=0}^{r-3} d^G_{t}$ edges to the set $F$.
\end{itemize} 

Altogether, we added $d^G_{r-1} + (k+\ell-3)d^G_{r-2} + (k+\ell-1)\sum_{t=0}^{r-3} d^G_{t}$ edges to the $m_e(G,r) + (k+\ell-5) m_e(G,r-1) + 2 m_e(G,r-2)$ edges initially in $F$. Since the resulting set $F$ percolates in $G$, we have established the desired upper bound on $m_e(G\square H_{k,\ell},r)$.  
\end{proof}

\begin{proof}[Proof of \thref{joined cycles lower}] 
 We refer to the copy of $G\square C_\ell$ in $G\square H_{k,\ell}$ as $C$, and the copy of $G\square C_k$ in $G\square H_{k,\ell}$ as $C^\prime$. Note that we take the indices in $C$ to be modulo $\ell$, and those in $C^\prime$ to be modulo $k$, where we perform addition on $[k^\prime]$ irrespective of the superscripts; for example, $3^\prime + 1 = (3+1)^\prime = 4^\prime$. 

Let $\{\alpha_i:i\in I\cup\{2^\prime\}\}$ be distinct real numbers not in $c(E)$. For convenience, we sometimes denote $\alpha_1$ by $\alpha_{1^\prime}$. Let $c^{\prime}$ be the proper edge-colouring of $G\square H_{k,\ell}$ defined as follows: Let $c^{\prime}$ be consistent with $c$ on each $G_i$, where $i\in I$. For each $v\in V(G)$ and $i\in [\ell]$, let $c^\prime(v_iv_{i+1})=\alpha_i$. For each $v\in V(G)$ and $i\in [k^\prime]\setminus\{1^\prime\}$, let $c^\prime(v_iv_{i+1})=\alpha_i$. See Figure \ref{JCycle} for an illustration of this edge-colouring of $G\square H_{k,\ell}$.

\begin{figure}[H]
\centering
\begin{tikzpicture}[scale=1.3]
    \node [circl] (1) at (0,1)[label=above: {$G_1$}]{$c$};
    \node [circl] (2) at (0,-1)[label=below: {$G_2$}]{$c$};
    \node [circl] (3) at (1.9,-1.65)[label=below: $G_3$]{$c$};
    \node [circl] (4) at (3.1,0)[label=right: $G_4$]{$c$};
    \node [circl] (5) at (1.9,1.65)[label=above: $G_5$]{$c$};
    \node [circl] (3') at (-1.732,-2)[label=below: $G_{3'}$]{$c$};
    \node [circl] (4') at (-3.464,-1)[label=below: $G_{4'}$]{$c$};
    \node [circl] (5') at (-3.464,1)[label=above: $G_{5'}$]{$c$};
    \node [circl] (6') at (-1.732,2)[label=above: $G_{6'}$]{$c$};

    \draw (1)--(2);
    \draw (2)--(3);
    \draw (3)--(4);
    \draw (4)--(5);
    \draw (5)--(1);
    \draw (2)--(3');
    \draw (3')--(4');
    \draw (4')--(5');
    \draw (5')--(6');
    \draw (6')--(1);

    \node [] (a1) at (0.2,0)[]{$\alpha_1$};
    \node [] (a2) at (1.05,-1.15)[]{$\alpha_2$};
    \node [] (a3) at (2.25,-0.75)[]{$\alpha_3$};
    \node [] (a4) at (2.25,0.75)[]{$\alpha_4$};
    \node [] (a5) at (1.05,1.15)[]{$\alpha_5$};
    \node [] (a2') at (-1,-1.35)[]{$\alpha_{2'}$};
    \node [] (a3') at (-2.5,-1.35)[]{$\alpha_{3'}$};
    \node [] (a4') at (-3.2,0)[]{$\alpha_{4'}$};
    \node [] (a5') at (-2.5,1.35)[]{$\alpha_{5'}$};
    \node [] (a6') at (-1,1.35)[]{$\alpha_{6'}$};
\end{tikzpicture}
\caption{The graph $G\square H_{6,5}$ and its edge colouring $c'$.}
\label{JCycle}
\end{figure}
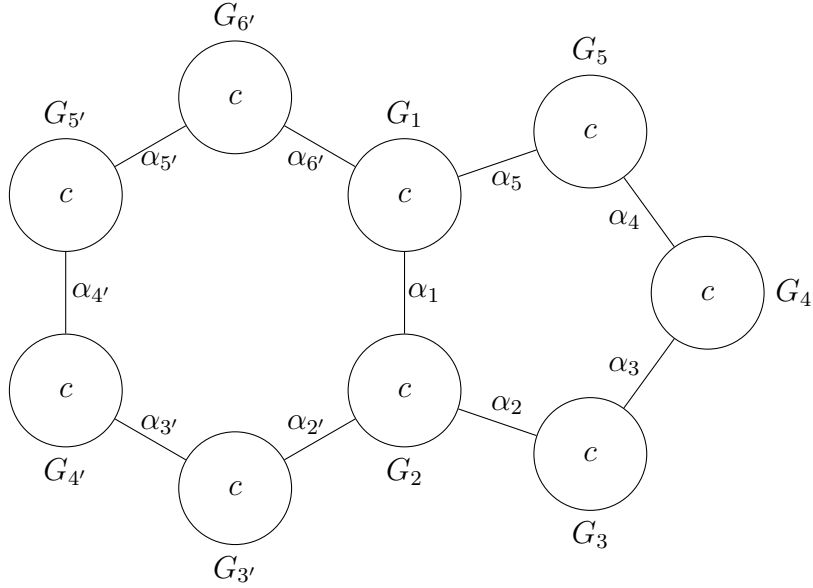

Let 
\begin{align*}
    N&:=\dim\left(W^r_{G,c}\right) + (\ell+k-5) \dim\left(W^{r-1}_{G,c}\right) + 2 \dim\left(W^{r-2}_{G,c}\right) + d^G_{r-1} \\ &\hspace{4em} + (\ell+k-3)d^G_{r-2}
    + (\ell+k-1)\sum\limits_{t=0}^{r-3} d^G_{t}.
\end{align*}
To prove this result, we will find a set of $N$ linearly independent vectors in $W^r_{G\square H_{k,\ell},c^\prime}$. We begin by finding a set $A$ of $\dim\left(W^r_{G,c}\right) + (\ell+k-5) \dim\left(W^{r-1}_{G,c}\right) + 2 \dim\left(W^{r-2}_{G,c}\right)$ linearly independent vectors in $W^r_{G\square H_{k,\ell},c^\prime}$.

First, consider a basis $B^{(r)}$ for $W^r_{G,c}$. For each vector $\q=(q_v)_{v\in V(G)}$ in $B^{(r)}$, define the vector $\p_\q^{1}=(p_u)_{u\in V(G\square H_{k,\ell})}$ so that $p_{v_i}=q_v$ for all $i\in I$ and $v\in V(G)$. Trivially, the two conditions in \thref{W} are satisfied; that is, $\deg(p_v)\leq \min\{r,\deg(v)\}-1$ for all vertices $v\in V(G\square H_{k,\ell})$, and $p_u(c_{uv})=p_v(c_{uv})$ for all edges $uv\in E(G\square H_{k,\ell})$. Hence $\p_\q^{1}\in W^r_{G\square H_{k,\ell},c^{\prime}}$. Let $A_1=\{\p_\q^{1}:q\in B^{(r)}\}$. Note that the restriction of any vector $\p_\q^{1}\in A$ to $G_1$ equals $\q$, and thus, since $B^{(r)}$ is a basis, the vectors in $A_1$ are linearly independent.

Now, consider a basis $B^{(r-1)}$ for $W^{r-1}_{G,c}$. First, fix $i\in I\setminus \{1,2,\ell,k^\prime\}$. For each vector $\q=(q_v)_{v\in V(G)}\in B^{(r-1)}$, define $\p_\q^{i}=(p_u)_{u\in V(G\square H_{k,\ell})}$ so that, for all $v\in V(G)$,
\[p_{v_i}=\frac{x-\alpha_{i-1}}{\alpha_i-\alpha_{i-1}} q_v, \hspace{0.5cm}p_{v_{i+1}}=\frac{x-\alpha_{i+1}}{\alpha_i-\alpha_{i+1}} q_v, \text{ and }\hspace{0.5cm} p_{v_j}\equiv 0 \text{ for all other } j\in I.\]
It is not difficult to check that $\p_\q^{i}\in W^{r}_{G\square H_{k,\ell},c^\prime}$. Now, for each $i\in I\setminus \{1,2,\ell,k^\prime\}$, define $A_i:=\{\p_\q^{i}:\q\in B^{(r-1)}\}$. Note that the restriction of any vector $\p_\q^{i}\in A_i$ to $G_i$ is $\frac{x-\alpha_{i-1}}{\alpha_i-\alpha_{i-1}} \q$. Therefore, since $x-\alpha_{i-1}\not\equiv 0$ and $B^{(r-1)}$ is a basis, the vectors in $A_i$ are linearly independent.

Now, for each $\q=(q_v)_{v\in V(G)}$ in $B^{(r-1)}$, define the vector $\p_\q^{2}=(p_u)_{u\in V(G\square H_{k,\ell})}$ so that, for all $v\in V(G)$, 
\[p_{v_3}=\frac{x-\alpha_3}{(\alpha_2-\alpha_3)(\alpha_{2^\prime}-\alpha_1)} q_v, \hspace{0.4cm}p_{v_2}=\frac{x-\alpha_1}{(\alpha_2-\alpha_1)(\alpha_{2^\prime}-\alpha_1)} q_v,\hspace{0.4cm} p_{v_{3^\prime}}=\frac{x-\alpha_{3^\prime}}{(\alpha_2-\alpha_1)(\alpha_{2^\prime}-\alpha_{3^\prime})} q_v,  \]
and $p_{v_i}\equiv 0$ for all other $i\in I$. It is not difficult to check that $\p_\q^{2}\in W^{r}_{G\square H_{k,\ell},c^\prime}$. We define $A_2:=\{\p_\q^{2}:\q\in B^{(r-1)}\}$. Note that the restriction of any vector $\p_\q^{2}\in A_2$ to $G_2$ is $\frac{x-\alpha_1}{(\alpha_2-\alpha_1)(\alpha_{2^\prime}-\alpha_1)} \q$. Therefore, since $x-\alpha_1\not\equiv 0$ and $B^{(r-1)}$ is a basis, the vectors in $A_2$ are linearly independent.

Finally, consider a basis $B^{(r-2)}$ for $W^{r-2}_{G,c}$. Fix $i\in \{\ell,k^\prime\}$. For each $\q=(q_v)_{v\in V(G)}$ in $B^{(r-2)}$, define the vector $\p_\q^{i}=(p_u)_{u\in V(G\square H_{k,\ell})}$ such that $p_{v_i}=(x-\alpha_{i-1})(x-\alpha_i)q_v$ for all $v$ in $V(G)$, and $p_{u}\equiv 0$ for all other $u\in V(G\square H_{k,\ell})$. It is straightforward to see that $\p_\q^{i}\in W^{r}_{G\square H_{k,\ell},c^\prime}$. For $i\in \{\ell,k^\prime\}$, let $A_i:=\{\p_\q^{i}:\q\in B^{(r-2)}\}$. Note that the restriction of any vector $\p_\q^{i}\in A_i$ to $G_i$ is $(x-\alpha_{i-1})(x-\alpha_i)\q$. Thus, since $(x-\alpha_{i-1})(x-\alpha_i)\not\equiv 0$ and $B^{(r-2)}$ is a basis, the vectors in $A_i$ are linearly independent.

Let $A:=\bigcup_{i\in I} A_i$. Then $|A|=\dim\left(W^r_{G,c}\right) + (\ell+k-5) \dim\left(W^{r-1}_{G,c}\right) + 2 \dim\left(W^{r-2}_{G,c}\right)$ as $|A_1|=\dim(W_{G,c}^r)$, $|A_i|=\dim(W_{G,c}^{r-1})$  for $i\in I\setminus \{1,\ell,k'\}$, and $|A_i|=\dim(W_{G,c}^{r-2})$ for $i\in \{\ell,k'\}$.

\begin{claim}
\thlabel{claim:jcycles A ind}
    The vectors in $A$ are linearly independent. 
\end{claim}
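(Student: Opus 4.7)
The plan is to establish linear independence of $A = \bigcup_{i \in I} A_i$ by extracting coefficients one family $A_j$ at a time, exploiting the sparse support pattern of the vectors constructed in the various $A_j$'s. For any hypothetical relation $S = \sum_{\p \in A}\beta_\p \p = 0$, I will restrict $S$ to a carefully chosen copy $G_i$ and deduce that a specific block of coefficients vanishes, in the spirit of \thref{claim:trees A ind}.

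First I catalogue, for each copy $G_i$, which families $A_j$ have nonzero entries on $G_i$. From the construction: $\p_\q^1$ touches every $G_i$; $\p_\q^2$ touches only $G_2, G_3, G_{3^\prime}$; $\p_\q^j$ for $j \in I \setminus \{1,2,\ell,k^\prime\}$ touches only $G_j$ and its cyclic successor $G_{j+1}$; and $\p_\q^\ell$ (respectively $\p_\q^{k^\prime}$) touches only $G_\ell$ (respectively $G_{k^\prime}$). Consequently $G_1$ is only touched by $A_1$; $G_2$ only by $A_1, A_2$; $G_i$ for $3 \leq i \leq \ell-1$ only by $A_1, A_{i-1}, A_i$; $G_\ell$ only by $A_1, A_{\ell-1}, A_\ell$; and similarly on the $k$-cycle side.

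I then proceed in the following order. At $G_1$ the restriction of $S$ involves only the $\p_\q^1$, whose restriction to $G_1$ equals $\q$; since $B^{(r)}$ is a basis, every $\beta_{\p_\q^1} = 0$. At $G_2$ only $A_1$ (now zero) and $A_2$ contribute; the restriction of $\p_\q^2$ to $G_2$ is a nonzero constant times $(x-\alpha_1)\q$, so $B^{(r-1)}$ being a basis forces $\beta_{\p_\q^2} = 0$. Iterating along the $\ell$-cycle in the order $G_3, G_4, \ldots, G_{\ell-1}$, at each step the only family whose coefficients are not yet known to vanish is the current $A_i$, whose restriction to $G_i$ is a nonzero constant times $(x-\alpha_{i-1})\q$; since $(x-\alpha_{i-1}) \not\equiv 0$ and $B^{(r-1)}$ is a basis, every $\beta_{\p_\q^i} = 0$. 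The same iteration along the $k$-cycle handles $i = 3^\prime, 4^\prime, \ldots, (k-1)^\prime$.

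Finally I handle the two ``cap'' copies $G_\ell$ and $G_{k^\prime}$, where after the previous steps only $A_\ell$ (respectively $A_{k^\prime}$) remains. The restriction of $\p_\q^\ell$ to $G_\ell$ is $(x-\alpha_{\ell-1})(x-\alpha_\ell)\q$; since this quadratic factor is not identically zero and $B^{(r-2)}$ is a basis, we obtain $\beta_{\p_\q^\ell} = 0$ for every $\q \in B^{(r-2)}$, and analogously $\beta_{\p_\q^{k^\prime}} = 0$. I do not foresee any real obstacle: the whole argument is an ordered cascade, each step a one-copy basis-independence check. The only subtlety is enumerating the support of each $A_j$ correctly, particularly confirming that the two cycles share only $G_1$ and $G_2$ (so $A_2$ propagates into both $G_3$ and $G_{3^\prime}$, yet no other $A_j$ spans the two cycles), and that $A_{\ell-1}$ and $A_{(k-1)^\prime}$ are cleared before we reach $G_\ell$ and $G_{k^\prime}$.
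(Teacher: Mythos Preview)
Your proposal is correct and takes essentially the same approach as the paper: both arguments cascade through the index set, restricting the putative relation to one copy $G_j$ at a time so that only the family $A_j$ survives, and then invoke the already-established linear independence of $A_j$. The only cosmetic difference is that the paper phrases this via a minimal-counterexample argument using the ordering $\phi$ (which interleaves the two cycles), whereas you traverse each cycle in turn; since the two cycles interact only through $A_1$ and $A_2$, which you clear first, either ordering works.
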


\noindent\emph{Proof of \thref{claim:jcycles A ind}.} Suppose for a contradiction that there exists a linear combination $S=\sum_{\p\in A} \beta_\p\p = 0$, where $\beta_\p\neq0$ for some $\p\in A$. Write $S=\sum_{i\in I}S_i$ where $S_i=\sum_{\p\in A_i} \beta_\p\p$ for each $i\in I$. Let $j\in I$ be the smallest index, relative to the ordering $\phi$, such that there exists $\p\in A_j$ with $\beta_\p\neq 0$. 

Suppose first that $j=1$. Then, for each $v\in V(G)$, we have $p_{v_1}\equiv 0$ for each vector $\p\in \bigcup_{i\in I\setminus\{1\}}A_i$ by definition, but $p_{v_1}\not\equiv 0$ for all non-zero $\p\in A_1$. Hence the restriction of the sum $S$ to the coordinates corresponding to vertices in $G_1$ is equal to the restriction of $S_1$ to $G_1$. However, since the vectors in $A_1$ are linearly independent, this contradicts our assumption that $\beta_\p\neq 0$ for some $\p\in A_1$. Hence $j\neq 1$, and thus $\beta_\p=0$ for all $\p\in A_1$. 

Next, assume $j=2$. Similarly to the previous case, for each $v\in V(G)$, we have $p_{v_2}\equiv 0$ for all $\p\in \bigcup_{i\in I\setminus\{1,2\}}A_i$, but $p_{v_2}\not\equiv 0$ for all non-zero $\p\in A_2$. Since $\beta_\p=0$ for all $\p\in A_1$, the restriction of $S$ to the coordinates corresponding to vertices in $G_2$ is equal to the restriction of $S_2$ to $G_2$. However, since the vectors in $A_2$ are linearly independent, this contradicts our assumption that $\beta_\p\neq 0$ for some $\p\in A_2$. Hence $j\neq 2$, and thus $\beta_\p=0$ for all $\p\in A_2$. 

Continuing in this manner, we find that $\beta_\p=0$ for all $\p\in A$, a contradiction. Therefore, the vectors in $A$ are linearly independent.~$_{(\square)}$

To achieve the desired lower bound on $\dim\left(W^r_{G\square H_{k\ell},c^\prime}\right)$, we must extend $A$ to include an additional $d^G_{r-1} + (\ell+k-3)d^G_{r-2} + (\ell+k-1)\sum_{t=0}^{r-3} d^G_{t}$ vectors, all of which are linearly independent. To this end, consider the set of vectors $Z_c\in \R^{|V(G)|}$ as defined in \thref{Z}.

\begin{claim}
\thlabel{jcycles claim}
For each non-zero $\p\in \vspan(A)$, there exists $\z\in Z_c$ such that $\p(\z)\neq 0$. 
\end{claim}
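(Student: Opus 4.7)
The plan is to follow the same strategy used in the proof of \thref{trees claim}, adapted to the branching structure of $H_{k,\ell}$. Given a non-zero $\p \in \vspan(A)$, write $\p = \sum_{i \in I} S_i$ with $S_i = \sum_{\p' \in A_i} \beta_{\p'} \p'$, and let $j \in I$ be the smallest index (relative to the ordering $\phi$) for which $\p|_{G_j} \not\equiv 0$. The goal is to show that $p_{v_j}$, for some $v \in V(G)$, has the exact form required by \thref{zeros lemma}.

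The preliminary step is to mimic the iterative argument from \thref{claim:jcycles A ind} to conclude that $\p|_{G_j} = S_j|_{G_j}$. Proceeding through indices in $\phi$-order and stopping just before $j$, one verifies at each $i <_\phi j$ that the only potential contributor to $\p|_{G_i}$ is $A_i$ itself: all other sources, in particular $A_1$ (whose vectors touch every copy) and the cyclic predecessor $A_{i-1}$, have already been zeroed out. Thus $\p|_{G_i} = S_i|_{G_i}$. Minimality of $j$ forces $\p|_{G_i} \equiv 0$, and the linear independence of the underlying basis of $W^{r}_{G,c}$, $W^{r-1}_{G,c}$, or $W^{r-2}_{G,c}$ (depending on which family $i$ belongs to) then forces $S_i = 0$ as a vector. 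After every $i <_\phi j$ has been processed, only $A_j$ remains as a source of contribution to $G_j$.

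It then suffices to perform a short case analysis on $j$. If $j = 1$, then $\p|_{G_1} = q^{(1)}$ for some non-zero $q^{(1)} \in W^r_{G,c}$, and \thref{zeros lemma} applies with $A' = \emptyset$ at any $v$ with $q^{(1)}_v \not\equiv 0$. If $j = 2$, then $\p|_{G_2} = \tfrac{x-\alpha_1}{(\alpha_2-\alpha_1)(\alpha_{2^\prime}-\alpha_1)} q^{(2)}$ with $q^{(2)} \in W^{r-1}_{G,c}$; apply \thref{zeros lemma} with $A' = \{\alpha_1\}$. If $j \in I \setminus \{1,2,\ell,k^\prime\}$, then $\p|_{G_j} = \tfrac{x-\alpha_{j-1}}{\alpha_j - \alpha_{j-1}} q^{(j)}$ with $q^{(j)} \in W^{r-1}_{G,c}$; apply \thref{zeros lemma} with $A' = \{\alpha_{j-1}\}$. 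If $j \in \{\ell, k^\prime\}$, then $\p|_{G_j} = (x-\alpha_{j-1})(x-\alpha_j) q^{(j)}$ with $q^{(j)} \in W^{r-2}_{G,c}$; apply \thref{zeros lemma} with $A' = \{\alpha_{j-1}, \alpha_j\}$. In every case the degree bound $\deg(q) \leq \deg_G(v) - 1$ follows from membership of $q$ in the appropriate $W^i_{G,c}$, and $A' \cap c(E) = \emptyset$ because each $\alpha_i$ was chosen outside of $c(E)$.

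The main obstacle is really only bookkeeping: one must verify that the ordering $\phi$ is genuinely compatible with the supports of the $A_i$, so that the stripping-away procedure at each step leaves only $A_j$. The branching of $H_{k,\ell}$ at the degree-$3$ vertices $1$ and $2$ forces these two indices to be treated as base cases, while the cycle closures at $\ell$ and $k^\prime$ necessitate the stronger double-root factor $(x-\alpha_{j-1})(x-\alpha_j)$; the intermediate indices on either of the two paths from $2$ to $\ell$ or $k^\prime$ fall uniformly into the generic case and require no additional work.
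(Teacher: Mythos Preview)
Your proposal is correct and follows essentially the same approach as the paper's proof: both pick the $\phi$-minimal index $j$ with $\p|_{G_j}\not\equiv 0$, identify the form of $p_{v_j}$ as a product of a vector in the appropriate $W^i_{G,c}$ with linear factors in the $\alpha$'s, and then invoke \thref{zeros lemma}. The only difference is that you spell out the iterative stripping argument (mirroring \thref{claim:jcycles A ind}) to justify $\p|_{G_j}=S_j|_{G_j}$, whereas the paper compresses this into the phrase ``by definition of $A$''; your case $j=2$ is also separated out while the paper absorbs the scalar $\tfrac{1}{(\alpha_2-\alpha_1)(\alpha_{2'}-\alpha_1)}$ into $q_v$ and handles it together with the generic case.
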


\noindent\emph{Proof of Claim:} Let $\p= (p_u)_{u\in V(G\square H_{k,\ell})}$ be a non-zero vector in $\vspan(A)$. Let $j$ be the smallest index, relative to the ordering $\phi$, for which there exists $v_j\in V(G_j)$ with $p_{v_j}\not\equiv 0$. By definition of $A$, if $j=1$, we can write $p_{v_1}=q_v$ for some $\q\in (q_u)_{u\in V(G)}$ in $\vspan(B^{(r)})=W^r_{G,c}$. Similarly, if $j\in I\setminus\{1,\ell,k'\}$, then we can write $p_{v_j}=q_v(x-\alpha_{j-1})$ for some $\q\in (q_u)_{u\in V(G)}$ in $\vspan(B^{(r-1)})=W^{r-1}_{G,c}$. Finally, if $j\in \{\ell,k'\}$, then we can write $p_{v_j}=q_v(x-\alpha_{j-1})(x-\alpha_j)$ for some $\q\in (q_u)_{u\in V(G)}$ in $\vspan(B^{(r-2)})=W^{r-2}_{G,c}$. In any case, by \thref{W}, we have $\deg(q_v)\leq \deg_G(v)-1$. Therefore, since the colours $\alpha_i$ were chosen to be distinct from those in $c(E)$, by \thref{zeros lemma}, there exists $\z\in Z_p$ such that $\p(\z)\neq 0$.~$_{(\square)}$

Therefore, in order to finish the proof of \thref{joined cycles lower}, it suffices to find a set $X$ of $d^G_{r-1} + (\ell+k-3)d^G_{r-2} + (\ell+k-1)\sum_{t=0}^{r-3} d^G_{t}$ linearly independent vectors in $W^r_{G\square H_{k,\ell}, c^\prime}$, each of which evaluates to zero on all of $Z_c$. This, together with \thref{jcycles claim}, will guarantee that these vectors are independent from those in $A$. We will define families of vectors in $W^r_{G\square H_{k,\ell},c^\prime}$ with the property that they each evaluate to $0$ on all of $Z_c$.

First, for each $v\in V(G)$ with $\deg_G(v)=r-1$, define $\p_v = (p_u)_{u\in V(G\square H_{k,\ell})}$ such that 
\vspace{-2mm}
\[p_{v_i}=\prod_{u\in N_G(v)} (x-c_{uv}) \text{ for all } i\in I,\]
and $p_u\equiv 0$ for all other $u\in V(G\square H_{k,\ell})$. 
Define $X_v:=\{\p_v\}$. Let $X_1:=\bigcup X_v$, where the union is over all $v\in V(G)$ such that $\deg_G(v)=r-1$. Note that $|X_1|=d^G_{r-1}$.

Now, let $v\in V(G)$ with $\deg_G(v)=r-2$. First, for each $i\in I\setminus \{1,2,\ell,k^\prime\}$, define the vector $\p_v^{i}= (p_u)_{u\in V(G\square H_{k,\ell})}$ as follows: 
\begin{itemize}[nosep]
    \item Let $p_{v_i}$ be the unique polynomial of degree $\deg(v_i)-1$ that evaluates to $1$ at $\alpha_i$ and evaluates to zero at all $\deg(v_i)-1$ other colours of the edges incident with $v_i$ in $G\square H_{k,\ell}$. That is, $p_{v_i}=\beta_i\prod_{u\in N(v_i)\setminus\{v_{i+1}\}} (x-c'_{v_iu})$, where $\beta_i$ is a non-zero constant chosen so that $p_{v_i}(\alpha_i)=1$.
    \item Let $p_{v_{i+1}}$ be the unique polynomial of degree $\deg(v_{i+1})-1$ that evaluates to $1$ at $\alpha_i$ and evaluates to zero at all $\deg(v_{i+1})-1$ other colours of the edges incident with $v_{i+1}$ in $G\square H_{k,\ell}$. That is, $p_{v_{i+1}}=\beta_{i+1}\prod_{u\in N(v_{i+1})\setminus\{v_{i}\}} (x-c'_{v_{i+1}u})$, where $\beta_{i+1}$ is a non-zero constant chosen so that $p_{v_{i+1}}(\alpha_i)=1$.
    \item Let $p_u\equiv 0$ for all $u\notin\{v_i,v_{i+1}\}$. 
\end{itemize}

Next, define $\p_v^{2}= (p_u)_{u\in V(G\square H_{k,\ell})}$ so that $p_u\equiv 0$ for all $u\notin\{v_2,v_3,v_{3^\prime}\}$, 
\vspace{-2mm}
\[p_{v_2}=(x-\alpha_1)\!\!\!\!\!\!\prod_{u\in N_G(v)}\!\!\!\!\! (x-c_{uv}),\  p_{v_3}=\beta(x-\alpha_3)\!\!\!\!\!\!\prod_{u\in N_G(v)} \!\!\!\!\!(x-c_{uv}),\text{ and } p_{v_{3^\prime}}=\beta^\prime(x-\alpha_{3^\prime})\!\!\!\!\!\!\prod_{u\in N_G(v)} \!\!\!\!\!(x-c_{uv}),\]
\noindent where $\beta$ is a non-zero constant chosen so that $p_{v_3}(\alpha_2) = p_{v_2}(\alpha_2)$, 
and $\beta^\prime$ is a non-zero constant chosen so that $p_{v_{3^\prime}}(\alpha_{2^\prime}) = p_{v_2}(\alpha_{2^\prime})$. Similarly, define $\p_v^{1}= (p_u)_{u\in V(G\square H_{k,\ell})}$ so that $p_u\equiv 0$ for all $u\notin\{v_1,v_\ell,v_{k^\prime}\}$,
\vspace{-2mm}
\[p_{v_1}=(x-\alpha_1)\!\!\!\!\!\!\prod_{u\in N_G(v)} \!\!\!\!(x-c_{uv}), \ p_{v_\ell}=\beta(x-\alpha_{\ell-1})\!\!\!\!\!\!\prod_{u\in N_G(v)} \!\!\!\!(x-c_{uv}), \text{ and } p_{v_{k^\prime}}=\beta^\prime(x-\alpha_{(k-1)^\prime})\!\!\!\!\!\!\prod_{u\in N_G(v)} \!\!\!\!(x-c_{uv}),\]
where $\beta$ is a non-zero constant chosen so $p_{v_\ell}(\alpha_\ell) = p_{v_1}(\alpha_\ell)$, 
and $\beta^\prime$ is a non-zero constant chosen so $p_{v_{k^\prime}}(\alpha_{k^\prime}) = p_{v_1}(\alpha_{k^\prime})$. 

Finally, define $\p_v^{0}= (p_u)_{u\in V(G\square H_{k,\ell})}$ so that $p_u\equiv 0$ for all $u\notin\{v_1,v_2,v_{3^\prime},v_{k^\prime}\}$,
\vspace{-2mm}
\[p_{v_1}=\frac{x-\alpha_{\ell}}{\alpha_1-\alpha_{\ell}}\prod\limits_{u\in N_G(v)} (x-c_{uv}), \ \ p_{v_2}=\frac{x-\alpha_{2}}{\alpha_1-\alpha_{2}}\prod\limits_{u\in N_G(v)} (x-c_{uv}),\]
\[p_{v_{3^\prime}}=\beta(x-\alpha_{3^\prime})\prod\limits_{u\in N_G(v)} (x-c_{uv}), \ \text{ and } \ p_{v_{k^\prime}}=\beta^\prime(x-\alpha_{(k-1)^\prime})\prod\limits_{u\in N_G(v)} (x-c_{uv}),\]
where $\beta$ is a non-zero constant chosen so $p_{v_{3^\prime}}(\alpha_{2^\prime}) = p_{v_2}(\alpha_{2^\prime})$, and 
$\beta^\prime$ is a non-zero constant chosen so $p_{v_{k^\prime}}(\alpha_{k^\prime}) = p_{v_1}(\alpha_{k^\prime})$.

Now, for each $v\in V(G)$ with $\deg_G(v)=r-2$, we define $X_v:=\{\p_v^i: i\in \{0\}\cup I\setminus \{\ell,k^\prime\}\}$. Let $X_2:=\bigcup X_v$, where the union is taken over all $v\in V(G)$ with $\deg_G(v)=r-2$. Note that, since $|I\setminus \{\ell,k^\prime\}|=k+\ell-3$, we have $|X_2|=(\ell+k-3)d_{r-2}^G$.

Now suppose $v\in V(G)$ with $\deg_G(v)\leq r-3$. First, for each $i\in [\ell]$, define the vector $\p_v^{i}= (p_u)_{u\in V(G\square H_{k,\ell})}$ as follows: 
\begin{itemize}[nosep]
    \item Let $p_{v_i}$ be the unique polynomial of degree $\deg(v_i)-1$ that evaluates to $1$ at $\alpha_i$ and evaluates to zero on all $\deg(v_i)-1$ other colours of the edges incident with $v_i$ in $G\square H_{k,\ell}$. That is, $p_{v_i}=\beta_i\prod_{u\in N(v_i)\setminus\{v_{i+1}\}} (x-c'_{v_iu})$, where $\beta_i$ is a non-zero constant chosen so that $p_{v_i}(\alpha_i)=1$. 
    \item Let $p_{v_{i+1}}$ be the unique polynomial of degree $\deg(v_{i+1})-1$ that evaluates to $1$ at $\alpha_i$ and evaluates to zero on all $\deg(v_{i+1})-1$ other colours of the edges incident with $v_{i+1}$ in $G\square H_{k,\ell}$. That is, $p_{v_{i+1}}=\beta_{i+1}\prod_{u\in N(v_{i+1})\setminus\{v_{i}\}} (x-c'_{v_{i+1}u})$, where $\beta_{i+1}$ is a non-zero constant chosen so that $p_{v_{i+1}}(\alpha_{i})=1$.
    \item Let $p_u\equiv 0$ for all $u\notin\{v_i,v_{i+1}\}$.
\end{itemize}

Similarly, for $i\in [k^\prime]\setminus \{1^\prime\}$, define the vector $\p_v^{i}= (p_u)_{u\in V(G\square H_{k,\ell})}$ as follows: 
\begin{itemize}[nosep]
    \item Let $p_{v_i}$ be the unique polynomial of degree $\deg(v_i)-1$ that evaluates to $1$ at $\alpha_i$ and evaluates to zero on all $\deg(v_i)-1$ other colours of the edges incident with $v_i$ in $G\square H_{k,\ell}$. 
    \item Let $p_{v_{i+1}}$ be the unique polynomial of degree $\deg(v_{i+1})-1$ that evaluates to $1$ at $\alpha_i$ and to zero on all $\deg(v_{i+1})-1$ other colours of the edges incident with $v_{i+1}$ in $G\square H_{k,\ell}$.
    \item Let $p_u\equiv 0$ for all $u\notin\{v_i,v_{i+1}\}$. 
\end{itemize} 

For each $v\in V(G)$ with $\deg_G(v)\leq r-3$, define $X_v:=\{\p_v^i: i\in I\cup\{2^\prime\}\}$. Let $X_3:=\bigcup X_v$, where the union is taken over all $v\in V(G)$ such that $\deg_G(v)\leq r-3$. Note that, since $|I\cup\{2^\prime\}|=\ell+k-1$, we have $|X_3|= (\ell+k-1)\sum_{t=0}^{r-3}d_t^G$.

It is straightforward to check that the vectors defined above are all in $W^r_{G\square H_{k,\ell},c^\prime}$ and have the property that they each evaluate to $0$ on all of $Z_c$. Define $X:=X_1\cup X_2\cup X_3$. Note that 
\[|X| = |X_1|+|X_2|+|X_3|=d^G_{r-1} + (\ell+k-3)d^G_{r-2} + (\ell+k-1)\sum_{t=0}^{r-3} d^G_{t}.\] 

\begin{claim}
\thlabel{claim:jcycles X ind}
    The vectors in $X$ are linearly independent. 
\end{claim}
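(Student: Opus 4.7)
The plan is to mirror the strategy of \thref{claim:trees X ind}. First, observe that every vector in $X_v$ is supported only on coordinates of the form $v_i$ for the fixed $v\in V(G)$, and these coordinate sets are disjoint for distinct $v$. Hence a linear dependence in $X$ splits into a linear dependence within each $X_v$ separately, and it suffices to prove linear independence of each $X_v$. The case $\deg_G(v)=r-1$ is trivial since $|X_v|=1$.

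In each remaining case, I would assume a linear combination $S=\sum_i \gamma_i \p_v^i = 0$ and proceed by iterative elimination. The key tool is the following observation: whenever a coordinate $v_j$ lies in the support of exactly two vectors $\p_v^{i_1}, \p_v^{i_2}$ in $X_v$, their restrictions to $v_j$ are polynomials of degree $\deg(v_j)-1$ taking value $1$ at distinct colors among $\alpha_{i_1}, \alpha_{i_2}$ and value $0$ at every other color incident to $v_j$; by Lagrange interpolation they are linearly independent in $\R[x]$, so restricting $S$ to $v_j$ forces $\gamma_{i_1}=\gamma_{i_2}=0$.

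For $\deg_G(v)\leq r-3$, the supports of the $\ell+k-1$ vectors $\{\p_v^i : i\in I\cup\{2'\}\}$ reproduce the edge-incidence structure of $H_{k,\ell}$, with each $\p_v^i$ supported on $\{v_i,v_{i+1}\}$ as $i$ runs along the two induced cycles. Every coordinate $v_j$ with $j\neq 1$ lies in exactly two such supports, so eliminating at each $v_j$ by walking around the $\ell$-cycle and then the $k'$-cycle forces $\gamma_i=0$ for all $i\in(I\cup\{2'\})\setminus\{1\}$. Finally, restricting $S$ to $v_1$ (in the support of $\p_v^1$, $\p_v^\ell$, and $\p_v^{k'}$) and using $\gamma_\ell=\gamma_{k'}=0$ leaves $\gamma_1 (\p_v^1)|_{v_1}=0$, whence $\gamma_1=0$. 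For $\deg_G(v)=r-2$, the vectors $\p_v^0,\p_v^1,\p_v^2$ have supports of size three or four while $\p_v^\ell,\p_v^{k'}$ are absent. I would run the two-vector elimination first at the interior coordinates $v_j$ for $j\in\{4,\ldots,\ell-1\}\cup\{4',\ldots,(k-1)'\}$, then propagate through $v_3$ and $v_{3'}$ (each in the support of two or three vectors) to force $\gamma_2=0$, through $v_\ell$ to force $\gamma_1=0$, and finally restrict to $v_{k'}$ (or equivalently $v_{3'}$) using the already-established vanishings to isolate and kill $\gamma_0$.

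The main obstacle I anticipate is bookkeeping at the degree-three vertices $v_1,v_2$ of $H_{k,\ell}$ and at the coordinates where three or four vectors overlap (notably $v_{3'}$ and $v_{k'}$ in the $\deg_G(v)=r-2$ case), where the two-polynomial argument can only be applied after several $\gamma_i$ have already been eliminated. The small case $\ell=4$ must also be checked separately, since then the path of interior coordinates on the $\ell$-cycle is empty and the elimination has to start directly at a coordinate of higher incidence.
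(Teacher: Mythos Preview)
Your proposal is correct and follows the same outline as the paper's proof: reduce to each $X_v$ and then eliminate the $\gamma_i$ by restricting $S$ to individual coordinates $v_j$, the only cosmetic difference being that the paper evaluates the restricted sum at a single colour $\alpha_i$ (killing one $\gamma_i$ per step) where you invoke linear independence of two Lagrange interpolants (killing two at once). One small correction: in the $\deg_G(v)\le r-3$ case the coordinate $v_2$ actually lies in three supports ($\p_v^1,\p_v^2,\p_v^{2'}$), not two --- but your walk through the degree-two vertices $v_3,\ldots,v_\ell,v_{3'},\ldots,v_{k'}$ already kills every $\gamma_i$ with $i\neq 1$ without visiting $v_2$, so the argument goes through unchanged.
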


\noindent\emph{Proof of \thref{claim:jcycles X ind}.} Since each vector in $X$ is only non-zero in the entries corresponding to a unique vertex $v\in V(G)$, it suffices to show that the vectors in each set $X_v$ are linearly independent, where $\deg_G(v)\leq r-1$. When $\deg_G(v)=r-1$, we have $|X_v|=1$, and thus $X_v$ is clearly linearly independent. So assume $\deg_G(v)<r-1$. 

First, suppose $\deg_G(v)=r-2$. Suppose for a contradiction that $S=\sum_{\p\in X_v} \gamma_\p \p = 0$ is a non-trivial linear combination. For ease of notation, we write $\gamma_i$ for $\gamma_{\p_v^i}$. First, suppose $\gamma_i\neq 0$ for some $i\in I\setminus\{1,2,3,3^\prime,\ell,k^\prime\}$. Note that $\p_v^i$ and $\p_v^{i-1}$ are the only two vectors in $X_v$ for which $p_{v_i}\not\equiv 0$. Hence $\gamma_{i-1}\neq 0$. Moreover, we have $\gamma_{i-1}\left. \p_v^{i-1}\right\vert_{v_i} + \gamma_{i}\left. \p_{v}^i\right\vert_{v_i} \equiv 0$. But $\left. \p_v^{i-1}\right\vert_{v_i}$ evaluated at $\alpha_i$ is $0$, while $\left. \p_v^{i}\right\vert_{v_i}$ evaluated at $\alpha_i$ is $1$, a contradiction. Hence $\gamma_i=0$ for all $i\in I\setminus\{1,2,3,3^\prime,\ell,k^\prime\}$, and thus $\gamma_i\neq 0$ for some $i\in \{0,1,2,3,3'\}$. Now, since $p_{v_{4}}\not\equiv 0$ in $\p_v^{3}$, but $p_{v_{4}}\equiv 0$ in $\p_v^{i}$ for all $i\in \{0,1,2,3^\prime\}$, we have $\gamma_{3}=0$. Similarly, $\gamma_{3^\prime}=0$. Thus $S=\gamma_0\p_v^0 + \gamma_1\p_v^1 + \gamma_2\p_v^2$. Now, since $p_{v_3}\not\equiv 0$ in $\p_v^{2}$, but $p_{v_{3}}\equiv 0$ in $\p_v^{i}$ for all $I\in \{0,1\}$, we have $\gamma_{2}=0$. Similarly, $\gamma_{1}=0$. But then $S=\gamma_0\p_v^0=0$ with $\p_v^0\neq 0$, and thus $\gamma_0=0$, contradicting $S$ being a non-trivial linear combination. Hence $X_v$ is linearly independent.

Finally, let $\deg_G(v)\leq r-3$. Suppose for a contradiction that $S=\sum_{\p\in X_v} \gamma_\p \p = 0$ is a non-trivial linear combination. As before, we write $\gamma_i$ for $\gamma_{\p_v^i}$. First, suppose $\gamma_i\neq 0$ for some $i\in I\setminus\{1,2\}$. Note that $\p_v^i$ and $\p_v^{i-1}$ are the only two vectors in $X_v$ for which $p_{v_i}\not\equiv 0$. Hence $\gamma_{i-1}\neq 0$. Moreover, we have $\gamma_{i-1}\left. \p_v^{i-1}\right\vert_{v_i} + \gamma_{i}\left. \p_{v}^i\right\vert_{v_i} \equiv 0$. But $\left. \p_v^{i-1}\right\vert_{v_i}$ evaluated at $\alpha_i$ is $0$, while $\left. \p_v^{i}\right\vert_{v_i}$ evaluated at $\alpha_i$ is $1$, a contradiction. Hence $\gamma_i=0$ for all $i\in I\setminus\{1,2\}$, and thus $S= \gamma_1\p_v^1 + \gamma_2\p_v^2 + \gamma_{2^\prime}\p_v^{2^\prime}$. Now, since $p_{v_3}\not\equiv 0$ in $\p_v^{2}$, but $p_{v_3}\equiv 0$ in $\p_v^{i}$ for all $i\in \{1,2^\prime\}$, we have $\gamma_{2}=0$. Similarly, $\gamma_{2^\prime}=0$. But then $S=\gamma_1\p_v^1=0$ with $\p_v^1\neq 0$, and thus $\gamma_1=0$, contradicting $S$ being a nontrivial linear combination. Hence $X_v$ is linearly independent.~$_{(\square)}$

Let $Y=A\cup X$. Therefore, by \thref{claim:jcycles A ind,jcycles claim,claim:jcycles X ind}, we have found
\vspace{-2mm}
\[\dim\left(W^r_{G,c}\right) + (\ell+k-5) \dim\left(W^{r-1}_{G,c}\right) + 2 \dim\left(W^{r-2}_{G,c}\right) + d^G_{r-1} + (\ell+k-3)d^G_{r-2} + (\ell+k-1)\sum\limits_{t=0}^{r-3} d^G_{t}\]
linearly independent vectors in $W_{G\square H_{k,\ell},c'}^r$, as required.
\end{proof}

\section{Conclusions}
\label{Sec:cons}

We now provide consequences of the results in the previous sections for specific products of graphs. First,
\thref{trees exact} immediately yields the following corollary.

\begin{cor}
\thlabel{trees exact path}
    Let $P_n$ be a path on $n\geq 2$ vertices, and let $r\in\Z^+$. If there exists a proper edge-colouring $c$ of $G$ such that $m_e(G,i)= \dim(W_{G,c}^i)$ for all $r-1\leq i\leq r$, then 
    \vspace{-2mm}
    \begin{align*}
        m_e(G\square P_n,r)&= m_e(G,r) + (n-1)m_e(G,r-1) + d^G_{r-1} + (n-1)\sum\limits_{t=0}^{r-2}d_t^G.
    \end{align*} 
\end{cor}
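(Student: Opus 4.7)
The plan is to invoke \thref{trees exact} directly with $T = P_n$, and then simplify the resulting expression. Since $P_n$ is a path, the disjunctive hypothesis of \thref{trees exact} (either $T$ is a path or $\delta(G)\geq r-2$) is automatically satisfied, so no additional condition on $G$ beyond the existence of the prescribed edge-colouring $c$ is required. Thus, under the stated assumption that $m_e(G,i) = \dim(W^i_{G,c})$ for $i \in \{r-1, r\}$, \thref{trees exact} yields
\[
m_e(G\square P_n,r)= m_e(G,r) + (n-1)m_e(G,r-1) + d^G_{r-1} + \sum_{t=0}^{r-2}d^G_{t}\left(1+\sum_{i=2}^{\Delta(P_n)} d_i^{P_n}\right).
\]

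The remaining task is purely combinatorial: to show that the parenthesised factor equals $n-1$ for every $n \geq 2$. First I would handle the case $n \geq 3$, where $P_n$ has exactly two leaves (endpoints) and $n-2$ internal vertices, all of degree $2$. Hence $\Delta(P_n)=2$ and $d_2^{P_n}=n-2$, so that $1+\sum_{i=2}^{2} d_i^{P_n} = 1+(n-2) = n-1$. For the boundary case $n=2$, the path $P_2=K_2$ has $\Delta(P_n)=1$, so the inner sum $\sum_{i=2}^{1}$ is empty (and equal to $0$ by convention); we again obtain $1+0 = 1 = n-1$.

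Substituting this simplified value into the formula from \thref{trees exact}, the last two terms collapse as
\[
d^G_{r-1} + (n-1)\sum_{t=0}^{r-2} d^G_{t},
\]
which matches the statement of \thref{trees exact path} exactly. The main obstacle is essentially trivial: the only subtlety is being careful with the edge case $n = 2$, where $\Delta(P_n) < 2$ and the inner sum is empty, but as noted this boundary case produces the correct factor $n-1 = 1$ as well.
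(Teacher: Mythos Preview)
Your proposal is correct and follows exactly the approach the paper takes: the paper simply states that \thref{trees exact} ``immediately yields'' this corollary, and your argument spells out the straightforward simplification of $1+\sum_{i=2}^{\Delta(P_n)} d_i^{P_n}$ to $n-1$ (including the $n=2$ edge case) that justifies this.
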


\vspace{-1mm}

\noindent Note that \thref{trees exact path} is a generalization of Theorem 9 from \cite{HHQ}, since the condition $m_e(G,i)= \dim(W_{G,c}^i)$ for all $i\geq 0$ holds when $G$ is a product of paths (see the proof of Theorem 9 in \cite{HHQ}). Moreover, as the edge-colouring $c'$ of $G\square P_n$ defined in the proof of \thref{trees lower} does not depend on the value of $r$, the proof of \thref{trees exact path} also gives the following result.

\begin{cor}
\thlabel{G path equal}
Let $P_n$ be a path on $n\geq 2$ vertices. If there exists a proper edge-colouring $c$ of $G$ such that $m_e(G,r)= \dim(W_{G,c}^r)$ for all $r\geq 0$, then there exists a proper edge-colouring $c'$ of $G\square P_n$ such that $m_e(G\square P_n,r)= \dim(W_{G\square P_n,c'}^r)$ for all $r\geq 0$.
\end{cor}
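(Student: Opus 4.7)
The plan is to use the proper edge-colouring $c'$ of $G\square P_n$ constructed in the proof of \thref{trees lower} with $T = P_n$, and to exploit the crucial observation that this colouring depends only on $c$ together with a choice of $n-1$ distinct reals $\alpha_1,\ldots,\alpha_{n-1}$ lying outside $c(E)$, but \emph{not} on $r$. Fixing such a $c'$ once and for all, I will show that the equality $m_e(G\square P_n, r) = \dim(W^r_{G\square P_n, c'})$ then holds simultaneously for every $r \geq 0$.

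The case $r = 0$ is immediate, since both sides vanish by the conventions recorded after \thref{W}. For $r \geq 1$, since the hypothesis gives $m_e(G, i) = \dim(W^i_{G, c})$ for all $i \geq 0$, the assumption of \thref{trees exact path} is satisfied at $r$, yielding the exact formula
$$m_e(G\square P_n, r) = m_e(G, r) + (n-1)\, m_e(G, r-1) + d^G_{r-1} + (n-1) \sum_{t=0}^{r-2} d^G_t.$$
Separately, \thref{trees lower} applied with $T = P_n$ produces the same quantity as a lower bound for $\dim(W^r_{G\square P_n, c'})$: since $\Delta(P_n) \leq 2$ and $d^{P_n}_2 = n-2$, the factor $1 + \sum_{i=2}^{\Delta(P_n)} d^{P_n}_i$ equals $n-1$ (for $n = 2$ the sum is empty and the factor is still $1 = n-1$), so the bound from \thref{trees lower} matches the formula above once we substitute $\dim(W^i_{G,c}) = m_e(G,i)$. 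Combining this lower bound with the universal upper bound $m_e(G\square P_n, r) \geq \dim(W^r_{G\square P_n, c'})$ provided by \thref{HHQ ineq} forces equality for every $r \geq 1$.

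There is no substantive technical obstacle here: the corollary is really a repackaging of \thref{trees exact path} together with \thref{trees lower}. The only point worth emphasising is that a single colouring $c'$ works for every $r$ simultaneously, which is transparent from the construction in the proof of \thref{trees lower}: the only constraints imposed on the auxiliary colours $\alpha_i$ are that they be pairwise distinct and avoid $c(E)$, conditions entirely independent of $r$.
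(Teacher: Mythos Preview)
Your proposal is correct and follows essentially the same approach as the paper: the paper's proof is just the observation that the colouring $c'$ built in the proof of \thref{trees lower} does not depend on $r$, so the argument behind \thref{trees exact path} yields the equality simultaneously for every $r\ge 0$. You have simply unpacked this in more detail, including the verification that $1+\sum_{i=2}^{\Delta(P_n)}d_i^{P_n}=n-1$ and the sandwich $m_e(G\square P_n,r)\ge \dim(W^r_{G\square P_n,c'})\ge N = m_e(G\square P_n,r)$.
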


In the case of stars, since the edge-colouring $c'$ of $G\square S_k$ defined in proof of \thref{stars lower} (see Appendix A for details) does not depend on the value of $r$, the proof of \thref{Gen Stars exact} implies the following result.  

\begin{cor}
\thlabel{G star equal}
Let $k\in\Z^+$ and $G$ be a graph. If there exists a proper edge-colouring $c$ of $G$ such that $m_e(G,r)=\dim(W^{r}_{G,c})$ for each $r\geq 0$, then there exists a proper edge-colouring $c'$ of $G\square S_k$ such that $m_e(G\square S_k,r)= \dim(W_{G\square S_k,c'}^r)$ for all $r\geq 0$.
\end{cor}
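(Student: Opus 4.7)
The plan is to combine \thref{HHQ ineq}, \thref{stars upper}, and \thref{stars lower} into an equality chain, leveraging the key structural observation flagged in the paragraph preceding the corollary: the edge-colouring $c'$ of $G \square S_k$ built in the proof of \thref{stars lower} makes no reference to $r$. Concretely, one picks distinct real numbers $\alpha_1, \ldots, \alpha_k$ outside $c(E(G))$ and uses them to colour the $k$ edges of $S_k$ (one colour per copy of $G$ attached at a leaf), keeping $c$ on every copy of $G$ itself. A single such $c'$ therefore serves as a candidate witness simultaneously for every $r \geq 0$.

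First I would fix any proper edge-colouring $c$ of $G$ satisfying $m_e(G,r) = \dim(W^r_{G,c})$ for all $r \geq 0$, and produce $c'$ by the recipe above. For $r = 0$ equality is immediate, since $m_e(G \square S_k, 0) = 0 = \dim(W^0_{G \square S_k, c'})$ by the convention stated after \thref{W}. For $r \geq 1$, I would chain
\[m_e(G \square S_k, r) \;\geq\; \dim(W^r_{G \square S_k, c'}) \;\geq\; \dim(W^r_{G,c}) + k \dim(W^{r-1}_{G,c}) + \sum_{t=1}^{k-1} t\, d^G_{r-t} + k \sum_{t=k}^{r} d^G_{r-t},\]
using \thref{HHQ ineq} for the first inequality and \thref{stars lower} (applied to the already-chosen $c'$) for the second. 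The hypothesis then lets me substitute $\dim(W^i_{G,c}) = m_e(G,i)$ for $i \in \{r-1,r\}$, transforming the right-hand side into $m_e(G,r) + k\, m_e(G,r-1) + \sum_{t=1}^{k-1} t\, d^G_{r-t} + k \sum_{t=k}^{r} d^G_{r-t}$, which by \thref{stars upper} is an upper bound for $m_e(G \square S_k, r)$. Every inequality in the chain collapses to an equality, yielding $m_e(G \square S_k, r) = \dim(W^r_{G \square S_k, c'})$ as desired.

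No genuine obstacle is anticipated: the entire content is already contained in the proof of \thref{Gen Stars exact}, and the single nontrivial step is verifying that the construction of $c'$ in the appendix does not use $r$. Once that is observed, the same $c'$ can be reused uniformly across $r$, which is precisely the strengthening the corollary records.
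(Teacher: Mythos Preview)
Your proposal is correct and matches the paper's own justification essentially verbatim: the paper simply observes that the colouring $c'$ built in the proof of \thref{stars lower} is independent of $r$, so the proof of \thref{Gen Stars exact} (i.e., the chain \thref{HHQ ineq} $\Rightarrow$ \thref{stars lower} $\Rightarrow$ hypothesis $\Rightarrow$ \thref{stars upper}) yields the equality for every $r\ge 0$ with this single $c'$. Your explicit handling of the $r=0$ base case and the remark that one is really invoking the \emph{proof} of \thref{stars lower} (not just its existential statement) are the only details you add, and both are appropriate.
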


Similarly, the proof of \thref{Gen joined cycles exact} provides an analogous corollary for theta graphs.

\begin{cor}
\thlabel{G theta equal}
Let $k\geq \ell \geq 4$ be integers, and let $G$ be a graph. If there exists a proper edge-colouring $c$ of $G$ such that $m_e(G,r)=\dim(W^{r}_{G,c})$ for each $r\geq 0$, then there exists a proper edge-colouring $c'$ of $G\square H_{k,\ell}$ such that $m_e(G\square H_{k,\ell},r)= \dim(W_{G\square H_{k,\ell},c'}^r)$ for all $r\geq 0$.
\end{cor}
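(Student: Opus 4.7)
The plan is to mimic the proofs of Corollaries \thref{G path equal} and \thref{G star equal}. The key observation is that the proper edge-colouring $c'$ of $G\square H_{k,\ell}$ constructed in the proof of \thref{joined cycles lower}---namely, $c'$ agreeing with $c$ on each copy $G_i$ and using fresh colours $\{\alpha_i:i\in I\cup\{2'\}\}$ disjoint from $c(E)$ on the edges between copies---depends only on $c$ and the graph $H_{k,\ell}$, not on $r$. Hence a single $c'$ works simultaneously for every $r\geq 0$.

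For the main case $r\geq 2$, the hypothesis that $m_e(G,i)=\dim(W^i_{G,c})$ for all $i\geq 0$ supplies, in particular, the hypothesis of \thref{Gen joined cycles exact} (which requires this identity for $i\in\{r-2,r-1,r\}$). Letting $N_r$ denote the right-hand side of the formula in \thref{Gen joined cycles exact}, I would chain together, for this fixed $c'$, the three inequalities
\[N_r \;\leq\; \dim\!\left(W^r_{G\square H_{k,\ell},c'}\right) \;\leq\; m_e(G\square H_{k,\ell},r) \;\leq\; N_r,\]
coming respectively from the lower-bound construction inside the proof of \thref{joined cycles lower}, from \thref{HHQ ineq}, and from the construction in \thref{joined cycles upper}. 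Since the outer quantities already agree by \thref{Gen joined cycles exact}, equality holds throughout, yielding $m_e(G\square H_{k,\ell},r)=\dim(W^r_{G\square H_{k,\ell},c'})$.

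It remains to dispose of the small cases $r=0$ and $r=1$, which lie outside the range of \thref{Gen joined cycles exact}. When $r=0$, both $m_e(\cdot,0)$ and $\dim(W^0_{\cdot,\cdot})$ vanish by the remarks following \thref{W}. When $r=1$, the degree constraints in \thref{W} force each entry of a vector in $W^1_{G\square H_{k,\ell},c'}$ to be a constant, equal across each connected component containing an edge and zero on isolated vertices; since $H_{k,\ell}$ is connected and contains an edge, a short direct check shows that both $\dim(W^1_{G\square H_{k,\ell},c'})$ and $m_e(G\square H_{k,\ell},1)$ equal the number of connected components of $G$.

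The bulk of the substantive work has already been done in the proofs of \thref{joined cycles upper,joined cycles lower,Gen joined cycles exact}; consequently, the only real step to carry out is to record that the colouring $c'$ is defined uniformly in $r$, and then to patch in the small-$r$ cases. I do not foresee any serious obstacle beyond the routine verification of these edge cases.
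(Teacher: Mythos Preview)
Your proposal is correct and follows essentially the same approach as the paper: the paper's entire justification is the one-line remark that the proof of \thref{Gen joined cycles exact} yields the corollary because the colouring $c'$ constructed in \thref{joined cycles lower} does not depend on $r$. You are in fact more careful than the paper, which glosses over the cases $r=0$ and $r=1$ that fall outside the hypothesis $r>1$ of \thref{Gen joined cycles exact}; your direct verification of these edge cases is sound (for $r=1$, every vertex of $G\square H_{k,\ell}$ has degree at least $2$, so the constants-on-components argument goes through cleanly).
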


In particular, since $m_e(K_1,r)=0=\dim(W^{r}_{K_1,c})$ for any $r\geq 0$, where $c$ is the empty edge-colouring, recursively applying \thref{G path equal,G star equal,G theta equal} gives the following result for graphs $G$ which are a product of paths, stars, and theta graphs.

\begin{cor}
\thlabel{prod equal}
Let $p\ge 1$, and let $G_1,...,G_p$ be graphs such that each $G_i$ is either a path, a star, or a theta graph of the form $H_{k,\ell}$ for $k\ge \ell\ge 4$. Let $G=G_1\square\cdots\square G_p$. Then there exists a proper edge-colouring $c$ of $G$ such that $m_e(G,r)=\dim(W^r_{G,c})$ for all $r\ge 0$.  
\end{cor}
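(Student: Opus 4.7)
The plan is to prove \thref{prod equal} by a straightforward induction on $p$, peeling off one factor at a time and invoking whichever of \thref{G path equal}, \thref{G star equal}, or \thref{G theta equal} matches the factor being added. To set up the induction cleanly, define $H_0 := K_1$ and $H_i := H_{i-1} \square G_i$ for $1 \le i \le p$, so that $H_p \cong G$ by associativity of the Cartesian product.

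For the base case $i=0$, note that $K_1$ has no edges, so taking $c_0$ to be the empty edge-colouring we have $m_e(K_1,r)=0$ trivially, while the remark immediately following \thref{W} gives $\dim(W^r_{K_1,c_0})=0$ for every $r \ge 0$. Thus the property $m_e(H_0,r)=\dim(W^r_{H_0,c_0})$ for all $r \ge 0$ holds at the start.

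For the inductive step, suppose that a proper edge-colouring $c_{i-1}$ of $H_{i-1}$ has been produced with $m_e(H_{i-1},r)=\dim(W^r_{H_{i-1},c_{i-1}})$ for every $r \ge 0$. By hypothesis, $G_i$ is a path $P_n$, a star $S_k$, or a theta graph $H_{k,\ell}$ with $k \ge \ell \ge 4$. In the three cases we apply, respectively, \thref{G path equal}, \thref{G star equal}, or \thref{G theta equal} to the graph $H_{i-1}$ with its edge-colouring $c_{i-1}$; each of these corollaries takes as input a proper edge-colouring realising $\dim(W^r_{\cdot,\cdot})=m_e(\cdot,r)$ for \emph{all} $r \ge 0$ and outputs a proper edge-colouring $c_i$ of $H_{i-1}\square G_i = H_i$ with the same property for all $r \ge 0$. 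After $p$ iterations we obtain the desired edge-colouring $c := c_p$ of $G = H_p$.

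There is essentially no obstacle here: all of the real work has already been absorbed into the three corollaries, which in turn rely on the observation that the auxiliary edge-colourings $c'$ constructed in the lower-bound proofs of \thref{trees lower}, \thref{stars lower}, and \thref{joined cycles lower} do not depend on $r$. The only point to watch is that each corollary demands the inner hypothesis hold uniformly in $r$, which is exactly what the previous induction step delivers, so the chain closes without loss at any stage.
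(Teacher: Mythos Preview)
Your proof is correct and follows essentially the same approach as the paper: induct on the number of factors, starting from $K_1$ with the empty edge-colouring, and at each step invoke whichever of \thref{G path equal}, \thref{G star equal}, or \thref{G theta equal} matches the next factor.
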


Combining \thref{prod equal} with \thref{trees exact path,Gen Stars exact,Gen joined cycles exact} allows us to obtain recursive formulas for the value of $m_e(G,r)$ when $G$ can be written a a product $G_1\square \cdots \square G_p$ where each $G_i$ is either a path, a star, or a theta graph of the form $H_{k\ell}$ where $k\ge \ell\ge 4$. In particular, if the graphs $G_i$ are all stars, then recursively applying \thref{Gen Stars exact,G star equal} we obtain \thref{Stars exact}, restated here for convenience.

\StarsExact*

Similarly, if the graphs $G_i$ are all theta graphs, then recursively applying \thref{Gen joined cycles exact,G theta equal} we obtain \thref{joined cycles exact}, restated here for convenience. 

\JCyclesExact*

\section{Concluding Remarks} 
\label{sec:concl}

In this section, we present some open problems and avenues for future research related to the study of weak saturation via properties of polynomials.

The proofs of lower bounds on $m_e(G,r)$ given in this paper, and those by Hambardzumyan, Hatami, and Qian in~\cite{HHQ}, all relied on the polynomial method of Hambardzumyan, Hatami, and Qian~\cite{HHQ} (see \thref{HHQ ineq}). Therefore, a natural line of inquiry would be to utilize \thref{HHQ ineq} to determine the value of $m_e(G,r)$ for other graphs $G$. In addition to continuing this investigation for Cartesian products of graphs, one could also consider other graph products, such as a tensor product or a strong product. 

Recall that, in Section \ref{Sec:Trees}, we provided upper and lower bounds on $m_e(G\square T,r)$ when $T$ is a tree, and showed that these bounds match in certain cases, namely when either $\delta(G)\geq r-2$ or $T$ is a path. It would be interesting to provide matching bounds for other families of trees. It is our belief that the upper bound presented in \thref{trees upper} is closer to the exact value of $m_e(G\square T,r)$. It is possible that a different choice of polynomials in the proof of \thref{trees lower} could have improved the lower bound.

\begin{qn}
    Is it possible to improve the lower bound on $m_e(G\square T,r)$ presented in \thref{trees lower} so it matches the upper bound presented in \thref{trees upper}?
\end{qn}

In order to apply our results to give a recursive formula for $m_e(G,r)$ when $G$ is a Cartesian product of certain graphs, we require the condition that there exists a proper edge colouring $c$ such that $m_e(G,r)=\dim(W^r_{G,c})$ for these graphs $G$. Therefore, it would be interesting to classify the graphs for which this property holds. 

\begin{qn}
    Under which conditions is $m_e(G,r)=\dim(W^r_{G,c})$ for a graph $G$ and a proper edge-colouring $c$ of $G$?
\end{qn}

As mentioned in the introduction, the $r$-bond boostrap percolation process is an example of a more general process called \emph{weak saturation}. Given graphs $G$ and $H$, a spanning subgraph $F$ of $G$ is \emph{weakly-}$(G,H)$\emph{-saturated} if $F$ is $H$-free, but there exists an ordering $e_1,...,e_m$ of $E(G)\setminus E(F)$, such that, for each $i\in[m]$, the addition of the edge $e_i$ to $F\cup\{e_1,...,e_{i-1}\}$ creates a new copy of $H$. The \emph{weak saturation number} of $H$ in $G$, denoted by $\wsat(G,H)$, is the minimum number of edges in such a graph~$F$; that is, \[\wsat(G,H) = \min\{|E(F)|: F\text{ is weakly-}(G,H)\text{-saturated}\}.\] Note that $m_e(G,r)=\wsat(G,K_{1,r+1})$.

Many results providing good lower bounds for weak saturation numbers have been obtained via algebraic methods; see, for example, \cite{A,BMM,F,K84,K85,L,MS,Pik}. In particular a general linear algebraic lemma of Balogh, Bollob\'{a}s, Morris, and Riordan~\cite{BBMR} has been used to prove several different results on weak saturation; see \cite{BBMR,KMM,MN18,MNS}.

\begin{lemma}
\thlabel{LLAL}
    \emph{\cite{BBMR}} Let $F$ be a graph, let $H$ be a subgraph of $F$, and let $W$ be a vector space. Suppose there exists a set $S=\{f_e:e\in E(F)\}\subseteq W$ of vectors in $W$ such that, for every copy $H'$ of $H$ in $F$, there exists a set of non-zero scalars $\{c_e:e\in E(H')\}$ such that $\sum_{e\in E(H')} c_ef_e=0$. Then $\wsat(F,H)\geq \dim(S)$.
\end{lemma}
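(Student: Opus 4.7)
The plan is to show that any weakly-$(F,H)$-saturated spanning subgraph $F'$ of $F$ satisfies $|E(F')|\geq \dim(S)$, by arguing that the vectors $\{f_e:e\in E(F')\}$ already span the full set $S$.

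First, I would fix an arbitrary weakly-$(F,H)$-saturated subgraph $F'\subseteq F$, and let $e_1,\ldots,e_m$ be the guaranteed ordering of $E(F)\setminus E(F')$ such that, for each $i\in[m]$, the addition of $e_i$ to $F_{i-1}:=F'\cup\{e_1,\ldots,e_{i-1}\}$ creates a new copy $H_i$ of $H$. The crucial structural fact extracted from the definition of weak saturation is that every edge of $H_i$ other than $e_i$ lies in $F_{i-1}$; this is because $H_i\subseteq F_{i-1}\cup\{e_i\}$ and $e_i$ is the edge whose addition creates $H_i$ as a \emph{new} copy.

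Next, I would argue by induction on $i$ that $f_{e_i}\in \vspan\{f_e:e\in E(F_{i-1})\}$. Applying the hypothesis of the lemma to the copy $H_i$ yields non-zero scalars $\{c_e:e\in E(H_i)\}$ with
\[
\sum_{e\in E(H_i)} c_e f_e = 0.
\]
Since $c_{e_i}\neq 0$, I can rearrange this identity to obtain
\[
f_{e_i} = -c_{e_i}^{-1}\sum_{e\in E(H_i)\setminus\{e_i\}} c_e f_e,
\]
and by the structural observation above together with the inductive hypothesis, each $f_e$ appearing on the right-hand side lies in $\vspan\{f_e:e\in E(F')\}$. Hence so does $f_{e_i}$, completing the induction. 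In particular, $S\subseteq \vspan\{f_e:e\in E(F')\}$, which immediately gives
\[
\dim(S) \leq |E(F')|.
\]
Taking the minimum over all weakly-$(F,H)$-saturated $F'$ yields $\dim(S)\leq \wsat(F,H)$.

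The argument is short, and the only real subtlety is pinning down the structural observation that each copy $H_i$ uses only $e_i$ and earlier edges from $F_{i-1}$ — everything else is a direct one-line deduction from the hypothesised linear dependence together with the non-vanishing of the coefficient of the newest edge. The role of the non-zero scalars hypothesis is precisely what lets the induction proceed: without $c_{e_i}\neq 0$, one could not solve for $f_{e_i}$ in terms of earlier vectors.
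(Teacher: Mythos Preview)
Your proof is correct and is the standard argument for this lemma. Note, however, that the paper does not actually prove \thref{LLAL}: it is stated with a citation to \cite{BBMR} and used only as background in the concluding remarks, so there is no proof in the paper to compare against. The argument you give is essentially the one appearing in \cite{BBMR}.
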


Recently, Miralaei, Mohammadian and Tayfeh-Rezaie~\cite{MMT} showed that the polynomial method from~\cite{HHQ} is a special case of Lemma~\ref{LLAL}. That is, any bound obtained by applying Theorem~\ref{HHQ ineq} can also be obtained via an application of Lemma~\ref{LLAL}. However, it is worth noting that there exist cases where constructing the required polynomials yields a much slicker and simpler proof than constructing the vectors required to apply Lemma~\ref{LLAL}; see \cite{HHQ} and \cite{MN18}. It is also interesting to note that it has been proved by Terekhov and Zhukovskii~\cite{TZ} that the method of~\cite{BBMR} cannot always achieve tight lower bounds. Terekhov and Zhukovskii~\cite{TZ} also introduce an improved modification of Lemma~\ref{LLAL} which is able to find tight bounds in cases where Lemma~\ref{LLAL} fails to. It would be interesting to investigate the potential limitations of this new method.

\bibliographystyle{abbrv} 
	\bibliography{References}

\pagebreak

\appendix

\section{Proof of \thref{Gen Stars exact}}

The goal of this section is to provide a proof of \thref{Gen Stars exact}, restated here for convenience.

\GenStarsExact*

As noted in Section \ref{Subsec:Stars}, \thref{Gen Stars exact} follows from \thref{stars upper,stars lower}, which we prove below, together with \thref{HHQ ineq}. In what follows, let the vertices of $S_k$ be labelled $0,1,...,k$, where $0$ is the centre of the star. Let $G_0,G_1,...,G_k$ denote the $k+1$ copies of $G$ in $G\square S_k$ corresponding to the $k+1$ vertices in $S_k$. Here, $G_0$ corresponds to the centre of $S_k$. For each vertex $v\in V(G)$, denote its corresponding vertex in $G_i$ by $v_i$, where $0\leq i\leq k$. 

\begin{proof}[Proof of Proposition~\ref{stars upper}]
If $\delta(G)\geq r$, we construct an $r$-percolating set $F\subseteq E(G\square S_k)$ for $G\square S_k$ as follows: For each $i\in [k]$, pick an optimal $(r-1)$-percolating set $F_i$ on $G_i$. Finally, let $F_0$ be an optimal $r$-percolating set for $G_0$. Let $F=\bigcup_{i=0}^{k} F_i$. Note that $F$ will infect all edges in $G\square S_k$, starting with those in $G_0$ and then spreading out to those in $G_1,...,G_k$. Indeed, by our choice of $F_0$, after running the $r$-bond bootstrap percolation process on $G_0$, all edges in $G_0$ will be infected. Since each vertex in $G$ has degree at least $r$, we can then infect all edges between $G_0$ and $G_i$, for each $i\in [k]$. Now, for $i\in [k]$, each vertex $v_i\in V(G_i)$ has an infected incident edge coming from $G_0$. This, together with $F_i$, will infect all edges in $G_i$. Therefore, the set $F$ percolates in $G\square S_k$, as required.

It remains to consider the case where $G$ contains vertices of degree less than $r$. Let the set $F$ be as define above. We add the following edges to $F$: 
\begin{itemize}[nosep]
\item For each $t\in [k-1]$ and each $v\in V(G)$ with $\deg_G(v)=r-t$, add the edges $v_0v_i$ for $i\in [t]$ to the set $F$. Note that adding these edges will guarantee that, once $G_0$ is fully infected, the infection will spread to the edges $v_0v_j$ where $t+1\leq j\leq k$, and we can then continue as above. This adds $\sum_{t=1}^{k-1} t d^G_{r-t}$ edges to the set $F$. 
\item For each $v\in V(G)$ with $\deg_G(v)\leq r-k$, add all edges $v_0v_i$ for $i\in [k]$ to $F$. This adds $k\sum_{t=k}^r d^G_{r-t}$ edges to the set $F$. 
\end{itemize}
Altogether, we added $\sum_{t=1}^{k-1} t d^G_{r-t} + k\sum_{t=k}^r d^G_{r-t}$ edges to the $m_e(G,r) + km_e(G,r-1)$ edges initially in $F$. Therefore, since the resulting set $F$ percolates in $G$, we have established the desired upper bound on $m_e(G\square S_k,r)$.
\end{proof}

\begin{proof}[Proof of Proposition~\ref{stars lower}]
Let $\alpha_1,...,\alpha_k$ be distinct real numbers not in $c(E)$. Define $c^{\prime}$ to be the proper edge-colouring of $G\square S_k$ that is consistent with $c$ on each of $G_0,G_1,...,G_k$, and where $c^{\prime}(v_0v_i)=\alpha_i$ for all $i\in [k]$ and all $v\in V(G)$. 

Let 
$N:=\dim\left(W^r_{G,c}\right) + k \dim\left(W^{r-1}_{G,c}\right) + \sum_{t=1}^{k-1}t d_{r-t} + k \sum_{t=k}^r d_{r-t}$. 
In order to prove \thref{stars lower}, we will find a set $Y$ of $N$ linearly independent vectors in $W^r_{G\square S_k,c^\prime}$. We begin by finding a set $A$ of $\dim\left(W^r_{G,c}\right) + k \dim\left(W^{r-1}_{G,c}\right)$ linearly independent vectors in $W^r_{G\square S_k,c^\prime}$.

First, consider a basis $B^{(r)}$ for $W^r_{G,c}$. For each $\q=(q_v)_{v\in V(G)}$ in $B^{(r)}$, define the vector $\p_\q^{0}=(p_u)_{u\in V(G\square S_k)}$ so that $p_{v_i}=q_v$ for all $0\leq i\leq k$ and all $v\in V(G)$. Trivially, the two conditions in \thref{W} are satisfied, and thus $\p_\q^{0}\in W^r_{G\square S_k,c^{\prime}}$. Let $A_0:=\{\p_\q^{0}:q\in B^{(r)}\}$. Note that the restriction of a vector $\p_\q^{0}\in A_0$ to $G_0$ equals $\q$, and thus the vectors in $A_0$ are linearly independent since $B^{(r)}$ is a basis.

Now, consider a basis $B^{(r-1)}$ for $W^{r-1}_{G,c}$. Fix $\ell\in [k]$. For each $\q=(q_v)_{v\in V(G)}$ in $B^{(r-1)}$, define the vector $\p_\q^{\ell}=(p_u)_{u\in V(G\square S_k)}$ so that, for each vertex $v\in V(G)$, we have $p_{v_\ell}=q_v(x-\alpha_\ell)$, and $p_{v_i}\equiv 0$ for all other $0\leq i\leq k$. For each $v\in V(G)$, since $\q\in W^{r-1}_{G,c}$ we have
\begin{align*}
    \deg(p_{v_\ell}) &= \deg(q_v) + 1 \leq \min\{r-1,\deg_G(v)\}= \min\{r,\deg(v_\ell)\}-1.
\end{align*}
\noindent Moreover, $\p_{v_\ell}(\alpha_\ell)=0=\p_{v_0}(\alpha_\ell)$. Since $p_u(c'_{uv})=p_v(c'_{uv})$ for all other edges $uv\in E(G\square S_k)$, the conditions in \thref{W} are satisfied, and thus these vectors $\p_\q^{\ell}$ belong to $W^{r}_{G\square S_k,c^\prime}$. 
For each $\ell\in [k]$, define $A_\ell=\{\p_\q^{\ell}:\q\in B^{(r-1)}\}$. Note that, for each $\ell\in [k]$, the restriction of $\p_\q^{\ell}$ to $G_\ell$ is the vector $(x-\alpha_\ell)\q$. Thus, since $x-\alpha_\ell\not\equiv 0$ and $B^{(r-1)}$ is a basis, the vectors in $A_\ell$ are linearly independent. Define $A:=\bigcup_{i=0}^k A_i$. Note that $|A_0|=\dim\left(W^r_{G,c}\right)$, and $|A_i|=\dim\left(W^{r-1}_{G,c}\right)$ for each $i\in [k]$. Hence $|A|=\dim\left(W^r_{G,c}\right) + k \dim\left(W^{r-1}_{G,c}\right)$. 

\begin{claim}
\thlabel{claim:stars A ind}
    The vectors in $A$ are linearly independent.
\end{claim}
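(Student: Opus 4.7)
The plan is to exploit the block structure of the vectors defined above. For each $\ell \in [k]$, every vector $\p_\q^\ell \in A_\ell$ is supported entirely on the copy $G_\ell$ (all its other coordinates are zero polynomials), while the vectors in $A_0$ are the only members of $A$ with non-zero entries on the central copy $G_0$. This clean separation lets a direct coordinate-restriction argument work, bypassing the ``smallest index'' ordering trick that was needed in the tree case \thref{claim:trees A ind}.

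Concretely, I would suppose that $S = \sum_{\p \in A} \beta_\p \p \equiv 0$ is a non-trivial linear combination and first restrict $S$ to the coordinates indexed by $V(G_0)$. By construction, only the vectors in $A_0$ contribute to this restriction, since $p_{v_0} \equiv 0$ for every $v \in V(G)$ and every $\p_\q^\ell \in A_\ell$ with $\ell \geq 1$. Moreover, each $\p_\q^0$ restricts to $\q$ on $G_0$. Hence $\sum_{\q \in B^{(r)}} \beta_{\p_\q^0} \q \equiv 0$ on $G_0$, and since $B^{(r)}$ is a basis of $W^r_{G,c}$, every coefficient $\beta_\p$ with $\p \in A_0$ must vanish.

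Next, I would fix $\ell \in [k]$ and restrict $S$ to the coordinates indexed by $V(G_\ell)$. Since the $A_0$ coefficients are already known to be zero, and vectors in $A_j$ with $j \in [k] \setminus \{\ell\}$ vanish on $G_\ell$, this restriction reduces to $(x-\alpha_\ell) \sum_{\q \in B^{(r-1)}} \beta_{\p_\q^\ell} \q \equiv 0$. The factor $x-\alpha_\ell$ is a nonzero polynomial in $\R[x]$, so cancelling it and using linear independence of $B^{(r-1)}$ forces $\beta_{\p_\q^\ell} = 0$ for every $\q \in B^{(r-1)}$. Letting $\ell$ range over $[k]$ shows that all coefficients of $S$ are zero, contradicting the non-triviality assumption. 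There is no real obstacle here since the block structure around the star's centre cleanly decouples $A_0$ from the $A_\ell$'s; the only thing to check is the support claim for vectors in each $A_\ell$ with $\ell \geq 1$, which is immediate from the defining formula $p_{v_\ell} = q_v(x-\alpha_\ell)$ and $p_{v_i} \equiv 0$ for $i \neq \ell$.
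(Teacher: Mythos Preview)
Your proposal is correct and follows essentially the same approach as the paper: restrict first to $G_0$ to kill the $A_0$ coefficients (using that $p_{v_0}\equiv 0$ for every $\p\in A_\ell$ with $\ell\ge 1$), then restrict to each $G_\ell$ to kill the remaining $A_\ell$ coefficients. The paper phrases this via a ``smallest index with a nonzero coefficient'' device, but the underlying support-and-restriction logic is identical to yours.
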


\noindent\emph{Proof of \thref{claim:stars A ind}:} Suppose for a contradiction that there exists a linear combination $S=\sum_{\p\in A} \beta_\p \p = 0$, where $\beta_\p\neq0$ for some $\p\in A$. Write $S=S_0+S_1+\cdots+S_k$ where $S_i=\sum_{\p\in A_i}\beta_\p \p$ for each $0\leq i\leq k$. Let $0\leq i\leq k$ be the smallest index such that there exists $\p\in A_i$ with $\beta_\p\neq 0$. If $i=0$, then $p_{v_0}\equiv 0$ for all $\p\in A_1\cup\cdots\cup A_k$ and $v\in V(G)$. Hence the restriction of $S$ to the coordinates corresponding to vertices in $G_0$ is equal to the restriction of $S_0$ to $G_0$. However, as the vectors in $A_0$ are linearly independent, this contradicts $\beta_\p\neq 0$ for some $\p\in A_0$. Hence $i\neq 0$, and thus $\beta_\p=0$ for all $\p\in A_0$. 

Otherwise, if $i\in [k]$, then $p_{v_i}\equiv 0$ for each $\p\in \bigcup_{j\in [k]\setminus\{i\}} A_j$ and $v\in V(G)$. Therefore, since $\beta_\p=0$ for all $\p\in A_0$, the restriction of the sum $S$ to the coordinates corresponding to vertices in $G_i$ is equal to the restriction of $S_i$ to $G_i$. Again, since the vectors in $A_i$ are linearly independent, this contradicts our assumption that $\beta_\p\neq 0$ for some $\p\in A_i$.~$_{(\square)}$ 

Now, set $Y=A$. To achieve the desired lower bound on $\dim\left(W^r_{G\square S_k,c^\prime}\right)$, we must extend the set $Y$ to include an additional $\sum_{t=1}^{k-1}t d^G_{r-t} + k \sum_{t=k}^r d^G_{r-t}$ vectors, all of which are linearly independent. To this end, consider the set of vectors $Z_c\in \R^{|V(G)|}$ as defined in \thref{Z}. 

\begin{claim}
\thlabel{stars claim}
For each non-zero $\p\in \vspan(A)$, there exists $\z\in Z_c$ such that $\p(\z)\neq 0$.
\end{claim}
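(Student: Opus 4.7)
The plan is to essentially mirror the proof of the analogous result for trees (\thref{trees claim}), since the vector structure of $A$ in the star case is quite parallel. First, I would observe that any $\p \in \vspan(A)$ decomposes uniquely as $\p = \p_{\q_0}^{0} + \sum_{\ell=1}^{k} \p_{\q_\ell}^{\ell}$, where $\q_0 \in W^r_{G,c}$ and $\q_\ell \in W^{r-1}_{G,c}$ for each $\ell \in [k]$. This follows from the linear independence arguments already spelled out in the proof of \thref{claim:stars A ind}: the vectors in $A_\ell$ for $\ell \in [k]$ are supported on $G_\ell$ with prescribed restriction $(x-\alpha_\ell)\q$, while the vectors in $A_0$ contribute $\q$ on $G_0$.

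I would then split into two cases. If $\q_0 \not\equiv 0$, choose $v \in V(G)$ with $(q_0)_v \not\equiv 0$ (writing $\q_0 = ((q_0)_u)_{u \in V(G)}$). Since each $\p_{\q_\ell}^{\ell}$ for $\ell \ge 1$ vanishes on $G_0$, the restriction of $\p$ to the coordinate $v_0$ is precisely $(q_0)_v$. Because $\q_0 \in W^r_{G,c}$, \thref{W} yields
\[\deg((q_0)_v) \le \min\{r,\deg_G(v)\}-1 \le \deg_G(v)-1,\]
so \thref{zeros lemma}, applied with empty extra factor set $A = \emptyset$, produces a $\z \in Z_c$ with $\p(\z) \ne 0$.

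If instead $\q_0 \equiv 0$, then $\p \ne 0$ forces $\q_\ell \not\equiv 0$ for some $\ell \in [k]$; fix any such $\ell$ and pick $v$ with $(q_\ell)_v \not\equiv 0$. The restriction of $\p$ to the coordinate $v_\ell$ is now exactly $(q_\ell)_v(x - \alpha_\ell)$, with $\deg((q_\ell)_v) \le \min\{r-1,\deg_G(v)\}-1 \le \deg_G(v)-1$ by \thref{W}, and with $\alpha_\ell \notin c(E)$ by construction of $c^\prime$. Applying \thref{zeros lemma} with $A = \{\alpha_\ell\}$ then yields the required $\z$. There is no real obstacle here: once the decomposition is observed, both cases match the hypotheses of \thref{zeros lemma} exactly as in the tree argument, and the degree bounds needed to invoke the lemma follow directly from membership of the $\q_\ell$'s in their respective spaces $W^r_{G,c}$ and $W^{r-1}_{G,c}$.
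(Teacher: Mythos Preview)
Your proposal is correct and essentially identical to the paper's proof: both identify a copy $G_j$ on which $\p$ is non-zero (the paper via the smallest index $j$, you via the case split on whether $\q_0\equiv 0$), observe that the restriction there has the form $q_v$ or $q_v(x-\alpha_j)$ with $\deg(q_v)\le \deg_G(v)-1$, and then invoke \thref{zeros lemma}. The only cosmetic difference is that you phrase the setup as an explicit decomposition $\p=\p_{\q_0}^0+\sum_\ell \p_{\q_\ell}^\ell$, whereas the paper works directly with the coordinate $p_{v_j}$.
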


\noindent\emph{Proof of Claim:} Let $\p= (p_u)_{u\in V(G\square S_k)}$ be a non-zero vector in $\vspan(A)$. Let $j$ be the smallest index for which there exists some $v_j\in V(G_j)$ with $p_{v_j}\not\equiv 0$. If $j=0$, then by definition of the vectors in $A$, we can write $p_{v_0}= q_v$ for some $\q=(q_u)_{u\in V(G)}$ in $ \vspan(B^{(r)})=W^{r}_{G,c}$. Otherwise, if $j\in [k]$, then by definition of the vectors in $A$ and the minimality of $j$, we can write $p_{v_j}= q_v(x-\alpha_j)$ for some $\q=(q_u)_{u\in V(G)}$ in $ \vspan(B^{(r-1)})=W^{r-1}_{G,c}$. In any case, $\deg(q_v)\leq \deg_G(v)-1$ by \thref{W}. Therefore, as the colours in $\{\alpha_1,...,\alpha_k\}$ were chosen to be distinct from those in $c(E)$, by \thref{zeros lemma} there exists $\z\in Z_c$ such that $\p(\z)\neq 0$.$_{(\square)}$

Thus, to finish the proof, it suffices to find a set $X$ of $\sum_{t=1}^{k-1}t d^G_{r-t} + k \sum_{t=k}^r d^G_{r-t}$ linearly independent vectors in $W^r_{G\square S_k, c^\prime}$ that evaluate to zero on $Z_c$. This, together with \thref{stars claim}, will guarantee that these vectors are independent from those in $A$. 

First, for each $v\in V(G)$ with $\deg_G(v)\leq r-k$ and each $\ell\in [k]$, define the vector $\p_v^\ell = (p_u)_{u\in V(G\square S_k)}$ as follows: 
\begin{itemize}[nosep]
    \item Let $p_{v_0}$ be the unique polynomial of degree $\deg(v_0)-1$ that is equal to $1$ at $c'(v_0v_\ell)$ and equal to zero on all $\deg(v_0)-1$ other colours of the edges incident with $v_0$ in $G\square S_k$. That is, $p_{v_0}=\beta_0\prod_{u\in N(v_0)\setminus\{v_\ell\}}(x-c'_{v_0u})$, where $\beta_0$ is a non-zero constant chosen so that $p_{v_0}(c'_{v_0v_\ell})=1$. 
    \item Let $p_{v_\ell}$ be the unique polynomial of degree $\deg(v_\ell)-1$ that is equal to $1$ at $c'(v_0v_\ell)$ and equal to zero on all $\deg(v_\ell)-1$ other colours of the edges incident with $v_\ell$ in $G\square S_k$. That is, $p_{v_\ell}=\beta_\ell\prod_{u\in N(v_\ell)\setminus\{v_0\}}(x-c'_{v_\ell u})$, where $\beta_\ell$ is a non-zero constant chosen so that $p_{v_\ell}(c'_{v_0v_\ell})=1$.
    \item Let $p_u\equiv 0$ for all $u\in V(G\square S_k)\setminus \{v_0,v_\ell\}$.
\end{itemize}   
Since $\deg_G(v)\leq r-k$, we have $\deg(p_{v_0}) = \deg(v_0)-1 = \deg_G(v) + k-1\leq r-1$. 
Moreover, $k\geq 1$ implies that $\deg(p_{v_\ell}) = \deg(v_\ell)-1 = \deg_G(v)\leq r-1$. 
Since $p_u(c'_{uv})=p_v(c'_{uv})$ for all $uv\in E(G\square S_k)$, the conditions in \thref{W} are satisfied, and thus $\p_v^\ell\in W^r_{G\square S_k,c^\prime}$. Moreover, $\p_v^\ell$ evaluates to $0$ on all of $Z_c$. Let $X_0=\{\p_v^\ell: \deg_G(v)\leq r-k \text{ and }\ell\in[k]\}$. Note that $|X_0|=k\sum_{t=k}^r d_{r-t}^G$.

Now, fix $t\in [k-1]$. For each $\ell\in [t]$ and each $v\in V(G)$ with $\deg_G(v)= r-t$, define $\p_v^\ell = (p_u)_{u\in V(G\square S_k)}$ as follows: Let $p_u\equiv 0$ for all $u\notin \{v_0,v_\ell\}\cup\{v_{t+1},...,v_k\}$. Let
\[p_{v_0} = \prod\limits_{\substack{i\in [t]\\i\neq \ell}}(x-\alpha_i) \prod\limits_{u\in N_G(v)}(x-c_{uv}) \ \ \ \text{and}\ \ \ p_{v_j} = \gamma_j \prod\limits_{u\in N_G(v)}(x-c_{uv})\]
\noindent for all $j\in \{\ell\}\cup\{t+1,...,k\}$, where $\gamma_j$ is a constant chosen so that $p_{v_j}(\alpha_j)=p_{v_0}(\alpha_j)$. Note that $\gamma_j\neq 0$ since $p_{v_0}(\alpha_j)\neq 0$. Now, $\deg_G(v)=r-t$ implies $\deg(p_{v_0}) = \deg_G(v) + t-1 = r-1$. Moreover,
$\deg(p_{v_0}) = \deg_G(v) + t-1 = \deg(v_0) - k+t-1< \deg(v_0)-1$ as $t\leq k-1$.
Hence $\deg(p_{v_0})\leq \min\{r,\deg(v_0)\}-1$. Furthermore, for $j\in \{\ell\}\cup\{t+1,...,k\}$, we have $\deg(p_{v_j}) = \deg_G(v) \leq \min\{r-1,\deg(v_j)-1\} =\min\{r,\deg(v_j)\}-1$ as $\deg_G(v)=r-t\leq r-1$ and $\deg_G(v)=\deg(v_j)-1$.
Since $p_u(c'_{uv})=p_v(c'_{uv})$ for all $uv\in E(G\square S_k)$, the conditions in \thref{W} are satisfied, and thus $\p_v^\ell\in W^r_{G\square S_k,c^\prime}$. Moreover, $\p_v^\ell$ evaluates to $0$ on all of $Z_c$. For each $t\in [k-1]$, let $X_t = \{\p_v^\ell: \deg_G(v)= r-t \text{ and }\ell\in[t]\}$. Note that $|X_t|=td_{r-t}^G$. 

Define $X:=\bigcup_{i=0}^{k-1} X_i$. Hence $|X|=\sum_{i=0}^{k-1} |X_i| = k \sum_{t=k}^r d^G_{r-t}+ \sum_{t=1}^{k-1}t d^G_{r-t}$.  

\begin{claim}
\thlabel{claim:stars X ind}
    The vectors in $X$ are linearly independent.
\end{claim}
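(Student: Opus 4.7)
The plan is to mirror the structure of the proof of \thref{claim:trees X ind}. The key structural observation is that every vector in $X$ is supported only on coordinates of the form $v_0, v_1, \ldots, v_k$ for a \emph{single} vertex $v \in V(G)$. Consequently, vectors attached to different vertices of $G$ have disjoint supports, and linear independence of $X$ reduces to proving that, for each fixed $v \in V(G)$, the sub-collection $X_v \subseteq X$ consisting of vectors supported on copies of $v$ is linearly independent.

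Fix $v \in V(G)$ and let $d := \deg_G(v)$. Since the conditions defining $X_0$ and the $X_t$ (for $t \in [k-1]$) place disjoint constraints on $d$, exactly one of the following cases arises: (i) $d \geq r$, so $X_v = \emptyset$ and there is nothing to check; (ii) $d = r - t$ for some $t \in [k-1]$, so $X_v = \{\p_v^1, \ldots, \p_v^t\}$ (from $X_t$); or (iii) $d \leq r - k$, so $X_v = \{\p_v^1, \ldots, \p_v^k\}$ (from $X_0$). In case $t=1$ of (ii) we have $|X_v| = 1$ and independence is immediate.

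In the remaining subcases, the crucial point is that by construction the support of $\p_v^\ell$ always contains $v_\ell$ but \emph{excludes} $v_{\ell'}$ for every other $\ell'$ in the relevant index range. In case (ii) the support of $\p_v^\ell$ is $\{v_0, v_\ell\} \cup \{v_{t+1}, \ldots, v_k\}$, so among $\{v_1, \ldots, v_t\}$ only $v_\ell$ is hit; in case (iii) the support is just $\{v_0, v_\ell\}$, so among $\{v_1, \ldots, v_k\}$ only $v_\ell$ is hit. Restricting any alleged dependence $\sum_\ell \gamma_\ell \p_v^\ell = 0$ to the $v_\ell$-coordinate then collapses to $\gamma_\ell \cdot p_{v_\ell} \equiv 0$, from which I would conclude $\gamma_\ell = 0$ for every $\ell$, since $p_{v_\ell}$ was designed to take a prescribed non-zero value at $\alpha_\ell$ and is therefore a non-zero polynomial.

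I do not expect a serious obstacle here; the argument is structural rather than computational. The only point that requires care is confirming that each $p_{v_\ell}$ extracted from $\p_v^\ell$ is indeed non-zero: in case (iii) this follows from the non-vanishing of the interpolation constant $\beta_\ell$ (forced by $p_{v_\ell}(c'_{v_0 v_\ell}) = 1$), and in case (ii) from the fact that $\gamma_\ell \prod_{u \in N_G(v)}(x - c_{uv})$ is non-zero because $\gamma_\ell$ is chosen so that $p_{v_\ell}(\alpha_\ell) = p_{v_0}(\alpha_\ell) \neq 0$ (the product $\prod_u (\alpha_\ell - c_{uv})$ is non-zero as the $\alpha_i$ lie outside $c(E)$). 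Once this is recorded, the coordinate-by-coordinate inspection closes the proof.
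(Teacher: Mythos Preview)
Your proposal is correct and follows essentially the same approach as the paper: both arguments rest on the observation that each vector $\p_v^\ell \in X$ has a ``private'' coordinate $v_\ell$ on which it is the unique vector in $X$ with a non-zero entry, so restricting any putative dependence to that coordinate forces the corresponding coefficient to vanish. The paper states this slightly more directly (without first grouping into the sets $X_v$), but the content is identical.
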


\noindent\emph{Proof of \thref{claim:stars X ind}:} Suppose for a contradiction that there is a linear combination $S=\sum_{\p\in X} \beta_\p \p = 0$, where $\beta_\p\neq0$ for some $\p\in X$. Note that, for each $v\in V(G)$ with $\deg_G(v)\leq r-k$ and each $\ell\in [k]$, we have $p_{v_\ell}\not\equiv 0$ for the vector $\p_v^\ell\in X_0\subseteq X$, but $p_{v_\ell}\equiv 0$ for all other vectors $\p\in X$. Similarly, fixing $t\in [k-1]$, for each $v\in V(G)$ with $\deg_G(v)=r-t$ and each $\ell\in [t]$, we have $p_{v_\ell}\not\equiv 0$ for the vector $\p_v^\ell\in X_t\subseteq X$, but $p_{v_\ell}\equiv 0$ for all other vectors $\p\in X$. Therefore, for each vector $\p\in X$, there exists a coordinate $p_u$ which is non-zero in $\p$ but identically zero in all other vectors in $X$. Thus, restricting the sum $S$ to the entry corresponding to $p_u$, we see that $\beta_\p=0$ for all $\p\in X$, a contradiction.~$_{(\square)}$ 

Let $Y=A\cup X$. Therefore, by \thref{claim:stars A ind,stars claim,claim:stars X ind}, we have found 
\[|Y|=|A|+|X|=\dim\left(W^r_{G,c}\right) + k \dim\left(W^{r-1}_{G,c}\right) + \sum_{t=1}^{k-1}t d^G_{r-t} + k \sum_{t=k}^r d^G_{r-t}\] linearly independent vectors in $W^r_{G\square S_k,c^\prime}$, as required.
\end{proof}

\end{document}